\newtheorem{theorem}{Theorem}[section]
\newtheorem{proposition}[theorem]{Proposition}
\newtheorem{lemma}[theorem]{Lemma}
\numberwithin{equation}{section}
\theoremstyle{definition}
\newenvironment{example}
  {\pushQED{\qed}\examplex}
  {\popQED\endexamplex}
\theoremstyle{remark}
\newtheorem{remark}[theorem]{Remark}
\newtheorem{remarks}[theorem]{Remarks}
\newtheorem*{remark*}{Remark}
\newcommand{\1}[1]{{\mathbbm{1}\mkern -1.5mu}{\{#1\}}}
\newcommand{\R}{{\mathbb R}}
\newcommand{\N}{{\mathbb N}}
\newcommand{\ZP}{{\mathbb Z}_+}
\newcommand{\RP}{{\mathbb R}_+}
\newcommand{\RPRd}{{\mathbb R}_+\! \times \R^d}
\newcommand{\Sp}[1]{{\mathbb S}^{#1}}
\DeclareMathOperator{\Exp}{\mathbb{E}}
\renewcommand{\Pr}{{\mathbb P}}
\DeclareMathOperator{\Int}{int}
\DeclareMathOperator{\trace}{tr}
\newcommand{\tra}{{\scalebox{0.6}{$\top$}}}
\newcommand{\eps}{\varepsilon}
\newcommand{\ud}{{\mathrm d}}
\newcommand{\cD}{{\mathcal D}}
\newcommand{\cE}{{\mathcal E}}
\newcommand{\cF}{{\mathcal F}}
\newcommand{\cM}{{\mathcal M}}
\newcommand{\cS}{{\mathcal S}}
\newcommand{\cT}{{\mathcal T}}
\newcommand{\as}{\ \text{a.s.}}
\newcommand{\bigmid}{\; \bigl| \;}
\newcommand{\Bigmid}{\; \Bigl| \;}
\newcommand{\biggmid}{\; \biggl| \;}
\newcommand{\taue}{{\tau_\mathcal{E}}}
\newcommand{\pcD}{\partial\cD}
\newcommand{\barC}{\overline{C}}
\newcommand{\barcD}{\overline{\cD}}
\def\namedlabel#1#2{\begingroup  
    (#2)%
    \def\@currentlabel{#2}%
    \phantomsection\label{#1}\endgroup
}
\newlist{myenumi}{enumerate}{10}
\setlist[myenumi]{leftmargin=0pt, labelindent=\parindent, listparindent=\parindent, labelwidth=0pt, itemindent=!, itemsep=1pt, parsep=4pt}
\newlist{thmenumi}{enumerate}{10}
\setlist[thmenumi]{leftmargin=0pt, labelindent=\parindent, listparindent=\parindent, labelwidth=0pt, itemindent=!}
\begin{document}

\title{Reflecting Brownian motion in generalized parabolic domains: explosion and superdiffusivity}
\author{Mikhail V.\ Menshikov\footnote{Durham University} \and Aleksandar Mijatovi\'c\footnote{University of Warwick and the Alan Turing Institute} \and Andrew R.\ Wade\footnotemark[1]}

\maketitle

\begin{abstract}
For a multidimensional driftless diffusion in an unbounded, smooth, sub-linear generalized parabolic domain, with oblique reflection from the boundary,
we give natural conditions under which either explosion occurs, if the domain narrows sufficiently fast at infinity, or else
there is superdiffusive transience, which we quantify with a strong law of large numbers. For example, in the case of a planar domain, explosion occurs if and only if the area of the domain is finite. 
We develop and apply novel semimartingale criteria for studying explosions and establishing strong laws, which are of independent interest.
\end{abstract}

\medskip

\noindent
{\em Key words:}  
Reflected diffusion; 
oblique reflection; 
horn-shaped domain; 
explosion;
transience;
semimartingale criteria;
law of large numbers; 
anomalous diffusion.

\medskip

\noindent
{\em AMS 2020 Subject Classification:} 60J60 (Primary) 60J55, 60J65, 60F15, 60K50 (Secondary).

\section{Introduction}
\label{sec:intro}

We study the asymptotic behaviour of a multidimensional diffusion in an unbounded, generalized parabolic domain, with oblique reflection from the boundary.
The oblique reflection is such that the diffusion is transient, and the main phenomena we explore here are (i) explosion
(meaning that the process `reaches infinity' in finite time) 
if the domain narrows sufficiently fast at infinity, versus (ii) superdiffusivity, if explosion is absent but the domain grows sub-linearly.
We identify the sharp phase transition between (i) and~(ii) in terms of the growth rate of the boundary,
and quantify~(ii) via a strong law of large numbers. 
Our model can be viewed as a 
stochastic process with constraints exhibiting \emph{anomalous diffusion}.
We emphasize that the phenomena
we exhibit here are present even for the case of reflecting Brownian motion, although we do treat more general diffusions with no interior drift.

Reflecting diffusions are fundamental stochastic processes, motivated from kinetic theory of gases, queueing, communication or inventory theory, and, more recently, financial models: see end of Section~\ref{sec:diffusion} below for a brief discussion.
A large literature studies reflecting diffusions in bounded domains (see e.g.~\cite{sv,ls,lr,kr}).
In unbounded domains, if the  interior drift is constant, then the most
subtle case is when the drift is zero. Domains that are orthants or cones are classical (see e.g.~\cite{fr,vw,williams,mw}), and typically
behaviour is diffusive, even in the transient case.
Generalized parabolic domains\footnote{Pinsky~\cite[p.~677]{pinsky} uses the term \emph{horn-shaped}, which has several distinct uses in the literature.}
were considered by Pinsky~\cite{pinsky}
in the case of normal reflection and canonical covariances in the interior; 
in that case
there can be no explosion, the planar case is always recurrent. It is expected that in the case of normal reflection,  transience, present in higher dimensions, is diffusive.  In a discrete setting~\cite{mmw}, we studied planar generalized parabolic domains with normal reflection 
(more generally, \emph{opposed reflection} where reflection angles from the upper and lower boundaries are equal and opposite),
and general covariance matrices in the interior, but again any transient behaviour is expected to be diffusive. 
Thus to seek anomalous diffusion we are led to considering oblique reflections in domains of sub-linear growth, so that
the reflection is both frequent and sufficiently strong to drive the superdiffusive escape.  
The present paper is
to the best of our knowledge
the first work on sharp quantification of transience for reflecting diffusions in unbounded domains, and the first to
exhibit explosion of Brownian motion in this context.

We describe informally a special case of the model that this paper studies, to provide a sketch of the main phenomena
and to motivate the formal (and more general) definitions that we defer till Section~\ref{sec:diffusion} below.
Let $\cD$ be a domain in $\R^2$ defined by $\cD = \{ (x,y) : x \in \RP, |y| \leq b(x) \}$,
where $b : \RP \to \RP$ is a smooth function with $b(x) >0$ for $x >0$. The full range of phenomena 
are seen already in the case where $b(x) = a x^\beta$ for $x \geq x_0 >0$, say, where $a>0$ and $\beta \in \R$.
Informally, the evolution of $Z_t \in \cD$ is described by the 
stochastic differential equation (SDE)
\begin{equation}
    \label{eq:informal-SDE}
    \ud Z_t = \phi (Z_t ) \ud L_t + \Sigma^{1/2} ( Z_t ) \ud W_t, ~\text{for} ~ 0 \leq t < \taue ,
\end{equation}
where $\taue \in(0,\infty]$ is a potential explosion time,
$W$ is a planar Brownian motion, 
$\Sigma^{1/2}$ is a square root of a bounded covariance matrix $\Sigma$, and $\phi$ is a smooth, bounded vector field on $\pcD$, which governs the oblique reflection
through $L$, the local time of $Z$ on $\pcD$. 
We permit $\Sigma = \Sigma (z)$ to vary smoothly with $z \in \cD$,
but we assume that $e_y^\tra \Sigma (z) e_y \to \sigma^2 \in (0,\infty)$ for $z = (x,y)$ with $x \to \infty$,
where $e_y$ is the unit vector in the vertical direction (our more general assumptions below give a more general meaning to $\sigma^2$).
As an example of $\phi$, we may take reflection
at angle $\alpha > 0$ relative to the inwards pointing normal vector, where positive $\alpha$ means that
the angle is in the direction of increasing horizontal coordinate: see Figure~\ref{fig:picture}. 

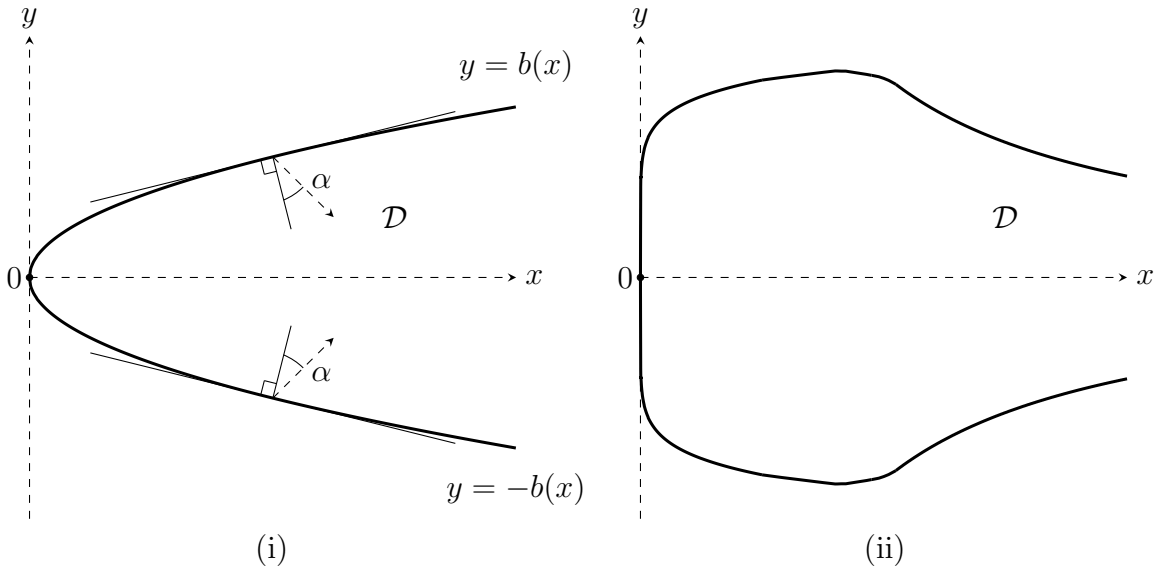
\begin{figure}
\begin{center}
\begin{tikzpicture}[domain=0:8, scale = 0.8]
\filldraw (0,0) circle (1.5pt);
\node at (-0.25,0) {$0$};
\draw[black, line width = 0.40mm]   plot[smooth,domain=0:8,samples=500] ({\x},  {(\x)^(1/2)});
\draw[black, line width = 0.40mm]   plot[smooth,domain=0:8,samples=500] ({\x},  {-(\x)^(1/2)});
\node at (6,1.0) {$\cD$};
\draw[black,->,>=stealth,dashed] (0,0) -- (8,0);
\node at (8.3, 0)       {$x$};
\draw[black,->,>=stealth,dashed] (0,-4) -- (0,4);
\node at (0,4.3)       {$y$};
\draw (4,2) -- (7,2.75);
\draw (1,1.25) -- (4,2);
\draw (4,2) -- (4.3,0.8);
\draw (3.8,1.95) -- (3.86,1.71);
\draw (4.06,1.76) -- (3.86,1.71);
\draw (4.5,1.5) arc (-45:-67.5:1);
\node at (8, 3.5)       {$y = b(x)$};
\node at (8, -3.5)      {$y = - b(x)$};
\draw (1,-1.25) -- (4,-2);
\draw (4,-2) -- (4.3,-0.8);
\draw (4,-2) -- (7,-2.75);
\draw (4.5,-1.5) arc (45:67.5:1);
\draw (3.8,-1.95) -- (3.86,-1.71);
\draw (4.06,-1.76) -- (3.86,-1.71);
\draw[black,->,>=stealth,dashed] (4,-2) -- (5,-1);
\draw[black,->,>=stealth,dashed] (4,2) -- (5,1);
\node at (4.8, 1.6)       {$\alpha$};
\node at (4.8, -1.6)       {$\alpha$};
\end{tikzpicture}
\begin{tikzpicture}[domain=0:8, scale = 0.8]
\filldraw (0,0) circle (1.5pt);
\node at (-0.25,0) {$0$};
\draw[black, line width = 0.40mm] plot[smooth,domain=0:2,samples=500] ({\x},  {3*(\x)^(1/8)});
\draw[black, line width = 0.40mm] plot[smooth,samples=200, tension=1] coordinates {(2,3.271523) (3,3.4) (3.3,3.42) (3.5,3.4) (3.8,3.35) };
\draw[black, line width = 0.40mm] plot[smooth,samples=200, tension=1] coordinates {(3.8,3.35) (4.0,3.3) (4.2, 3.2)};
\draw[black, line width = 0.40mm]   plot[smooth,domain=4.2:8,samples=500] ({\x},  {13.44*(\x)^(-1)});
\draw[black, line width = 0.40mm] plot[smooth,domain=0:2,samples=500] ({\x},  {-3*(\x)^(1/8)});
\draw[black, line width = 0.40mm] plot[smooth,samples=200, tension=1] coordinates {(2,-3.271523) (3,-3.4) (3.3,-3.42) (3.5,-3.4) (3.8,-3.35) };
\draw[black, line width = 0.40mm] plot[smooth,samples=200, tension=1] coordinates {(3.8,-3.35) (4.0,-3.3) (4.2, -3.2)};
\draw[black, line width = 0.40mm]   plot[smooth,domain=4.2:8,samples=500] ({\x},  {-13.44*(\x)^(-1)});
\node at (6,1.0) {$\cD$};
\draw[black,->,>=stealth,dashed] (0,0) -- (8,0);
\node at (8.3, 0)       {$x$};
\draw[black,->,>=stealth,dashed] (0,-4) -- (0,4);
\node at (0,4.3)       {$y$};
\end{tikzpicture}
\qquad\qquad (i)  \qquad\qquad\qquad\qquad\qquad\qquad\qquad\qquad\qquad (ii)
\end{center}
\vspace{-4mm}
\caption{\label{fig:picture} Two planar generalized parabolic domains. Case~(i) has $b(x) = a x^{1/8}$ for large~$x$, while (ii),
  a generalized \emph{reciprocal} parabolic domain, 
has $b(x)= a x^{-1}$ for large~$x$.
We give conditions under which the reflecting diffusion in (i) satisfies a law of large numbers with growth
rate $t^{8/9}$, while (ii) yields exponential growth. If domain (ii) narrows a little faster, so that $\int^\infty_0 b(x) \ud x < \infty$, there is explosion: the diffusion is driven to infinity in finite time.}
\end{figure}

The fact that $\alpha >0$ and $\lim_{x \to \infty} b'(x) = 0$ means that the process accumulates an effective positive drift in the horizontal direction whenever it visits the boundary.
The narrower the domain, the more often the process visits the boundary. For example, in the case $\beta=0$ one has
a strip-like domain, and it is natural to expect that the process is transient to the right with a positive speed (ballisticity):
a formal statement is a special case of our results below (see~Example \ref{ex:lln}). If $\beta <0$ then drift accumulates faster, so transience is
super-linear. In fact, in very narrow domains acceleration is so rapid that explosion can occur: indeed, explosion occurs if and only if $\beta < -1$.
On the other hand, for $\beta = 1$ we are in the classical situation of reflecting diffusion in a \emph{wedge},
and here both recurrence and transience are possible~\cite{vw,williams}. 
When $|\beta| <1$ we quantify the rate of escape via a superdiffusive law of large numbers.

To give some intuition behind our main result (Theorem~\ref{thm:lln} below)
and to illustrate a little more concretely how the main phenomena
that we investigate arise, we describe an heuristic comparison with reflecting Brownian motion in an interval.
For simplicity of the following heuristic discussion, we remain in the planar case displayed in  Figure~\ref{fig:picture}, take $\Sigma$ to be the identity (so the process is Brownian motion in the interior of $\cD$), and we write $c_0 = \cos \alpha$ and $s_0 = \sin \alpha$; more general definitions of $c_0, s_0$ come later.

We try to estimate how much effective horizontal drift the process accumulates via reflections.
Suppose the process is at horizontal position~$x$.
Over short time-scales, 
imagine we may approximate the behaviour of the vertical coordinate by a diffusion on the interval $[-b(x),b(x)]$.
This diffusion has zero drift and infinitesimal variance about~$\sigma^2$.
The (vertical) reflection is effectively of magnitude~$c_0$. After a transformation, this is
equivalent to unit-magnitude reflection for Brownian motion on $[-b(x)/c_0,b(x)/c_0]$ with variance $\sigma^2/c_0^2$.
By an heuristic renewal argument similar to~\cite[pp.~679--680]{pinsky}, this process should accrue boundary local time on average at rate about $\frac{\sigma^2}{2c_0 b(x)}$.
 This manifests in the two-dimensional process as an effective drift in the horizontal direction of roughly $\frac{s_0 \sigma^2 }{2c_0 b(x)}$.
Consequently, one can imagine that the large-scale behaviour of the horizontal coordinate $X_t$ of the reflected Brownian motion $Z_t$ in $\cD$ is by~\eqref{eq:informal-SDE} well-approximated by the SDE
\begin{equation}
\label{eq:X-approximation}
\ud X_t = \frac{s_0 \sigma^2 }{2c_0 b(X_t)} \ud t + \ud \widetilde{W}_t , ~\text{for} ~ 0 \leq t < \taue , \end{equation}
where $\widetilde{W}$ is one-dimensional Brownian motion. 
We take $\beta < 1$, so, roughly speaking, the drift dominates the Brownian martingale;
the discrete-time analogue is the supercritical Lamperti problem~\cite[\S 3.12]{mpw}.
Thus, ignoring the diffusion term in~\eqref{eq:X-approximation} and integrating the resulting ODE, 
one arrives at
$B(X_t) \approx \frac{s_0 \sigma^2 }{2c_0} t$ for $t < \taue$,
where $B(x) := \int_0^x b(y) \ud y$. 
The role of $B(\infty) := \lim_{x \to \infty} B(x)$
now becomes apparent.
Indeed, considering $t = \sigma_r := \inf \{ t \in \RP : X_t \geq r \}$
we get $\sigma_r \approx  \frac{2 c_0}{s_0 \sigma^2 } B(r)$. Explosion, i.e., finiteness of $\taue := \lim_{r \to \infty} \sigma_r$, is thus linked to finiteness of $B(\infty)$.
A natural conjecture on the basis of this heuristic is that
\begin{itemize}
    \item[(i)] explosion occurs if and only if $B(\infty) < \infty$; and
    \item[(ii)] if $B(\infty) = \infty$, we have the strong law $\lim_{t\to\infty} t^{-1} B(X_t) = \frac{s_0 \sigma^2 }{2c_0}$, a.s.
\end{itemize}
  Thus $\beta = -1$ is the critical boundary exponent. Somewhat more formally, one could arrive at the same conjecture~(i) by applying the Feller explosion test to the SDE~\eqref{eq:X-approximation}~\cite[Cor.~4.4, p.~82]{ce}. However, all of this reasoning is based on a one-dimensional approximation, and is a long way from a rigorous proof; our proofs are based instead on some martingale ideas that we anticipate
will have wider applicability. These martingale ideas
are the focus of Section~\ref{sec:martingales} below.

The main contribution of the paper is to formulate and establish precise versions of (i) and (ii) for domains in $\RPRd$, generalizing the type described above, 
and oblique reflections with positive components in the axial ($\RP$) direction. 
Note that in the planar case, the criterion $B(\infty) < \infty$ is equivalent to finite area of the domain, while in higher dimensions, finiteness of the volume is a strictly stronger condition guaranteeing explosion.
We give the detailed formulation of the model and the main result (Theorem~\ref{thm:lln}) in Section~\ref{sec:diffusion}.

\section{Model and main results}
\label{sec:diffusion}

Write $\| \, \cdot \, \|_d$ for the Euclidean norm on $\R^d$, $d \in \N := \{1,2,\ldots\}$. Denote the unite sphere in $\R^d$ by $\Sp{d-1} := \{ u \in \R^{d} : \| u \|_{d} = 1\}$.
Define $\RP:=[0,\infty)$ and let $b : \RP \to \RP$.  Fix $d \in \N$ and consider the closed domain $\cD \subseteq \R^{d+1}$ given by
\begin{equation}
 \label{eq:domain-def}   
 \cD := \bigl\{ z = (x,y) \in \RPRd : \| y \|_d \leq b (x) \bigr\} .
 \end{equation}
Write $\pcD:=\{ z = (x,y) \in \RPRd : \| y \|_d = b (x)\}$ for the boundary of $\cD$ in $\R^{d+1}$. Let $\cM^+_{d+1}$ be the set of real positive definite $(d\times d)$-matrices. We view vectors as column vectors and write
$\langle u, v \rangle = u^\tra v$
for the Euclidean inner product of $u,v \in \R^{d+1}$.

We will consider a process $Z = (Z_t)_{t \in [0,\taue)}$, with $Z_t \in \cD$ for $t < \taue$, where $\taue \in (0,\infty]$ is a possibly finite random explosion time (see Appendix~\ref{sec:construction} below for the definition of the space of trajectories of $Z$). 
The process $Z$ will be driven by a standard Brownian motion $W = (W_t)_{t \in \RP}$ on $\R^{d+1}$ and the dynamics will be specified by
an instantaneous covariance function  $\Sigma : \cD \to \cM^+_{d+1}$ with the symmetric square-root $\Sigma^{1/2}$, and a vector field 
 $\phi : \pcD \to \R^{d+1}$. Accompanying $Z$ will be  $L = (L_t)_{t \in [0,\taue)}$, where $L_t \in \RP$
 is the local time of $Z$ at~$\pcD$ up to time~$t$. The triple $(Z, L, \taue)$ will be our
 object of interest, 
 where 
\begin{equation}
\label{eq:SDE-for-Z}
 \begin{split}
 Z_t & = z + \int_0^t \Sigma^{1/2} ( Z_s) \ud W_s + \int_0^t \phi ( Z_s) \ud L_s , \\
& \qquad\qquad \text{ and } L_t = \int_0^t \1 { Z_s \in \pcD } \ud L_s , ~~~ \text{for } t \in [0,\taue).
\end{split}
  \end{equation}
In Appendix~\ref{sec:construction} below
we give a formal definition of the 
solutions to~\eqref{eq:SDE-for-Z} and 
establish existence and pathwise uniqueness under natural assumptions, 
see Theorem~\ref{thm:existence}. Some extra work is required compared to the standard literature~\cite{ls, sv} since $Z$ may explode because of its local time if it spends too much time close to the boundary $\pcD$.

Our initial assumptions on the domain $\cD$ are as follows.
\begin{description}
\item\namedlabel{ass:domain1}{$\text{D}_1$} 
Let $b$ be continuous on $\RP$, 
with $b(0) = 0$ and $b(x) >0$ for $x >0$. Suppose that $b$ is
twice continuously differentiable on $(0,\infty)$, such that 
(i) $\liminf_{x \to 0} ( b(x) b'(x)  ) > 0$,
and (ii) $\lim_{x \to 0} (b'' (x) / b'(x)^3)$ exists in $(-\infty,0]$.
\end{description}

\begin{remark}
\label{rem:b-small-x}
It follows from~\eqref{ass:domain1} that $\cD$ is a $C^2$ domain: see Lemma~\ref{lem:C2-domain} below.
In particular, the conditions~(i) and~(ii) in assumption~\eqref{ass:domain1}
ensure that $\cD$ is sufficiently smooth at the origin. 
This excludes the possibility of the diffusion being trapped 
at a boundary point (cf.~\cite{vw,williams}).
Conditions~(i) and~(ii)
are satisfied if $b(x) = a_0 x^\alpha( 1 + \eps_0(x))$ as $x \to 0$
for $a_0 \in (0,\infty)$, $\alpha \in (0,1/2]$, and $\eps_0$
 twice continuously differentiable with $\eps_0 (x) = o(1)$, $\eps'_0(x) = o(1/x)$, and $\eps''_0(x) = o(1/x^2)$ as $x \to 0$, for instance.
We emphasize, however, that the precise asymptotics of~$b$ at~$0$ are not
important: the asymptotics of~$b$ at~$\infty$ are crucial for our quantitative
result in Theorem~\ref{thm:lln} below. 
Finally, note that since $\lim_{x \to0} b(x) = 0$, (i)~implies that $\lim_{x \to 0} b' (x) =  \infty$.
\end{remark}

The next assumption applies to the covariance function~$\Sigma$.
\begin{description}
\item\namedlabel{ass:variance}{\text{C}} 
Let $\Sigma : \cD \to \cM^+_{d+1}$ be bounded, (globally) Lipschitz, and uniformly elliptic, i.e., there exists $\delta >0$ such that,
 for every $u \in \Sp{d}$ and all $z \in \cD$, we have
$u^\tra \Sigma (z) u \geq \delta$. 
\end{description}
 When we say $\Sigma$ is bounded and Lipschitz, the implicit norm is the matrix (operator) norm $\| \Sigma \|_{{\rm op}} = \sup_{u \in \Sp{d}} \|  \Sigma u \|_{d+1}$.
Since $\| \Sigma^{1/2} (z) \|^2_{\rm op} = \sup_{u \in \Sp{d}} ( u^\tra \Sigma (z) u )$,
the largest eigenvalue of $\Sigma (z)$, boundedness of $\Sigma$ implies boundedness of $\Sigma^{1/2}$.

We write a generic point $z \in \cD$ in coordinates as $z = (x,y)$, where $x \in \RP$ and $y \in \R^d$ with $\| y \|_d \leq b(x)$.
Let $e_x := (1,0) \in \Sp{d}$ denote the unit vector in the $x$-direction, 
and for $u \in \Sp{d-1}$ let $e_u := (0, u) \in \Sp{d}$.
Any $z = (x,y) \in \cD$ can be written as $z = x e_x + \| y \|_d e_{\hat y}$,
where $\hat y := y / \| y \|_d$ for $\| y \|_d > 0$. In particular,
if $z \in \pcD$ then $z = (x, \hat y b(x)) = x e_x + b(x) e_{\hat y}$.

Next we impose conditions on the boundary vector field~$\phi$. 
We write $\phi_x (u) := \phi ( x, u b(x))$ for the element of the vector field
indexed by $(e_x,e_u)$ coordinates. Let $n_x (u)$ denote the
 inwards-pointing unit normal vector to $\pcD$ at $(x, u b(x)) \in \pcD$; by~\eqref{ass:domain1},
  $n_x(u)$ is uniquely defined. We  assume that $\phi$ is smooth,  
that at each point on $\pcD$ it 
has a uniformly
positive component in the normal direction.
\begin{description}
\item\namedlabel{ass:vector-field}{\text{V}} 
Suppose that $\phi : \pcD \to \R^{d+1}$ is a $C^2$ vector field, with $\sup_{z \in \pcD} \| \phi(z) \|_{d+1} < \infty$,
and 
\begin{align}
\label{eq:orthogonal-component-strict}
\inf_{x\geq0} \inf_{u\in\Sp{d-1}} \langle \phi_x(u) , n_x (u) \rangle  > 0 .
\end{align}
\end{description}

The initial assumptions on the
domain~\eqref{ass:domain1}, instantaneous variance~\eqref{ass:variance} and vector field~\eqref{ass:vector-field}
guarantee the existence and uniqueness of the solutions of SDE~\eqref{eq:SDE-for-Z}, see Appendix~\ref{sec:construction}. 
Note that~\eqref{eq:orthogonal-component-strict} is a minimal extension to unbounded domains of the condition for existence in bounded domains~\cite{ls}. 
Our main result,
Theorem~\ref{thm:lln} below,
gives a precise description of the asymptotic behaviour of the process $Z$.
It requires the following additional assumptions on 
the function~$b$,
the instantaneous covariance function $\Sigma$, and the vector field~$\phi$.
 Define
\begin{equation}
\label{eq:beta-def}
 \beta := \limsup_{x \to \infty} \frac{x b'(x)}{b(x)}   .\end{equation}
Assumption~\eqref{ass:domain2} concerns the behaviour of~$b$ near infinity,
while~\eqref{ass:lln} introduces quantitative assumptions on~$\Sigma$ and~$\phi$.
Write `$\trace$' for the trace operator.

\begin{description}
\item\namedlabel{ass:domain2}{$\text{D}_2$} 
Suppose that $\lim_{x \to \infty} b'(x) =\lim_{x \to \infty} b''(x) = \lim_{x \to \infty} b(x)b''(x) = 0$, and that $\beta$ as defined at~\eqref{eq:beta-def} satisfies $\beta < 1$.
\item\namedlabel{ass:lln}{\text{A}} 
Suppose that there exists $\sigma^2 \in (0,\infty)$ for which
\begin{equation}
\label{eq:Sigma-limit}
 \lim_{x \to \infty} \sup_{y:(x,y) \in \cD}  \left| \trace \Sigma (x,y) - e^\tra_x \Sigma (x,y) e_x - \sigma^2 \right| = 0 .\end{equation}Finally, suppose that there exist $s_0, c_0 \in (0,\infty)$ such that
\begin{align}
\label{eq:e1-projection}
 \lim_{x \to \infty} \sup_{u \in \Sp{d-1}} \left| \langle \phi_x (u) , e_x \rangle - s_0 \right|& = 0, \\
\label{eq:z-projection}
\lim_{x \to \infty} \sup_{u \in \Sp{d-1}} \left| \langle \phi_x (u) , -e_u \rangle - c_0 \right| & = 0.
\end{align}
\end{description}
We make some comments on these assumptions in Remarks~\ref{rems:lln} below, after stating our main theorem. 
The last piece
of notation that we need is 
\begin{equation}
\label{eq:B-def}
 B(x) := \int_0^x b(u) \ud u , \text{ for } x \in \RP. \end{equation}
Since $b$ is bounded on compact intervals, and $b(x) > 0$ for $x >0$, 
$B(x) <\infty$ for all $x \in \RP$,
and $x \mapsto B(x)$
is continuous and strictly increasing.
Set $B (\infty ) : = \lim_{x \to \infty} B(x) \in (0, \infty]$.

If $t < \taue$, then in components write $Z_t = (X_t, Y_t) \in \cD$, with $X_t \in \RP$,  $\| Y_t \|_d \leq b (X_t)$.
Let $\Pr_z$ denote the law of $Z$ started from $Z_0 = z \in \cD$, and let $\Exp_z$
be the corresponding expectation; Theorem~\ref{thm:existence} shows that these
are well defined. 
Define the passage times
\begin{equation}
    \label{eq:sigma-def}
    \sigma_r := \inf \{ t \in \RP : X_t \geq r \}.
\end{equation}
Then (see Appendix~\ref{sec:construction}) $\taue := \lim_{r \to \infty} \sigma_r$. The next result states our main dichotomy: $B(\infty) < \infty$ implies explosion,
while if $B(\infty) = \infty$, there is non-explosive transience 
quantified via a superdiffusive
strong law of large numbers. 

\begin{theorem}
\label{thm:lln}
Suppose that~\eqref{ass:domain1}, \eqref{ass:domain2}, \eqref{ass:variance}, \eqref{ass:vector-field}, and~\eqref{ass:lln} hold. 
\begin{thmenumi}[label=(\roman*)]
\item[(i)] If  $B$ as defined at~\eqref{eq:B-def} satisfies $B(\infty) < \infty$,
then $\sup_{z \in \cD} \Exp_z \taue < \infty$, and
\[ \lim_{t \uparrow \taue} X_t = \lim_{t \uparrow \taue} L_t = \infty, \text{ $\Pr_z$-a.s.~for every $z \in \cD$.} \]
\item[(ii)] If $B(\infty) = \infty$, then, for every $z \in \cD$, $\Pr_z ( \taue = \infty ) =1$, and
\begin{equation}
    \label{eq:lln}
  \lim_{t \to \infty} \frac{B ( X_t )}{t}
 = \lim_{t \to \infty} \frac{B ( \| Z_t \|_{d+1} )}{t} = \frac{s_0 \sigma^2}{2c_0},  \text{ $\Pr_z$-a.s.}, \end{equation}
 and
\[ \lim_{r \to \infty} \frac{ \Exp_z \sigma_r}{B(r)} = \frac{2 c_0}{s_0 \sigma^2} .\]
 Moreover, for every $z \in \cD$,
 \begin{equation}
    \label{eq:lln-local-time}
  \lim_{t \to \infty} \frac{s_0 L_t}{X_t}
= 1 ,  \text{ $\Pr_z$-a.s.} \end{equation}
\end{thmenumi}
\end{theorem}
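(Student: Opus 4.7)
The approach is to derive Itô semimartingale decompositions for the key functionals $X_t$, $B(X_t)$, and $\|Y_t\|_d^2$, and feed them into the abstract strong law and explosion criteria from Section~\ref{sec:martingales}. Applying Itô's formula to~\eqref{eq:SDE-for-Z} gives $X_t = X_0 + M_t^X + \int_0^t \langle \phi(Z_s), e_x \rangle \ud L_s$, where $M^X$ is a local martingale with $[M^X]_t = \int_0^t e_x^\tra \Sigma(Z_s) e_x \ud s = O(t)$; an analogous identity for $B(X_t)$ with local martingale $M^B$ of quadratic variation $\int_0^t b(X_s)^2 e_x^\tra \Sigma(Z_s) e_x \ud s$, plus an Itô drift $\tfrac12 \int_0^t b'(X_s) e_x^\tra \Sigma(Z_s) e_x \ud s$ that is $o(t)$ since $b'(x) \to 0$ by~\eqref{ass:domain2}; and finally the decomposition
\[
\|Y_t\|_d^2 = \|Y_0\|_d^2 + M_t^Y + \int_0^t \bigl( \trace \Sigma(Z_s) - e_x^\tra \Sigma(Z_s) e_x \bigr) \ud s + 2 \int_0^t b(X_s) \langle \phi(Z_s), e_{\hat Y_s} \rangle \ud L_s,
\]
whose drift integrand tends to $\sigma^2$ by~\eqref{eq:Sigma-limit} and whose boundary integrand tends to $-2 c_0 b(X_s)$ by~\eqref{eq:z-projection}.

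The crucial observation is that $\|Y_t\|_d^2 \leq b(X_t)^2 = o(X_t^2)$, since~\eqref{ass:domain2} forces $b(x)/x \to 0$. Rearranging the $\|Y_t\|_d^2$-decomposition therefore identifies the asymptotic local-time rate $\int_0^t b(X_s) \ud L_s = \tfrac{\sigma^2}{2 c_0} t \bigl(1 + o(1)\bigr) + O(|M_t^Y|)$, converting boundary local time into clock time. Substituting into the decomposition of $B(X_t)$, and using~\eqref{eq:e1-projection} to replace $\langle \phi(Z_s), e_x\rangle$ by $s_0$, yields $B(X_t) = \tfrac{s_0 \sigma^2}{2 c_0} t + M_t + o(t)$, where $M$ is a local martingale with $[M]_t = o(t^2)$ thanks to $b(X_s) = o(X_s)$. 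The semimartingale SLLN from Section~\ref{sec:martingales} then delivers~\eqref{eq:lln} for $B(X_t)/t$; the companion statement for $B(\|Z_t\|_{d+1})$ follows from $\|Z_t\|_{d+1}^2 = X_t^2 + \|Y_t\|_d^2 = X_t^2(1+o(1))$ and continuity of $B$. The limit $\Exp_z \sigma_r / B(r) \to 2c_0/(s_0 \sigma^2)$ follows by optional stopping of the $B(X_t)$ decomposition at $\sigma_r$ together with a uniform-integrability bound on the martingale part; and~\eqref{eq:lln-local-time} is obtained by dividing the $X_t$-decomposition by $L_t$, noting $\langle \phi(Z_s), e_x\rangle \to s_0$ and $M_t^X = O(\sqrt{t \log \log t})$, while $L_t$ grows polynomially (commensurate with $X_t$).

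For part~(i), with $B(\infty) < \infty$, the bound $B(X_t) \leq B(\infty)$ combined with the positive asymptotic drift rate $\tfrac{s_0 \sigma^2}{2 c_0}$ in the semimartingale decomposition of $B(X_t)$ forces a finite explosion time via the abstract explosion criterion from Section~\ref{sec:martingales}, yielding $\sup_{z \in \cD} \Exp_z \taue \leq \tfrac{2 c_0}{s_0 \sigma^2} B(\infty) + O(1)$. The divergences $\lim_{t \uparrow \taue} X_t = \infty$ and $\lim_{t \uparrow \taue} L_t = \infty$ then follow from $B(X_t) \to B(\infty)$ by continuity and strict monotonicity of $B$, and from the $X_t$-decomposition together with the boundedness of $\langle \phi, e_x\rangle$ respectively. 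The main obstacle I anticipate is in converting the pointwise asymptotic conditions~\eqref{eq:Sigma-limit}--\eqref{eq:z-projection} (which are $x \to \infty$ limits) into integral asymptotics uniform in $t$: one must argue that excursions of $Z$ into moderate-$X$ regions contribute negligibly to $\int_0^t b(X_s) \ud L_s$ and to the $\ud s$ drifts, and dovetailing this with the precise quantitative hypotheses of the abstract SLLN/explosion criteria is the essential technical step.
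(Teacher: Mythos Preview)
Your approach is closely related to the paper's but misses its central device. The paper introduces a Lyapunov function $g_\gamma(x,y) = x + \gamma\|y\|_d^2/\tilde b(x)$ with a \emph{tunable} parameter~$\gamma$, and works with $B(g_\gamma(Z_t))$; since $B(g_\gamma(x,y)) \approx B(x) + \gamma\|y\|_d^2$ for large $x$, this is essentially the linear combination of your two decompositions. The decisive point is that the local-time coefficient in $\ud B(g_\gamma(Z_t))$ is asymptotically $b(X_t)(s_0 - 2\gamma c_0)$, so by choosing $\gamma$ slightly above or below the critical value $s_0/(2c_0)$ this term acquires a \emph{definite sign} and can simply be dropped, producing genuine super-/submartingale inequalities for $B(g_\gamma(Z_t)) - (\gamma\sigma^2 \pm \eps)t$ (Lemma~\ref{lem:xi-local-martingales}). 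These one-sided bounds plug directly into the abstract criteria of Section~\ref{sec:martingales}; and, crucially, the supermartingale side also furnishes the a priori estimate $\Exp_z B(g_\gamma(Z_{t\wedge\rho_r})) = O(t)$, which is then combined with $b(x)^2 \le C(1+B(x)^{2-\delta})$ (Lemma~\ref{lem:b-B-bound}) and Jensen's inequality to control the quadratic variation (Lemma~\ref{lem:xi-qv}).

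Your substitution strategy, by contrast, tries to solve for $\int_0^t b(X_s)\,\ud L_s$ from the $\|Y\|_d^2$-decomposition and insert it into the $B(X_t)$-decomposition. This yields an approximate identity whose error terms --- the residual local-time integral coming from the $o(1)$'s in~\eqref{eq:e1-projection}--\eqref{eq:z-projection}, the boundary term $\|Y_t\|_d^2 \le b(X_t)^2$, and the quadratic variations $[M^B]_t,[M^Y]_t \le C\int_0^t b(X_s)^2\,\ud s$ --- all require a priori control of $\int_0^t b(X_s)^2\,\ud s$ or of $\Exp_z B(X_t)$, which is exactly what you are trying to prove. For $\beta\in(0,1)$ the function $b$ is unbounded, so none of this is free; this circularity, rather than the moderate-$X$ excursions you flag, is the real obstacle (the excursions are handled separately by the escape-probability estimate, Proposition~\ref{prop:transience}, and the finite expected exit times of Theorem~\ref{thm:exit}). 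The fix is precisely the parametric trick: form $B(X_t)+\gamma'\|Y_t\|_d^2$ with $\gamma'>s_0/(2c_0)$ to obtain the supermartingale upper bound first, then bootstrap. Separately, for part~(i) your deduction of $\lim_{t\uparrow\taue}X_t=\infty$ from ``$B(X_t)\to B(\infty)$'' is circular: knowing $\Exp_z\taue<\infty$ does not by itself give $B(X_{\taue-})=B(\infty)$. The paper obtains this limit from a structural property of solutions to~\eqref{eq:SDE-for-Z} established in the appendix (Theorem~\ref{thm:existence}), namely that on $\{\taue<\infty\}$ the only way the process can fail to extend is by leaving every compact set.
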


\begin{remarks}
\phantomsection
\label{rems:lln}
\begin{myenumi}[label=(\alph*)]
\setlength{\itemsep}{0pt plus 1pt}
\item
If $\beta$ defined at~\eqref{eq:beta-def} is finite, then $b(x) \leq x^{\beta+o(1)}$,
as for any $\eps >0$,
\[ \int_{x_1}^x \frac{b'(u)}{b(u)} \ud u \leq  (\beta+\eps) \int_{x_1}^x \frac{1}{u} \ud u , \text{ for all } x > x_1 ,\]
for some $x_1 \in (0,\infty)$ sufficiently large, 
and hence 
\begin{equation}
\label{eq:beta-b} \limsup_{x \to \infty} \frac{\log b(x)}{\log x} \leq   \beta  .\end{equation}
\item \label{rems:lln-b}
For $\beta <1$ we have from the preceding remark that, for some $\eps >0$,
$b(x) = O(x^{1-\eps})$, so that $B(x) = O(x^{2-\eps})$.
Thus~\eqref{eq:lln} shows that $\liminf_{t \to \infty} ( X_t^{2-\eps} / t) > 0$, a.s., which means that $X_t$ is (strictly) \emph{superdiffusive}. In contrast, $B$ can grow arbitrarily slowly, meaning that $X_t$ can grow arbitrarily fast; see Example~\ref{ex:lln} for
some representative examples.
\item 
Roughly speaking, assumption~\eqref{eq:Sigma-limit} says that the contribution to the total infinitesimal variance coming from
directions orthogonal to the horizontal stabilizes to a limit, $\sigma^2$; this variance can be distributed 
in any proportion among the $d$ components, that can vary across the domain, provided the ellipticity condition is satisfied.
\item 
Since $b(x) = o(x)$, we have $\sup_{u \in \Sp{d-1}} \| n_x (u) + e_u \|_{d+1} \to 0$ as $x \to \infty$.
Assumption~\eqref{eq:z-projection} thus shows that $\inf_{x \geq x_1} \inf_{u \in \Sp{d-1}} \langle \phi_x (u) , n_x(u) \rangle \geq c_0/2$ for $x_1$ sufficiently large,
so one may replace~\eqref{eq:orthogonal-component-strict} with the apparently weaker assumption that
\begin{equation}
\label{eq:orthogonal-component}
\langle \phi_x (u) , n_x (u) \rangle > 0, \text{ for all } x \in \RP \text{ and all } u \in \Sp{d-1} .
\end{equation}
\item 
We are not aware of previous work on explosion to infinity driven by reflection;
the closest relatives we have seen in the literature are
the phenomenon of corner trapping for reflecting processes in domains with non-smooth boundaries~\cite{vw},
or explosion for diffusions on manifolds~\cite{grigoryan}.
The general existence results for diffusions with oblique reflections
given in~\cite{ls} and~\cite{sv} do not admit explosion.
Indeed, \cite{ls} deals with bounded domains. 
In~\cite{sv} the domains are defined via $\cD = \{ z \in \R^{d+1} : \psi (z) > 0 \}$ for some bounded $\psi : \R^{d+1} \to \R$ with two bounded continuous derivatives. 
If the function $b:\RP\to\RP$ tends to zero as $x\to\infty$, the second derivative of the corresponding $\psi(z)=1- (\|y\|_d/b(x))$ 
is not bounded as $x\to\infty$
as the gradient of $\psi$ must change increasingly rapidly around the cross-sectional boundary at horizontal location~$x$. This phenomenon will occur with any $\psi$ (when $b$ tends to zero), satisfying other assumptions in~\cite{sv}. Hence~\cite{sv} also excludes the narrow domains that exhibit explosion as considered here.
\end{myenumi}
\end{remarks}

Here is an example that illustrates Theorem~\ref{thm:lln}.

\begin{example}
\label{ex:lln}
Take $d\in \N$, and suppose that $b$ satisfies
\[ b(x) = a_0 x^\alpha (1+\eps_0(x)) \text{ as } x \to 0, ~\text{and}~ b(x) = a_\infty x^\beta (1+\eps_\infty (x)) \text{ as } x \to \infty,\]
where $a_0, a_\infty \in (0,\infty)$, $\alpha \in [1/2,1)$, $\beta \in (-\infty, 1)$,
and $\eps_0(x) = o(1)$, $\eps'_0(x) = o(1/x)$, $\eps_0''(x) = o(1/x^2)$ as $x \to 0$ while $\eps_\infty (x) = o(1)$, $\eps'_\infty (x) = o(1/x)$, $\eps_\infty''(x) = o(1/x^2)$ as $x \to \infty$. 
Then~\eqref{ass:domain1} holds
(see Remark~\ref{rem:b-small-x})
and so does~\eqref{ass:domain2}, with~$\beta$ coinciding with~\eqref{eq:beta-def}. Let $\Sigma (z) = v^2 I_{d+1}$ (constant), where $I_{d+1}\in \cM_{d+1}^+$ is the identity;
then~\eqref{ass:variance} holds for $\sigma^2 = dv^2$.
Take $\phi_0 (u) = n_0 (u) = e_x$ and $\phi_x (u ) = n_x (u) + s_0 e_x  -c_0 e_u$ for $x >0$, where $s_0 , c_0 >0$.
If $\alpha = \arctan ( s_0 / c_0)$, then $\phi_0(u)$ is the vector obtained by rotating $n_x(u)$
in the plane containing $0$ and $n_x(u)$ by angle $\alpha$.
Then $B(x) \sim \frac{a_\infty}{1+\beta} x^{1+\beta}$ for $\beta > -1$, and $B(x) \sim a_\infty \log x$ for $\beta = -1$,
and $B(\infty) < \infty$ for $\beta < -1$.
Theorem~\ref{thm:lln} then yields the following for all $z \in\cD$.
\begin{itemize}
\item If $\beta < -1$, then $\Exp_z \taue < \infty$, i.e., explosion occurs.
\item If $\beta = - 1$, then $\Pr_z (\taue = \infty) = 1$, and
\[  \lim_{t \to \infty} \frac{ \log X_t}{t}  = 
 \lim_{t \to \infty} \frac{ \log \| Z_t \|_{d+1}}{t}  = 
\lim_{t \to \infty} \frac{ \log L_t}{t}  = \frac{d v^2 \tan \alpha }{ 2a_\infty  } , \text{ $\Pr_z$-a.s.} \]
\item If $\beta \in (-1,1)$, then $\Pr_z (\taue = \infty) = 1$, and,
$\Pr_z$-a.s.
\[ \lim_{t \to \infty}  t^{-\frac{1}{1+\beta}} X_t    
=  
\lim_{t \to \infty}  t^{-\frac{1}{1+\beta}} \| Z_t \|_{d+1}
=  
s_0 \lim_{t \to \infty}  t^{-\frac{1}{1+\beta}} L_t =
\left( \frac{(1+\beta) d v^2 \tan \alpha}{ 2 a_\infty  } \right)^{\frac{1}{1+\beta}}.    \]
\end{itemize}
Note that if $\beta \in (0,1)$, then $X$ is sub-ballistic (i.e., has speed zero: $\lim_{t \to \infty} X_t /t =0$, a.s.),
it has positive speed when $\beta=0$, 
and for $\beta \in (-1,0)$,
$\lim_{t\to\infty} X_t /t = \infty$, a.s.
\end{example}

We now comment briefly on some motivating applications for reflecting diffusions and refer to the literature.

\emph{Ideal gas dynamics.}
Consider an ideal gas in a container. 
A gas particle moves at constant velocity until it   either hits the domain boundary, where 
it reflects, 
or collides with another particle. If the density of the gas is sufficiently low (the  Knudsen regime~\cite{knudsen}), collisions between particles
can be neglected, and changes in velocity occur only on reflection at the boundary. 
Resulting billiards models may be deterministic  (see e.g.~\cite{tabachnikov})
or stochastic (e.g.~\cite{cpsv,evans,mvw,cdcmw}), depending on the reflection rule.
By contrast, in the high-density regime, intermolecular collisions are important. Tracking the dynamics of a single particle, one now observes Brownian motion
in the domain interior. Thus our reflecting diffusions can be motivated by single-particle dynamics in high-density ideal gases.

\emph{Queueing and communication networks.}
The domain $\RP^d$ has received particular attention over many years due to its connection
with stochastic models of queueing systems, loss networks, and communication systems. For example,
if there are $d$ queues (or $d$ customer classes) the process of queue lengths can often
be described by a Markov process on $\RP^d$ with boundary reflection; different service or transmission protocols lead to different models.
While these Markovian models often have discrete state-space (e.g.~$\ZP^d$), diffusion approximation and certain limiting regimes (in particular, heavy traffic) lead
naturally to reflecting diffusions:~see e.g.~\cite{mp,rr,harrison,foschini,bd}, among many other papers.

\emph{Other motivation.}
In mathematical finance, reflecting diffusions appear both directly as pricing models (see e.g.~\cite{hhl}) 
and via their intimate relation to  diffusions with rank-dependent interactions (e.g.~\cite{ipb,pp}).
In one dimension, systems of interacting Brownian motions that mutually reflect have 
been studied in the context of the KPZ universality class~\cite{wfs} and as scaling limits
of certain discrete interacting particle systems~\cite{gs}. A recent statistical application 
of reflecting processes is set estimation~\cite{cflp}.

The outline of the rest of this paper is as follows.
Section~\ref{sec:martingales} presents some results on semimartingales, with potential explosion, that will form the basis for our analysis; these
involve martingale-type criteria for estimating escape probabilities and expected hitting times, and for analysing explosion times. Section~\ref{sec:exit} turns to the reflecting diffusion given by~\eqref{eq:SDE-for-Z}, and establishes  (Theorem~\ref{thm:exit}) that the expected time to exit any bounded set is finite, an important and non-trivial ingredient in our proofs, as our assumptions that guarantee transience hold only for large $x$. Section~\ref{sec:lln} then presents the proof of Theorem~\ref{thm:lln}, which uses a 
suitable Lyapunov function to bring the results of Section~\ref{sec:martingales} to bear on the reflecting diffusion.
Finally, Appendix~\ref{sec:construction} discusses existence and uniqueness for the SDE~\eqref{eq:SDE-for-Z}.

\section{Explosions and growth bounds for semimartingales}
\label{sec:martingales}

\subsection{Overview and notation}
\label{sec:martingales-overview}

This section develops some semimartingale
tools for 
studying the quantitative asymptotic behaviour of possibly explosive semimartingales
via suitable Lyapunov functions.

Fix a probability space $(\Omega, \cF, \Pr)$
and a complete right-continuous filtration $(\cF_t)_{t\in\RP}$.
Let $\cF_\infty := \sigma ( \cup_{t\in\RP} \cF_t )$.
Consider an $(\cF_t)$-progressively measurable process 
$\kappa = (\kappa_t)_{t\in\RP}$,
taking values in $[0,\infty]$, 
where reaching the state $\infty$ in finite time 
represents explosion. Let $\cT$ denote the set of all $[0,\infty]$-valued stopping times with respect to $(\cF_t)_{t\in\RP}$.

For any $\ell, r \in \RP$ and stopping time $T\in \cT$, define the first-entry times (after $T$)
\begin{align}
    \label{eq:lambda-rho}
    \begin{split}
\lambda_{\ell,T} & :=T+ \inf \{ s\in\RP : T<\infty,\, \kappa_{T+s} \leq \ell \},\\
\rho_{r,T}& :=T+\inf \{ s\in\RP : T<\infty, \, \kappa_{T+s} \geq r \},
\end{split}
\end{align}
where we adopt the convention $\inf \emptyset := +\infty$.
If $T=0$, we denote
$\lambda_\ell := \lambda_{\ell,0}$
and
$\rho_r := \rho_{r,0}$.
The almost sure  limits $\rho_\infty := \lim_{r \to \infty} \rho_r$
 and
$\rho_{\infty,T} := \lim_{r \to \infty} \rho_{r,T}$
exist by monotonicity. 
Note that, provided $\kappa$ is right-continuous with left limits (rcll), by~\cite[Ch.~III,~Prop.~3.3]{ry} we have 
$\lambda_{\ell, T},\rho_{r,T}\in\cT$
for all $r,\ell\in\RP$
and hence
$\rho_\infty\in \cT$
(see~\cite[p.~46]{ry}). 
On the event 
$\{\rho_\infty < \infty\}$, 
we say that 
\emph{explosion} of $\kappa$ occurs.
By definition,
we have that
\begin{equation}
\label{eq:explosion-times}
\rho_{r,T} = \rho_r , \text{ on $\{ T \leq \rho_r \}$, and hence }
\rho_{\infty,T} = \rho_\infty, \text{ on $\{T < \rho_\infty\}$.}
\end{equation}

\subsection{Escape probability}
\label{sec:escape-probability}

The next result gives a supermartingale condition for an escape probability estimate applied in the proof of Theorem~\ref{thm:lln}. The ideas behind  Theorem~\ref{thm:transience} 
in the discrete-time (thus non-explosive) case go a long way back, see~\cite[Lem.~3.5.7]{mpw} and references therein. 

\begin{theorem}
\label{thm:transience}
Suppose that $\kappa = (\kappa_t)_{t \in \RP}$ is a 
$[0,\infty]$-valued
$(\cF_t)$-adapted rcll process.
Suppose that there exist~$x_1 \in \RP$ and a bounded continuous $f : \RP \to (0,\infty)$ such that
\begin{thmenumi}[label=(\alph*)]
\item\label{item:transience-1} $\inf_{y \in [0,x]} f(y) >0$ for all $x \in \RP$, and $\lim_{y \to \infty} f(y) = 0$;
\item\label{item:transience-2} for all $T \in \cT$ and  $r\in(x_1,\infty)$, the process
$(f(\kappa_{( t+T) \wedge S} )\1{T < \rho_\infty})_{t\in\RP}$, where 
$S:=\lambda_{x_1,T} \wedge \rho_{r,T}$,
is an $(\cF_{t+T})$-supermartingale, i.e., for
$0\leq s\leq t<\infty$,
\[ \Exp \bigl[  f(\kappa_{( t+T) \wedge S} )
 \bigmid \cF_{s + T} \bigr] \leq   f(\kappa_{(s + T) \wedge S } ), \text{ on the event } \{ T < \rho_\infty \}.\]
\end{thmenumi}
Then for any $\ell \in \RP$ and any $\eps >0$, there exists $x \in(\ell,\infty)$ such that, for every~$T \in \cT$,
\begin{equation}
\label{eq:escape-probability}
\Pr ( \lambda_{\ell , T} < \rho_{\infty} \mid \cF_T ) \leq \eps,  \text{ on the event } \{ \kappa_T \geq x ,\, T < \rho_\infty \} .\end{equation}
Moreover, if~\eqref{eq:escape-probability} holds and $\Pr ( \rho_r < \rho_\infty ) = 1$ for all $r \in \RP$, 
then $\lim_{t \uparrow \rho_\infty} \kappa_t = \infty$, a.s.
\end{theorem}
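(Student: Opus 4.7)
\emph{Escape probability.} For~\eqref{eq:escape-probability}, the plan is to run a classical Lyapunov-type supermartingale argument, adapted to allow for potential explosion. Fix $\ell \in \RP$ and $\eps>0$. First I reduce to $\ell \geq x_1$: when $\ell < x_1$, reaching $[0,\ell]$ entails reaching $[0,x_1]$, so $\{\lambda_{\ell,T} < \rho_\infty\} \subseteq \{\lambda_{x_1,T} < \rho_\infty\}$, and it suffices to bound the latter. Now assume $\ell \geq x_1$. Take $T\in\cT$ and $r>\ell$, and set $S' := \lambda_{\ell,T} \wedge \rho_{r,T} \leq S := \lambda_{x_1,T} \wedge \rho_{r,T}$. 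Since the supermartingale property of hypothesis~\ref{item:transience-2} is preserved under further stopping, $(f(\kappa_{(t+T)\wedge S'}) \1{T<\rho_\infty})_{t\in\RP}$ is a bounded $(\cF_{t+T})$-supermartingale; applying $\Exp[\,\cdot\, \mid \cF_T]$ at each $t$ and sending $t\to\infty$ via Fatou gives
\begin{equation*}
\Exp\bigl[ f(\kappa_{S'}) \1{T<\rho_\infty,\, S'<\infty} \bigmid \cF_T \bigr] \leq f(\kappa_T) \1{T<\rho_\infty}.
\end{equation*}
On $\{\lambda_{\ell,T} \leq \rho_{r,T}\} \cap \{T<\rho_\infty\}$, one has $S' = \lambda_{\ell,T} < \infty$ and $\kappa_{S'} \leq \ell$ by rcll, so $f(\kappa_{S'}) \geq m_\ell := \inf_{[0,\ell]} f > 0$ by hypothesis~\ref{item:transience-1}. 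Letting $r \uparrow \infty$ and using $\rho_{\infty,T} = \rho_\infty$ on $\{T<\rho_\infty\}$ from~\eqref{eq:explosion-times}, monotone convergence yields $m_\ell \Pr(\lambda_{\ell,T} < \rho_\infty \mid \cF_T) \leq f(\kappa_T) \1{T<\rho_\infty}$. Since $f(y) \to 0$ as $y\to\infty$, any $x > \ell$ with $\sup_{y \geq x} f(y) \leq m_\ell \eps$ works in~\eqref{eq:escape-probability}.

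\emph{Convergence to infinity.} For the moreover clause, I would apply the escape bound iteratively at the hitting times $\rho_x$. Fix $\ell \in \N$; for each $n\in\N$, let $x_n = x(\ell, 1/n) > \ell$ be supplied by~\eqref{eq:escape-probability} with $\eps=1/n$. The assumption $\Pr(\rho_{x_n} < \rho_\infty) = 1$, together with $\kappa_{\rho_{x_n}} \geq x_n$ by rcll, allows~\eqref{eq:escape-probability} applied at $T=\rho_{x_n}$ to be integrated into $\Pr(\lambda_{\ell, \rho_{x_n}} < \rho_\infty) \leq 1/n$. On the complement of this event, $\kappa_t > \ell$ for all $t \in [\rho_{x_n}, \rho_\infty)$, hence $\liminf_{t\uparrow\rho_\infty} \kappa_t \geq \ell$. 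Sending $n\to\infty$ shows $\Pr(\liminf_{t\uparrow\rho_\infty}\kappa_t \geq \ell) = 1$, and intersecting over $\ell\in\N$ delivers $\liminf_{t\uparrow\rho_\infty}\kappa_t = \infty$ almost surely, which is equivalent to the claimed limit.

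\emph{Main obstacle.} The hard part will be the bookkeeping around possibly-infinite stopping times and the instant of explosion: I will need to verify that the optional-stopping step handles $\{S'=\infty\}$ cleanly (so that Fatou correctly produces an inequality involving only $\{S'<\infty\}$), and that the monotone limit $r\to\infty$ identifies $\bigcup_r \bigl(\{\lambda_{\ell,T} \leq \rho_{r,T}\} \cap \{T<\rho_\infty\}\bigr)$ with $\{\lambda_{\ell,T} < \rho_\infty\}$ up to a null set; the only discrepancy, $\lambda_{\ell,T} = \rho_\infty < \infty$, requires $\kappa$ to visit $[0,\ell]$ exactly at the explosion instant, which is incompatible with the rcll convention under which $\kappa$ approaches $\infty$ at $\rho_\infty$.
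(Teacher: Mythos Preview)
Your argument follows the same supermartingale/Fatou route as the paper's proof, and the ``moreover'' part is essentially identical. One small slip: the assertion that $S'=\lambda_{\ell,T}<\infty$ on $\{\lambda_{\ell,T}\le\rho_{r,T}\}\cap\{T<\rho_\infty\}$ fails when both stopping times are infinite (possible in the non-explosive regime), so your ``main obstacle'' misdiagnoses the edge case; the paper sidesteps this by using the \emph{strict} event $\{\lambda_{\ell,T}<\rho_{r,T}\}$, which forces $\lambda_{\ell,T}<\infty$ and makes the monotone limit $\bigcup_r\{\lambda_{\ell,T}<\rho_{r,T},\,T<\rho_\infty\}=\{\lambda_{\ell,T}<\rho_\infty,\,T<\rho_\infty\}$ immediate.
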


We remark that $x$ in~\eqref{eq:escape-probability} is chosen independently of the stopping time $T\in\cT$.

\begin{proof}[Proof of Theorem~\ref{thm:transience}.]
It suffices to prove~\eqref{eq:escape-probability} for $\ell \geq x_1$. Pick any $T \in \cT$ and $r\in(\ell,\infty)$ and note
$S\geq \lambda_{\ell,T} \wedge \rho_{r, T}\geq T$.
Moreover, on the event 
$\{T < \rho_\infty\}$, we have $S\leq \rho_\infty$.
Define the process $\zeta=(\zeta_t)_{t\in\RP}$ by
\[ \zeta_t :=
f( \kappa_{(t + T) \wedge \lambda_{\ell,T} \wedge \rho_{r, T}} )\1{T < \rho_\infty}, \text{ for $t\in\RP$.}\\
\]
By the fact that
$S\geq \lambda_{\ell,T} \wedge \rho_{r, T}$,
and hypothesis~\ref{item:transience-2},
the process
$\zeta$ is an $(\cF_{t+T})$-supermartingale stopped at $(\lambda_{\ell,T} \wedge \rho_{r, T})-T$. Thus, by~\cite[Ch.~II, Thm~3.3]{ry}, $\zeta$ is a non-negative supermartingale.
Hence,   for all $t\in\RP$,
\[ f(\kappa_T)=\zeta_0 \geq \Exp [ \zeta_{t} \mid \cF_T ] \geq \Exp [ \zeta_{t} \1 { \lambda_{\ell, T} < \rho_{r,T} } \mid \cF_T ], \text{ on } \{ T < \rho_\infty \}. \]
Since 
$\zeta_t\geq0$ for $t\in\RP$, for a sequence $t_k\uparrow\infty$, the (conditional) Fatou lemma yields
\begin{align*}
\Exp \Bigl[ \liminf_{k \to \infty} \zeta_{t_k} \1 { \lambda_{\ell, T} < \rho_{r,T} }  \Bigmid \cF_T \Bigr]
& \leq \liminf_{k \to \infty} \Exp [ \zeta_{t_k} \1 { \lambda_{\ell, T} < \rho_{r,T} } \mid \cF_T ] \leq \zeta_0 ,  \text{ on } \{ T < \rho_\infty \}.
\end{align*}
On $\{  \lambda_{\ell, T} < \rho_{r,T}, \, T < \rho_\infty \}$, 
we have
$\liminf_{k \to \infty} \zeta_{t_k} = f ( \kappa_{\lambda_{\ell,T}} ) \geq \inf_{z \in [0,\ell]} f(z)$,
since $f$ is continuous and the paths of $\kappa$ are right-continuous.
Thus, 
\[ \sup_{z \in[x,\infty)} f(z) \geq \zeta_0 \geq \Pr (  \lambda_{\ell, T} < \rho_{r,T} \mid \cF_T ) \inf_{z \in [0,\ell]} f(z) , \text{ on } \{ \kappa_T \geq x, \, T < \rho_\infty \},\]
for any $x\in(\ell,r)$. Put differently,
\[ \Pr (  \lambda_{\ell, T} < \rho_{r,T} \mid \cF_T ) \leq \frac{\sup_{z \geq x} f(z)}{\inf_{z \in [0,\ell]} f(z)}, \text{ on }  \{ \kappa_T \geq x, \, T < \rho_\infty \} .\]
On $\{T < \rho_\infty\}$,  by~\eqref{eq:explosion-times} we have
$\rho_\infty =  \lim_{r \to \infty} \rho_{r, T}$, implying
$$\cup_{r \in \N} \{ \lambda_{\ell,T} < \rho_{r,T}, T < \rho_\infty \}=\{\lambda_{\ell,T} < \rho_{\infty}, T < \rho_\infty\}.$$
By the (conditional) monotone convergence theorem, on the event 
$\{T < \rho_\infty\}$, we have
$
\lim_{r \to \infty} \Pr ( \lambda_{\ell,T} < \rho_{r,T} \mid \cF_T )
=\Pr ( \lambda_{\ell,T} < \rho_{\infty} \mid \cF_T )$.
Hence
\begin{align*} \Pr ( \lambda_{\ell,T} < \rho_{\infty} \mid \cF_T )
 \leq \frac{\sup_{z \geq x} f(z)}{\inf_{z \in [0,\ell]} f(z)}, \text{ on }  \{ \kappa_T \geq x, \, T < \rho_\infty \} ,\end{align*}
 for any $x\in (\ell,\infty)$.
Hypothesis~\ref{item:transience-1} now yields~\eqref{eq:escape-probability}.

Finally, suppose that
$\Pr ( \rho_r < \rho_\infty ) = 1$ for all $r \in \RP$
and that~\eqref{eq:escape-probability} holds. 
Pick arbitrary $\eps >0$ and  $\ell \geq x_1$.
Then apply~\eqref{eq:escape-probability}
with $T=\rho_r$, to get
\[ \Pr ( \lambda_{\ell,\rho_r} \geq \rho_\infty ) = \Exp \left[  \Pr ( \lambda_{\ell,\rho_r} \geq \rho_\infty \mid \cF_{\rho_r} ) \1 { \kappa_{\rho_r} \geq r } \right] \geq 1- \eps,\]
for some $r > \ell$ sufficiently large.
On the event $\{ \lambda_{\ell,\rho_r} \geq \rho_\infty \}$
we have $\kappa_t \geq \ell$ for all $t \in [ \rho_r , \rho_\infty)$, and
thus, since $\Pr ( \kappa_{\rho_r} \geq r ) = 1$, we have
$\Pr  ( \liminf_{t \uparrow \rho_\infty} \kappa_t \geq \ell ) \geq \Pr ( \lambda_{\ell,\rho_r} \geq \rho_\infty ) \geq 1 - \eps$. Since $\eps >0$ was arbitrary, this means that $\liminf_{t \uparrow \rho_\infty} \kappa_t \geq \ell$, a.s. Since $\ell \in[x_1,\infty)$ was arbitrary,
we conclude that $\liminf_{t \uparrow \rho_\infty} \kappa_t = \infty$, a.s.
\end{proof}

\subsection{Explosion and passage times}
\label{sec:explosion-and-passage-times}

In this section we estimate expected passage times
and establish a strong form of explosion given by $\Exp \rho_\infty < \infty$: see Theorem~\ref{thm:explosion} below.
Conditions for non-explosion, defined as $\Pr ( \rho_\infty = \infty ) = 1$, will be given in
Theorem~\ref{thm:non-explosion} below. Almost-sure behaviour in the non-explosive case is described in Theorem~\ref{thm:martingale-lln} below.

The conditions in Theorem~\ref{thm:explosion} are given in terms of a transformed, stopped,
and compensated process, 
which is defined as follows. 
Let 
\begin{equation}
\label{eq:def_f}  
f: \RP\! \to \RP \,\text{be non-decreasing and continuous, with $f(\infty) := \lim_{x \to \infty} f(x) \in [0,\infty]$.}
\end{equation}
Pick a stopping time $T \in \cT$, 
levels $\ell\in\RP$ and $r \in [0,\infty]$, satisfying $\ell<r$, and a positive parameter $\theta \in (0,\infty)$.
For any $t \in \RP$
define
\begin{align}
    \label{eq:v-def} 
v_t & := 
 \left( ( t + T ) \wedge \lambda_{\ell, T} \wedge \rho_{r, T}  - T \right)\1{T<\rho_\infty};\\
   \label{eq:zeta-def} 
 \zeta^{(f,\theta)}_t 
& := 
\left(f (\kappa_{T+v_t} ) - \theta v_t \right)\1{T < \rho_\infty}.
\end{align}
For ease of exposition, we suppress $T, \ell, r$ in  $v=(v_t)_{t\in\RP}$ and $\zeta^{(f,\theta)}=(\zeta^{(f,\theta)}_t)_{t\in\RP}$.

\begin{theorem}
\label{thm:explosion}
Suppose that $\kappa = (\kappa_t)_{t \in\RP}$ is a
$[0,\infty]$-valued
$(\cF_t)$-adapted rcll process with jumps of finite magnitude:
$|\kappa_t-\kappa_{t-}|<\infty$ for all $t\in\RP$~a.s., where $\kappa_{t-}:=\lim_{s\uparrow t}\kappa_s$
for $t>0$ and $\kappa_{0-}:=\kappa_0$. Suppose the following.
\begin{thmenumi}[label=(\alph*)]
\item\label{item:explosion-a}  For all $\ell \in \RP$ and all $\eps >0$, there exists $x > \ell$ such that~\eqref{eq:escape-probability} holds for all $T \in \cT$.
\item\label{item:explosion-b} For all $r \in \RP$, $\Pr ( \rho_r < \infty ) = 1$.
\item\label{item:explosion-c} There exists $x_1 \in \RP$ such that, for every $x\in(x_1,\infty)$,
there exists a constant $B_x \in \RP$ for which, for all $T \in \cT$, it holds that
\begin{equation}
\label{eq:bounded-escape}    
 \Exp [ \rho_{x,T} - T \mid \cF_{T} ] \leq B_x, \text{ on } \{ \kappa_T \leq x_1 , \, T < \rho_\infty \}.
 \end{equation}
\end{thmenumi}
Let $f$, 
$v$ and $\zeta^{(f,\theta)}$ be as in~\eqref{eq:def_f},~\eqref{eq:v-def},  and~\eqref{eq:zeta-def}, respectively.
Suppose also
that there exists
$\theta \in (0,\infty)$ such that for all $\ell \in[x_1,\infty)$, $r \in(\ell,\infty)$ and $T \in \cT$, 
the process $\zeta^{(f,\theta)}$
is either a supermartingale, i.e., for all  $s, t \in \RP$,  $t \geq s$,
we have
\begin{align}
    \label{eq:supermartingale-condition}
    \Exp [ \zeta^{(f,\theta)}_t \mid \cF_{s +T} ] \leq \zeta^{(f,\theta)}_s , \text{ on } \{ T  < \rho_\infty\},
\end{align}
or a submartingale, i.e. for all  $s, t \in \RP$, $t \geq s$, we have
\begin{align}
    \label{eq:submartingale-condition}
     \Exp [ \zeta^{(f,\theta)}_t \mid \cF_{s + T} ] \geq \zeta^{(f,\theta)}_s, \text{ on } \{ T  < \rho_\infty\}.
\end{align}  
Then the following statements hold.
\begin{thmenumi}[label=(\roman*)]
\item
\label{thm:explosion-i}
If $f(\infty) = \infty$ and~\eqref{eq:supermartingale-condition} holds, then for any sequence $r_n \to \infty$,
\begin{equation}
    \label{eq:hitting-time-lower-bound}
 \liminf_{n \to \infty} \frac{\Exp [\rho_{r_n} \mid \cF_0]}{\Exp[f(\kappa_{\rho_{r_n}}) \mid \cF_0]} \geq \frac{1}{\theta},\as,
 \end{equation}
 and thus $\Exp \rho_\infty = \infty$.
\item
\label{thm:explosion-ii}
Assume~\eqref{eq:submartingale-condition}.
If $f(\infty) = \infty$
and for every $r\in\RP$ there exists a constant $C_r \in\RP$ such that $\Exp [f(\kappa_{\rho_r})\mid \cF_0] \leq C_r$, a.s.,
then for any sequence $r_n \to \infty$,
\begin{align}
    \label{eq:hitting-time-upper-bound}
    \limsup_{n \to \infty} \frac{\Exp [\rho_{r_n} \mid \cF_0]}{\Exp[f(\kappa_{\rho_{r_n}}) \mid \cF_0 ]} & \leq \frac{1}{\theta}, \as
    \end{align}
If $f(\infty) < \infty$, then there exists a constant $C \in\RP$ such that $\Exp [\rho_\infty \mid \cF_0] \leq C$, a.s.    
\end{thmenumi}
\end{theorem}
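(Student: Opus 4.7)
All three conclusions start from the same step: passing the local super/sub-martingale property of $\zeta^{(f,\theta)}$ to an integrated optional-stopping form, and then iterating to replace $\lambda_{x_1,T}\wedge\rho_{r,T}$ by $\rho_{r,T}$ alone. For the integration, fix $T\in\cT$, $\ell\in[x_1,\infty)$, $r>\ell$ and write $\tau:=\lambda_{\ell,T}\wedge\rho_{r,T}$. On $\{T<\rho_\infty\}$ we have $v_t=t\wedge(\tau-T)$; hypothesis~\ref{item:explosion-b} gives $\tau\leq\rho_{r,T}<\infty$ a.s., so $v_t=\tau-T$ for all sufficiently large (random) $t$, and hence $\kappa_{T+v_t}\to\kappa_\tau$ a.s. Moreover, for $t<\tau-T$ we have $\kappa_{T+t}<r$ by definition of $\rho_r$, so $f(\kappa_{T+v_t})\leq f(r)$. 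Splitting on $\{t<\tau-T\}$ and its complement, dominated convergence gives $\Exp[f(\kappa_{T+v_t})\mid\cF_T]\to\Exp[f(\kappa_\tau)\mid\cF_T]$, while monotone convergence gives $\Exp[v_t\mid\cF_T]\to\Exp[\tau-T\mid\cF_T]$. Passing $t\to\infty$ in~\eqref{eq:supermartingale-condition} or~\eqref{eq:submartingale-condition} with $s=0$ thus yields on $\{T<\rho_\infty\}$
\[ \theta\,\Exp[\tau-T\mid\cF_T]+f(\kappa_T)\geq\Exp[f(\kappa_\tau)\mid\cF_T]\quad\text{(supermartingale case)}, \]
with the reverse inequality in the submartingale case.

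For part~\ref{thm:explosion-i}, take $T=0$ and $\ell=x_1$; since $\tau\leq\rho_r$ and $f\geq0$,
\[ \theta\,\Exp[\rho_r\mid\cF_0]+f(\kappa_0)\geq\Exp\bigl[f(\kappa_{\rho_r})\1{\rho_r\leq\lambda_{x_1}}\bigmid\cF_0\bigr]. \]
To replace the restriction $\{\rho_r\leq\lambda_{x_1}\}$ by the full sample space at the cost of bounded constants, I would iterate: choose an auxiliary level $y>x_1$ so that hypothesis~\ref{item:explosion-a} gives $\Pr(\lambda_{x_1,\rho_{y,S}}<\rho_\infty\mid\cF_{\rho_{y,S}})\leq\eps$ for every $S\in\cT$, and define $T_0=0$, $U_k=\rho_{y,T_k}$, $T_{k+1}=\lambda_{x_1,U_k}\wedge\rho_{r,U_k}$, stopping at the first $N$ with $\kappa_{T_N}\geq r$. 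Then $N$ is stochastically dominated by a geometric variable with mean $\leq(1-\eps)^{-1}$; hypothesis~\ref{item:explosion-c} gives $\Exp[U_k-T_k\mid\cF_{T_k}]\leq B_y$; and applying the integrated supermartingale bound on each segment $[U_k,T_{k+1}]$ and telescoping across excursions should yield $\theta\,\Exp[\rho_r\mid\cF_0]\geq\Exp[f(\kappa_{\rho_r})\mid\cF_0]-O(1)$ with the error uniform in $r$. Since $\Exp[f(\kappa_{\rho_r})\mid\cF_0]\geq f(r)\to\infty$, dividing gives~\eqref{eq:hitting-time-lower-bound}, and monotone convergence in $\rho_r\uparrow\rho_\infty$ forces $\Exp\rho_\infty=\infty$.

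For part~\ref{thm:explosion-ii}, the submartingale form of the integrated inequality gives $\theta\,\Exp[\tau-T\mid\cF_T]\leq\Exp[f(\kappa_\tau)\mid\cF_T]-f(\kappa_T)$. When $f(\infty)=\infty$, the analogous excursion argument combined with the moment bound $\Exp[f(\kappa_{\rho_r})\mid\cF_0]\leq C_r$ yields $\theta\,\Exp[\rho_r\mid\cF_0]\leq\Exp[f(\kappa_{\rho_r})\mid\cF_0]+O(1)$, giving~\eqref{eq:hitting-time-upper-bound}. For the $f(\infty)<\infty$ claim, the submartingale form applied with $T=0$, $\ell=x_1$ and $r\to\infty$ gives directly $\theta\,\Exp[\lambda_{x_1}\wedge\rho_\infty\mid\cF_0]\leq f(\infty)-f(\kappa_0)\leq f(\infty)$, using $f\leq f(\infty)$; iterating once more to cover the below-$x_1$ excursions (each of conditional mean $\leq B_{x_1+1}$ via~\ref{item:explosion-c}) with a geometrically bounded excursion count (via~\ref{item:explosion-a}) yields $\Exp[\rho_\infty\mid\cF_0]\leq C$ for a universal constant $C$.

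The hard part will be the telescoping in the iteration: since jumps of $\kappa$ are only finite (not uniformly bounded) in magnitude, the upcrossing values $\kappa_{U_k}$ at level $y$ can overshoot arbitrarily, so $f(\kappa_{U_k})$ is not bounded a priori and the excursion scheme has to be arranged so that these potentially large terms cancel in pairs through the telescope rather than accumulate as error terms. A subsidiary point is handling the initial case $\kappa_0<x_1$ by inserting a wait-until-$x_1+1$ step, also controlled via~\ref{item:explosion-c}.
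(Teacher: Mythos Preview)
Your approach is essentially the same as the paper's: an excursion decomposition between a low level $x_1$ and an auxiliary high level (your $y$, the paper's $x$), a geometric tail on the number of excursions via the escape-probability hypothesis~\ref{item:explosion-a}, and a control on the up-crossing times via~\ref{item:explosion-c}. You also correctly isolate the only genuine difficulty, namely the possible overshoot $f(\kappa_{U_k})$ at upcrossings.

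There is, however, a real organisational difference that matters for exactly the point you flag as hard. You first pass $t\to\infty$ to get an \emph{integrated} inequality on each down-segment, and then try to sum these over excursions. In that scheme the terms $f(\kappa_{U_k})$ land on the wrong side of the ledger and do not cancel: summing $\theta\,\Exp[T_{k+1}-U_k]+f(\kappa_{U_k})\geq\Exp[f(\kappa_{T_{k+1}})]$ over $k<N$ leaves you with $\Exp[f(\kappa_{\rho_r})]-\sum_{k}f(\kappa_{U_k})$ plus bounded terms, and the sum of overshoots is not controlled. The paper avoids this by \emph{not} integrating first. It works at finite~$t$ with the globally defined compensated process $\zeta_t:=f(\kappa_{t\wedge\rho_r})-\theta(t\wedge\rho_r)$ and telescopes
\[
\zeta_t-\zeta_0=\sum_{k}\bigl(\zeta_{t\wedge s_{k+1}}-\zeta_{t\wedge t_k}\bigr)\1{t_k<\rho_r}
+\sum_{k}\bigl(\zeta_{t\wedge t_{k+1}}-\zeta_{t\wedge s_{k+1}}\bigr)\1{s_{k+1}<\rho_r}.
\]
The second sum is handled in one stroke by the sub/supermartingale hypothesis applied with $T=s_{k+1}$: its conditional expectation has the required sign \emph{as a difference}, so the potentially large value $f(\kappa_{s_{k+1}})$ never has to be bounded on its own. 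Only the first (up-crossing) sum is bounded pointwise, and there one uses $f(\kappa_{t_k})\leq f(x_1)$, $f\geq0$, and the time bound $\Exp[s_{k+1}-t_k\mid\cF_{t_k}]\leq B_x$. The resulting finite-$t$ inequality $\Exp[f(\kappa_{t\wedge\rho_r})\mid\cF_0]\lessgtr C+\theta\,\Exp[t\wedge\rho_r\mid\cF_0]$ is then pushed to $t\to\infty$ at the very end via Fatou (for~\ref{thm:explosion-i}) or dominated/monotone convergence (for~\ref{thm:explosion-ii}). This is precisely the ``cancellation in pairs'' you anticipate; the trick is to keep the compensated increments intact rather than splitting them into $f$- and $\theta t$-parts before summing.

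A minor point: in your first paragraph, dominated convergence for $\Exp[f(\kappa_{T+v_t})\mid\cF_T]\to\Exp[f(\kappa_\tau)\mid\cF_T]$ needs a dominant such as $f(r)\vee f(\kappa_{\rho_{r,T}})$, which is only known to be integrable under the extra hypothesis of part~\ref{thm:explosion-ii}. For part~\ref{thm:explosion-i} you should use Fatou instead (the inequality goes the right way).
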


The conditional expectation $\Exp[\rho_r \mid \cF_0]$ in~\eqref{eq:hitting-time-lower-bound} and~\eqref{eq:hitting-time-upper-bound} stresses that these bounds hold for any (possibly random) starting value $\kappa_0$.
In particular, this uniformity in the starting point yields the strong version of explosion of the reflected diffusion in Theorem~\ref{thm:lln}. 
The proof of Theorem~\ref{thm:explosion} will make use of the following lemma. Lemma~\ref{lem:bounded-set-correction}\ref{lem:bounded-set-correction-i} will also be used in Section~\ref{sec:lln} (see the proof of Lemma~\ref{lem:xi-qv}).

\begin{lemma}
\label{lem:bounded-set-correction}
Suppose that $\kappa = (\kappa_t)_{t \in \RP}$ is a $[0,\infty]$-valued
$(\cF_t)$-adapted rcll process with jumps of finite magnitude, and 
that
hypotheses~\ref{item:explosion-a}, \ref{item:explosion-b},
and~\ref{item:explosion-c} of Theorem~\ref{thm:explosion} hold.
 Then for any $f$ in~\eqref{eq:def_f} and $r\in\RP$
such that $\Exp [f(\kappa_{\rho_r}) \mid \cF_0] \leq C_r$ for some constant $C_r \in\RP$,  the following statements are true.
\begin{thmenumi}[label=(\roman*)]
\item
\label{lem:bounded-set-correction-i}
If~\eqref{eq:supermartingale-condition} holds,
there is a constant $C \in\RP$ such that, for all $t\in\RP$,
\[ \Exp\left[ f (\kappa_{t \wedge \rho_r } ) \mid \cF_0\right] \leq C + f(\kappa_0)+ \theta \Exp [ t \wedge \rho_r \mid \cF_0 ], \as \]
\item
\label{lem:bounded-set-correction-ii}
If~\eqref{eq:submartingale-condition} holds,
there is a constant $C \in\RP$ such that, for all $t\in\RP$, 
\[ \theta \Exp [ t \wedge \rho_r \mid \cF_0] 
\leq C + \Exp [f(\kappa_{t\wedge \rho_r}) \mid \cF_0] , \as \]
\end{thmenumi}
\end{lemma}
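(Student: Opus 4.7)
Both parts hinge on applying the (super- or sub-)martingale property of $\zeta^{(f,\theta)}$ at $T=0$ with $\ell := x_1$, which controls $\Exp[f(\kappa_{t \wedge \lambda_{x_1} \wedge \rho_r}) \mid \cF_0]$ in terms of $f(\kappa_0)$ and $\Exp[t \wedge \lambda_{x_1} \wedge \rho_r \mid \cF_0]$. The remaining work is to convert this into the claimed bound involving the quantities stopped at just $t \wedge \rho_r$, absorbing the discrepancy into the constant~$C$. Two ingredients are used throughout: right-continuity of $\kappa$ on the closed set $(-\infty,x_1]$ yields $\kappa_{\lambda_{x_1}} \leq x_1$ on $\{\lambda_{x_1} < \infty\}$, hence $f(\kappa_{\lambda_{x_1}}) \leq f(x_1)$; and identity~\eqref{eq:explosion-times} gives $\rho_{r, \lambda_{x_1}} = \rho_r$ on $\{\lambda_{x_1} \leq \rho_r\}$.

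For part~(ii), the submartingale inequality~\eqref{eq:submartingale-condition} rearranges to $\theta \Exp[t \wedge \lambda_{x_1} \wedge \rho_r \mid \cF_0] \leq \Exp[f(\kappa_{t \wedge \lambda_{x_1} \wedge \rho_r}) \mid \cF_0]$. The right-hand side is at most $f(x_1) + \Exp[f(\kappa_{t \wedge \rho_r}) \mid \cF_0]$, by splitting according to whether $\lambda_{x_1} \leq t \wedge \rho_r$ (when $f(\kappa_{t \wedge \lambda_{x_1} \wedge \rho_r}) = f(\kappa_{\lambda_{x_1}}) \leq f(x_1)$) or not (when the two process values coincide). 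The time gap $\Exp[t \wedge \rho_r - t \wedge \lambda_{x_1} \wedge \rho_r \mid \cF_0] \leq \Exp[(\rho_r - \lambda_{x_1}) \1{\lambda_{x_1} < \rho_r} \mid \cF_0]$ is controlled by hypothesis~\ref{item:explosion-c} at $T := \lambda_{x_1}$ with $x := r$, combined with $\kappa_{\lambda_{x_1}} \leq x_1$, the identity $\rho_{r,\lambda_{x_1}} = \rho_r$, and the tower property, yielding a bound of $B_r$. Setting $C := f(x_1) + \theta B_r$ completes this part.

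For part~(i), the supermartingale inequality~\eqref{eq:supermartingale-condition} gives $\Exp[f(\kappa_{t \wedge \lambda_{x_1} \wedge \rho_r}) \mid \cF_0] \leq f(\kappa_0) + \theta \Exp[t \wedge \rho_r \mid \cF_0]$. Decompose $\Exp[f(\kappa_{t \wedge \rho_r}) \mid \cF_0]$ on the event $\{\lambda_{x_1} < t \wedge \rho_r\}$: on its complement the two process values coincide and the supermartingale bound applies directly. On $\{\lambda_{x_1} < t \wedge \rho_r\}$ itself, subdivide further into $\{t < \rho_r\}$, where $\kappa_t < r$ by the definition of $\rho_r$ and hence $f(\kappa_t) \leq f(r)$, and $\{t \geq \rho_r\}$, where $\kappa_{t \wedge \rho_r} = \kappa_{\rho_r}$ has conditional expectation at most $C_r$ by the standing hypothesis. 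Adding the contributions yields $C := f(r) + C_r$.

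The main technical obstacle is the careful handling of conditional expectations when invoking hypothesis~\ref{item:explosion-c} at the stopping time $\lambda_{x_1}$: the event $\{\lambda_{x_1} < \rho_\infty\}$ needed by that hypothesis is respected automatically, because $\lambda_{x_1} < \rho_r$ forces $\lambda_{x_1} < \infty$ (via hypothesis~\ref{item:explosion-b}), and $\kappa_{\lambda_{x_1}} \leq x_1 < \infty$ rules out $\lambda_{x_1} = \rho_\infty$; the tower property then converts the conditional bound $B_r$ into the required bound given $\cF_0$. A sharper constant $C$ independent of $r$ in part~(i) is attainable by iterating the supermartingale across excursions between $x_1$ and a higher level $x$ chosen via hypothesis~\ref{item:explosion-a} to obtain geometric decay in the number of excursions, but this refinement is not needed for the bound as stated.
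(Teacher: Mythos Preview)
Your proof is correct for the lemma as literally stated, and takes a genuinely simpler route than the paper. The paper proceeds by an excursion decomposition: it picks a level $x > x_1$ via hypothesis~\ref{item:explosion-a} so that returns below $x_1$ after reaching $x$ occur with probability at most $1/2$, defines alternating crossing times $t_0 = 0$, $s_k = \rho_{x,t_{k-1}}$, $t_k = \lambda_{x_1,s_k}$, and writes $f(\kappa_{t\wedge\rho_r}) - \theta(t\wedge\rho_r) - f(\kappa_0)$ as a telescoping sum over these excursions. The sub/supermartingale condition is applied at each $T = s_k$, hypothesis~\ref{item:explosion-c} bounds each ``up'' excursion uniformly, and the geometric tail $\Pr(t_k < \rho_\infty \mid \cF_0) \leq 2^{-k}$ coming from hypothesis~\ref{item:explosion-a} makes the resulting sum converge. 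Your argument, by contrast, applies the martingale condition just once at $T = 0$ and handles the discrepancy between stopping at $\lambda_{x_1}\wedge\rho_r$ versus $\rho_r$ by a direct case split, invoking hypothesis~\ref{item:explosion-c} once (part~(ii)) or the standing bound $C_r$ on $\Exp[f(\kappa_{\rho_r})\mid\cF_0]$ (part~(i)); hypothesis~\ref{item:explosion-a} plays no role in your argument.

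The trade-off is exactly the one you flag in your final paragraph: the paper's constant depends only on $\theta$, $x_1$, and the chosen level $x$, whereas yours depends on $r$ through $B_r$ in part~(ii) and through $f(r) + C_r$ in part~(i). You are right that the lemma as stated permits $C$ to depend on $r$; but your remark that the $r$-independence ``is not needed'' deserves a caveat. In the paper the lemma feeds directly into Theorem~\ref{thm:explosion}, where one takes $t \to \infty$ and then $r = r_n \to \infty$: the limits in~\eqref{eq:hitting-time-lower-bound} and~\eqref{eq:hitting-time-upper-bound}, and the bound $\Exp[\rho_\infty \mid \cF_0] \leq (C + f(\infty))/\theta$ in the explosive case, all require a fixed $C$. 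With your constant for part~(i), for instance, the ratio $C/\Exp[f(\kappa_{\rho_{r_n}}) \mid \cF_0]$ is at least $C_{r_n}/\Exp[f(\kappa_{\rho_{r_n}}) \mid \cF_0] \geq 1$ and does not vanish, so~\eqref{eq:hitting-time-lower-bound} would not follow. Lemma~\ref{lem:xi-qv} likewise uses part~(i) with a constant uniform in $r$ before letting $r \to \infty$. So while your argument suffices for the lemma in isolation, the excursion refinement is essential to the paper's programme rather than optional.
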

\begin{proof}
Let $x_1 \in \RP$ be the constant appearing in
hypotheses~\ref{item:explosion-c} of Theorem~\ref{thm:explosion}.
By~\eqref{eq:escape-probability}, there exists $x\in(x_1,\infty)$ so that
 for all $T \in \cT$,
\begin{equation}
\label{eq:return-estimate-for-explosion-1/2}
\Pr ( \lambda_{x_1, T} < \rho_\infty \mid \cF_T ) \leq 1/2, \text{ on } \{ \kappa_T \geq x, \, T < \rho_\infty \} . \end{equation}
Set $t_0 :=0$. Using~\eqref{eq:lambda-rho}, for $k \in \N$, we can define recursively the stopping times 
\[ s_k := \rho_{x,t_{k-1}} 
 \text{ and } 
t_k := \lambda_{x_1, s_k}. 
\]
By hypothesis~\eqref{eq:bounded-escape},  
$s_k<\infty$ on the event $\{ t_{k-1} < \rho_\infty \}$. By~\eqref{eq:explosion-times} we have  the following equality of events: $\{s_k=\rho_\infty, \, t_{k-1} < \rho_\infty\}=\{s_k=\rho_{\infty, \, t_{k-1}}, t_{k-1} < \rho_\infty\}$. However,  on this event, $\kappa$ can neither be continuous nor have a jump at $s_k$, as in both cases  this would imply  $\kappa_{s_k}<\infty$ (recall that $\kappa_{s_k-}\leq x$ and the jumps of $\kappa$ have finite magnitude by assumption) and thus 
$s_k<\rho_{\infty,t_{k-1}}$. 
Hence, for all $k\in\N$, we must have 
$\{ t_{k-1} < \rho_\infty \} = \{ s_k < \rho_\infty \}$ up to events
of probability~$0$. Thus we may apply~\eqref{eq:return-estimate-for-explosion-1/2} at times $T= s_k \in \cT$ to obtain 
\[ \Pr ( t_{k} < \rho_\infty \mid \cF_0) =
\Exp \left[ \Pr ( t_{k} < \rho_\infty \mid \cF_{s_{k}} ) \1 { t_{k-1} < \rho_\infty} \mid \cF_0  \right]
\leq (1/2) \Pr  ( t_{k-1} < \rho_\infty \mid \cF_0),\as\]
for all $k \in \N$.
Iterating this inequality shows that, for any $k \in \ZP$,
\begin{equation}
    \label{eq:return-tail}
    \Pr ( t_k < \rho_\infty \mid \cF_0 ) \leq 2^{-k} \as,\text{ implying $\Pr ( N  < \infty \mid \cF_0 ) = 1\as$,}
\end{equation}
where $N:=\max \{ k \in \ZP : t_k < \rho_\infty \}$.
In particular, the stopping times
 $t_0 < s_1 < t_1 < \cdots  < t_N < \rho_\infty$ 
satisfy $\kappa_{t_k} \leq x_1<x\leq \kappa_{s_k}<\infty$ for all $k<N$
and $t_k=\infty$ for $k>N$.

Pick any $r \in(x,\infty)$, 
define 
$\zeta_t := f (\kappa_{t \wedge \rho_r} ) - \theta (t \wedge \rho_r)$
for all $t\in\RP$
and note that,
by~\eqref{eq:return-tail}, the following sum is finite: 
\begin{align}
\label{eq:zeta-decomposition}
 \nonumber
 \zeta_t - \zeta_0 & = \sum_{k \in \ZP} 
 \left( \zeta_{t \wedge t_{k+1}} - \zeta_{t \wedge t_k} \right) \1 { t_k < \rho_r } \\
  & =  \sum_{k \in \ZP} 
 \left( \zeta_{t \wedge s_{k+1}} - \zeta_{t \wedge t_k} \right)\1 { t_k < \rho_r}
 + \sum_{k \in \ZP} \left( \zeta_{t \wedge t_{k+1}} - \zeta_{t \wedge s_{k+1}} \right) \1 { s_{k+1} < \rho_r} \nonumber\\
 & =  \sum_{k \in \ZP} 
 \left( \zeta_{t \wedge s_{k+1}} - \zeta_{t \wedge t_k} \right)\1 { t_k < \rho_r}
 + \sum_{k \in \N} \left( \zeta_{t \wedge t_k} - \zeta_{t \wedge s_k} \right) \1 { s_k < \rho_r}.
\end{align}

We now establish~\ref{lem:bounded-set-correction-ii}. 
By definitions~\eqref{eq:v-def} and~\eqref{eq:zeta-def} with
$T:=s_k$, $\ell:=x_1$ and $r$ chosen above,
for all $k\in\ZP$ and $t\in\RP$
the following holds
\begin{align}
\nonumber
\left( \zeta_{t \wedge t_k} - \zeta_{t \wedge s_k} \right) \1 { s_k < \rho_r}
 & =
\left( \zeta_{t \wedge t_k} - \zeta_{ s_k} \right) \1 { s_k < \rho_r\wedge t }\\
& =
\left(\zeta^{(f,\theta)}_{t- s_k} - \zeta^{(f,\theta)}_0
\right)\1 { s_k < \rho_r\wedge t},
\label{eq:zeta_sub_super_mart}
\end{align}
since on the event $\{ s_k <  \rho_r \}$ definition~\eqref{eq:lambda-rho} implies $\rho_{r,s_k}=\rho_r$.
Recall that for any two stopping times $T,S\in\cT$, the non-negative variable 
$(S-T)\1{T<S}$ is a stopping time in the filtration $(\cF_{T+t})_{t\in\RP}$.
Thus, as the stopping time $(t-s_k)\1{s_k<t}$ is bounded by $t$ and $\sup_{u\in\RP} |\zeta^{(f,\theta)}_u|\leq f(\kappa_{\rho_r})+\theta\rho_r$ has finite first moment by~\eqref{eq:bounded-escape},
the submartingale property in~\eqref{eq:submartingale-condition} of $\zeta^{(f,\theta)}$
and the optional sampling theorem applied at $(t-s_k)\1{s_k<t}$ yield
$\Exp [ (\zeta^{(f,\theta)}_{t- s_k} - \zeta^{(f,\theta)}_0 )\1 { s_k < \rho_r\wedge t }\mid \cF_0]\geq0$
and hence
\begin{equation}
    \label{eq:down-excursions}
 \Exp \left[ \sum_{k \in \N} \left( \zeta_{t \wedge t_k} - \zeta_{t \wedge s_k} \right) \1 { s_k < \rho_r} \biggmid \cF_0\right] \geq 0, \as
 \end{equation}
Recall that on $\{ t_k < \rho_r\}$ one has $\zeta_{t_k} = f(\kappa_{t_k})-\theta t_k$
and  $\kappa_{t_k} \leq x_1$ for $k \in \N$.
Also, $f$ is non-negative and non-decreasing,
and hence
$\zeta_{t \wedge s_{k+1}} - \zeta_{t \wedge t_k} \geq - f(x_1) - \theta (s_{k+1} - t_k)$
on $\{ t_k < \rho_r\}$, for every $k \in \N$.
In case $k =0$, we note that $t_0 = 0$, and
$\zeta_{t \wedge s_1} - \zeta_{t \wedge t_0}
= \zeta_{t \wedge s_1} - \zeta_0$ which is $0$ unless $\kappa_0 < x$,
hence $\zeta_{t \wedge s_1} - \zeta_{t \wedge t_0} \geq - f(x) - \theta s_1$.
It follows that 
\begin{align*}
  \sum_{k \in \ZP} \left( \zeta_{t \wedge s_{k+1}} - \zeta_{t \wedge t_k} \right) 
  \1 { t_k < \rho_r}
  & \geq - \sum_{k \in \ZP}
\left[ f( x ) + \theta ( s_{k+1} - t_k ) \right]
\1 { t_k < \rho_r }.
\end{align*}
Taking conditional expectations and applying~\eqref{eq:bounded-escape} with $T=t_k$,  we obtain
\begin{align}
\label{eq:up-excursions}
& {} \Exp \left[ \left(f( x  ) + \theta ( s_{k+1} - t_k )\right)
\1 { t_k < \rho_r } \mid \cF_0  \right] \nonumber\\
&{}\quad{} = \Exp \left[ \left( f( x ) + \theta \Exp [    s_{k+1} - t_k \mid \cF_{t_k} ] \right)
\1 { t_k < \rho_r} \mid \cF_0 \right] \nonumber\\
&{}\quad{} \leq \left( f (x  ) + \theta B_x \right) \Pr ( t_k < \rho_\infty \mid \cF_0 ),\as, \end{align}
for all $k \in \ZP$. 
Combining~\eqref{eq:zeta-decomposition}, \eqref{eq:down-excursions}
and~\eqref{eq:up-excursions}, we obtain
\[ \Exp[\zeta_t \mid \cF_0]\geq \Exp [ \zeta_t - \zeta_0 \mid \cF_0 ] \geq - ( f(x  ) + \theta  B_x ) \sum_{k \in \ZP} \Pr ( t_k < \rho_\infty \mid \cF_0) 
\geq - C,\as, \]
where, by~\eqref{eq:return-tail}, $C \in \RP$
is a constant that depends only on $\theta$, $x$, and $f(x)$.
Thus, by the definition of $\zeta_t$, 
the inequality in~\ref{lem:bounded-set-correction-ii} follows.

The proof of~\ref{lem:bounded-set-correction-i} is similar.
By~\eqref{eq:zeta_sub_super_mart} and the assumed supermartingale property in~\eqref{eq:supermartingale-condition},
the inequality in~\eqref{eq:down-excursions} is reversed.
Then from~\eqref{eq:zeta-decomposition}, we obtain
\[ \Exp [  \zeta_t - \zeta_0 \mid \cF_0 ] \leq 
\Exp \left[\sum_{k \in \ZP} 
 \left( \zeta_{t \wedge s_{k+1}} - \zeta_{t \wedge t_k} \right)\1 { t_k < \rho_r} \biggmid \cF_0\right]
 \leq f (x) \sum_{k \in \ZP} \Pr ( t_k < \rho_\infty \mid \cF_0 ) .\]
It follows that $\Exp[ f (\kappa_{t \wedge \rho_r }) \mid \cF_0]\leq C + f ( \kappa_0 ) + \theta \Exp [ t \wedge \rho_r \mid \cF_0 ]$,
which yields~\ref{lem:bounded-set-correction-i}.
\end{proof}

\begin{proof}[Proof of Theorem~\ref{thm:explosion}]
Since $\rho_r\leq \rho_{r'} \as$ for any $0\leq r\leq r'<\infty$, in part~\ref{thm:explosion-i} we may assume $\Exp \rho_r < \infty$ for all $r \in \RP$. 

Under the conditions of part~(i) of the theorem, 
we have from Lemma~\ref{lem:bounded-set-correction}\ref{lem:bounded-set-correction-i},
continuity of $f$,
and (the conditional) Fatou's lemma
\begin{align*}
    \Exp [ f ( \kappa_{\rho_r} ) \mid \cF_0 ]
& \leq \liminf_{t \to \infty}  \Exp [ f ( \kappa_{t \wedge \rho_r} ) \mid \cF_0 ] \\
& \leq C + f (\kappa_0) + \theta \liminf_{t \to \infty} \Exp [ t \wedge \rho_r \mid \cF_0 ] \\
& \leq C + f(\kappa_0) + \theta \Exp [ \rho_r \mid \cF_0],
\end{align*} 
by monotone convergence. Here, $\Exp [ f ({\kappa_{\rho_{r}}}) \mid \cF_0 ] \geq f(r) \uparrow f(\infty)$.
This proves~\ref{thm:explosion-i}.

For part~\ref{thm:explosion-ii}, 
If $f(\infty) < \infty$,
then 
by Lemma~\ref{lem:bounded-set-correction}\ref{lem:bounded-set-correction-ii} we have
$\theta \Exp [ t\wedge\rho_r \mid \cF_0 ] \leq C+f(\infty)<\infty$. Monotone convergence
gives $\Exp [\rho_\infty\mid\cF_0] = \lim_{r \to \infty} \Exp [\rho_r\mid\cF_0] \leq (C+f(\infty))/\theta <\infty$. 
If $f(\infty) = \infty$, as $t\to\infty$, we have 
$\Exp[t\wedge\rho_r\mid\cF_0]\to \Exp[\rho_r\mid\cF_0]$ (by conditional monotone convergence) and 
$\Exp[f(\kappa_{t\wedge\rho_r})\mid\cF_0]\to\Exp[f(\kappa_{\rho_r})\mid\cF_0]$ (by conditional dominated convergence since $f(\kappa_{t\wedge\rho_r}) \leq f(\kappa_{\rho_r})$ for all $t\in\RP$, $f$ is non-decreasing by~\eqref{eq:def_f} and $\Exp[f(\kappa_{\rho_r})\mid\cF_0]$ is bounded by a constant $C_r$ by assumption).
By taking the limit as $t\to\infty$ the display of  Lemma~\ref{lem:bounded-set-correction}\ref{lem:bounded-set-correction-ii}, we obtain
$$
\frac{\Exp[\rho_r \mid \cF_0]}{\Exp[f(\kappa_{\rho_r}) \mid \cF_0]} \leq \frac{1}{\theta} +
\frac{C}{\theta \Exp[f(\kappa_{\rho_r}) \mid \cF_0]} \leq \frac{1}{\theta} +
\frac{C}{\theta f(r)},
$$
since $\rho_r<\infty$, $r\leq \kappa_{\rho_r}$, a.s., and monotonicity of~$f$.
Taking $r = r_n \to \infty$
yields~\eqref{eq:hitting-time-upper-bound}.   
\end{proof}

\subsection{Non-explosion and transience}
\label{sec:non-explosion}

Our non-explosion result is Theorem~\ref{thm:non-explosion} below.
For any right-continuous semimartingale $X$, we denote by $[X]:=([X]_t)_{t \in \RP}$ the corresponding quadratic variation process. Let $f$ be as in~\eqref{eq:def_f}
and assume that,  for any $0 < \ell < r < \infty$, the process
$\zeta^{(f,\theta)}$, defined by~\eqref{eq:zeta-def},
is a supermartingale.
Note that for any $r'>r>\ell$ these supermartingales coincide on the stochastic interval
$[0,\rho_{r, T})$ and
the quadratic variation 
$[\zeta^{(f,\theta)}]_{\lambda_{\ell, T} \wedge \rho_{r, T}}$
is non-decreasing almost surely as $r\to\infty$. 
Thus the limit 
$\lim_{r\to\infty}[\zeta^{(f,\theta)} ]_{S\wedge \lambda_{\ell, T} \wedge \rho_{r, T}}$
exists in $[0,\infty]$ for any $(\cF_t)$-stopping time $S$ in $[0,\infty]$.

\begin{theorem}
\label{thm:non-explosion}
Suppose that $\kappa = (\kappa_t)_{t \in\RP}$ is a $[0,\infty]$-valued
$(\cF_t)$-adapted rcll process with jumps of finite magnitude. 
Assume $f$ satisfies~\eqref{eq:def_f}
and,
for every $r \in \RP$, we have $\Pr ( \rho_r < \infty ) = 1$.
Suppose also that the following hold.
\begin{thmenumi}[label=(\alph*)]
\item\label{thm:non-explosion-a}
For all $\ell \in \RP$ and all $\eps >0$, there exists $x > \ell$ such that~\eqref{eq:escape-probability} holds for all~$T \in \cT$.
\item\label{thm:non-explosion-b} 
There exist constants $\ell \in \RP$ and 
$\theta \in (0,\infty)$ such that for all $\ell < x < r < \infty$ and $T  := \rho_x$, 
the process $\zeta^{(f,\theta)}$, defined at~\eqref{eq:zeta-def}
is a supermartingale, i.e., for all  $s, t \in \RP$,  $t \geq s$,
\eqref{eq:supermartingale-condition} holds.
\item\label{thm:non-explosion-c} On $\{ \rho_\infty < \infty\}$, it holds that 
$\lim_{r\to\infty}[\zeta^{(f,\theta)} ]_{(\lambda_{\ell, \rho_x} \wedge \rho_{r})-\rho_x} < \infty$,
where
$\ell$ is as in~\ref{thm:non-explosion-b} (recall $\rho_r=\rho_{r,\rho_x}$~a.s. by~\eqref{eq:explosion-times} since $x<r$).
\item\label{thm:non-explosion-d} Assume
$\Exp[(f(\kappa_{\rho_x+T_s})-f(\kappa_{(\rho_x+T_s)-}))^2\1{T_s<\rho_\infty-\rho_x}]<\infty$, where
we define
\begin{equation}
\label{eq:QV_crossing}
    T_s := \inf \bigl\{ t \in \RP : \lim_{r\to\infty}[\zeta^{(f,\theta)} ]_{t\wedge ((\lambda_{\ell, \rho_x} \wedge \rho_{r})-\rho_x)} \geq s \bigr\}, \text{ for any } s>0.
\end{equation}
\end{thmenumi}
Then $ \rho_\infty = \infty$ almost surely.
\end{theorem}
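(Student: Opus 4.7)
I would argue by contradiction, assuming $\Pr(\rho_\infty<\infty)>0$ and extracting a contradiction from the Doob-Meyer decomposition of the supermartingale $\zeta^{(f,\theta)}$. First I would upgrade the hypothesis $\Pr(\rho_r<\infty)=1$ to $\Pr(\rho_r<\rho_\infty)=1$ using the finite-jump assumption: on $\{\rho_\infty<\infty\}$, $\kappa_{\rho_r}$ is a.s.\ finite, so any $r'>\kappa_{\rho_r}$ gives $\rho_{r'}>\rho_r$. Together with hypothesis~\ref{thm:non-explosion-a}, the ``Moreover'' clause of Theorem~\ref{thm:transience}---which only uses the escape-probability estimate~\eqref{eq:escape-probability}---then yields $\lim_{t\uparrow\rho_\infty}\kappa_t=\infty$ almost surely. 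Fix $\ell,\theta$ as in~\ref{thm:non-explosion-b} and $\eps>0$; hypothesis~\ref{thm:non-explosion-a} furnishes $x>\ell$ with $\Pr(\lambda_{\ell,\rho_x}<\rho_\infty\mid\cF_{\rho_x})\leq\eps$ on $\{\kappa_{\rho_x}\geq x\}\cap\{\rho_x<\rho_\infty\}$. Since $\kappa_{\rho_x}\geq x$ a.s.\ on $\{\rho_x<\infty\}$ and $\rho_x<\rho_\infty$ a.s.\ on $\{\rho_\infty<\infty\}$ by the preceding step, the event $E:=\{\rho_\infty<\infty,\,\lambda_{\ell,\rho_x}\geq\rho_\infty\}$ satisfies $\Pr(E)\geq(1-\eps)\Pr(\rho_\infty<\infty)$, and on $E$ we have $\kappa_t\geq\ell$ on $[\rho_x,\rho_\infty)$ and $v_t\uparrow\rho_\infty-\rho_x<\infty$ as $r,t\to\infty$.

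On $E$ I would apply the Doob-Meyer decomposition to the supermartingales of~\ref{thm:non-explosion-b} with $T=\rho_x$: for each $r>x$, $\zeta^{(f,\theta)}_t=f(\kappa_{\rho_x})+M^{(r)}_t-A^{(r)}_t$, with $M^{(r)}$ a local martingale and $A^{(r)}$ predictable non-decreasing. The decompositions are coherent in $r$ on $[0,(\lambda_{\ell,\rho_x}\wedge\rho_r)-\rho_x]$, so a limiting local martingale $M$ is well-defined on $[0,\rho_\infty-\rho_x)$ with $[M]=[\zeta^{(f,\theta)}]$ (the finite-variation compensator contributes nothing to the QV). The main obstacle is to show that $M_t$ admits an a.s.\ finite limit on $E$. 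Hypothesis~\ref{thm:non-explosion-c} furnishes the a.s.\ bound $[M]_\infty<\infty$ on $E$, but this is not a moment bound, and $M$ may have jumps. I would localize along the stopping times $T_s$ of~\eqref{eq:QV_crossing}: on $\{T_s<\rho_\infty-\rho_x\}$ the stopped QV obeys $[M]_{T_s}\leq s+(\Delta M_{T_s})^2$, and hypothesis~\ref{thm:non-explosion-d} controls $\Exp(\Delta M_{T_s})^2$. The Burkholder-Davis-Gundy inequality then gives $\Exp\sup_{t\leq T_s}M_t^2<\infty$ uniformly in $r$; letting $s$ increase to the a.s.\ finite value of $[M]_\infty$ yields an a.s.\ finite limit $M_\infty$ on $E$.

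Letting $t\to\infty$ in $\zeta^{(f,\theta)}_t=f(\kappa_{\rho_x})+M_t-A_t$ on $E$ gives $f(\infty)-\theta(\rho_\infty-\rho_x)=f(\kappa_{\rho_x})+M_\infty-A_\infty$, and since $A_\infty\geq 0$ and the remaining right-hand terms are finite on $E$, we obtain $f(\infty)\leq f(\kappa_{\rho_x})+M_\infty+\theta(\rho_\infty-\rho_x)<\infty$ on $E$. In the content-bearing regime $f(\infty)=\infty$, this contradicts $\Pr(E)>0$; letting $\eps\downarrow 0$ then forces $\Pr(\rho_\infty<\infty)=0$, as claimed. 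The technical heart of the argument is the preceding step, where an almost-sure (rather than moment) quadratic-variation bound must be converted into a.s.\ convergence of $M$ in the presence of jumps---precisely the role that the combination of~\ref{thm:non-explosion-c} and~\ref{thm:non-explosion-d} is engineered to play.
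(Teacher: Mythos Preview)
Your approach is essentially the same as the paper's: both proceed via the Doob--Meyer decomposition of the supermartingales $\zeta^{(f,\theta)}$ (coherent in $r$), localize using the stopping times $T_s$ from hypothesis~\ref{thm:non-explosion-d} to obtain $L^2$-control of the martingale part, use hypothesis~\ref{thm:non-explosion-c} to guarantee that on $\{\rho_\infty<\infty\}$ some $T_s$ equals $+\infty$, and then derive a contradiction from the finiteness of the martingale limit against $f(\infty)=\infty$, finally letting $\eps\downarrow 0$. The differences are cosmetic: the paper extracts convergence of the discrete-time martingale $(M_{(\rho_m-\rho_x)\wedge T_n})_{m}$ via the identity $\Exp[M^2]=\Exp[[M]]$, whereas you invoke BDG; the paper uses $f(\kappa_{\rho_m})\geq f(m)\to\infty$ directly, whereas you first pass through the ``Moreover'' clause of Theorem~\ref{thm:transience} to get $\kappa_t\to\infty$; and the paper works with the inequality $\zeta\leq f(\kappa_{\rho_x})+M$ (from $A\leq 0$) rather than your equation $f(\infty)-\theta(\rho_\infty-\rho_x)=f(\kappa_{\rho_x})+M_\infty-A_\infty$, which is cleaner since $A_\infty$ could in principle be $+\infty$. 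Your explicit flag that $f(\infty)=\infty$ is the content-bearing case is apt---the paper's proof also tacitly relies on this.
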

\begin{proof}
Fix $\ell$ as in~\ref{thm:non-explosion-b} and $\eps >0$. Then
by hypothesis~\ref{thm:non-explosion-a} and~\eqref{eq:escape-probability} applied at $T=\rho_x$, we may take $x > \ell$ 
for which $\Pr ( \lambda_{\ell,\rho_x} < \rho_\infty \mid \cF_{\rho_x} ) \leq \eps$.

Take $r\in( x,\infty)$. Since 
$\Pr (\rho_r < \infty ) =1$ and $\kappa$ has jumps of finite magnitude, 
$\Pr ( \rho_r < \rho_\infty ) =1$. 
Hypothesis~\ref{thm:non-explosion-b} 
shows that $(\zeta_{t}^{(f,\theta)})_{t \in\RP}$ is a rcll  supermartingale
 adapted to $(\cF_{\rho_x+t})_{t \in \RP}$.
 Since it is also a 
 semimartingale,
it has (a.s.~unique) canonical decomposition $\zeta^{(f,\theta)}_t = \zeta^{(f,\theta)}_0 + M^r_t + A^r_t$, 
which agrees with its Doob--Meyer decomposition~\cite[p.~116--117]{protter};
here $\zeta^{(f,\theta)}_0 = f (\kappa_{\rho_x} )$,
$(M^r_t)_{t\in\RP}$
is a local martingale, $M^r_0 = A^r_0 = 0$, and $(A^r_t)_{t \in\RP}$ 
is non-increasing. Note that,
 by~\eqref{eq:explosion-times}, $\rho_r = \rho_{r, \rho_x}  \geq \rho_x$ whenever $r \geq x$.
By uniqueness of the decomposition, for $u \geq r$ we have
$M^{u}_{t \wedge ( \rho_{r} - \rho_x)} = M^r_t$ 
and $A^u_{t \wedge ( \rho_{r} - \rho_x)}=A^r_t$ 
for all $t \in \RP$.
Thus if we define $M_t := \limsup_{r \to \infty} M^r_t$,
$A_t:=\limsup_{r \to \infty} A^r_t$,
we have that $M_t = M^{r}_t$ and $A_t=A^r_t$ on $\{ t \leq \rho_{r} - \rho_x \}$,
and $M_t = \lim_{r \to \infty} M^r_t$, 
$A_t = \lim_{r \to \infty} A^r_t$
on $\{ t < \rho_\infty -\rho_x\}$.
Define $\zeta=(\zeta_t)_{t\in\RP}$
by
$\zeta_t := f(\kappa_{\rho_x})+ M_t + A_t$.
Then, since $A_t\leq0$ for all $t\in\RP$, we have 
\begin{equation}
    \label{eq:non-explosion-martingale}
    \zeta_{\rho_{r}-\rho_x} \leq 
    f ( \kappa_{\rho_x}) + M_{\rho_{r}-\rho_x}, \text{ for any $r > \ell$.}
\end{equation}

For $n \in \N$, let 
$T_n$
be as in~\eqref{eq:QV_crossing} with $s=n$.
Then, for all $u>\ell$, we have 
\begin{equation}
\label{eq:bound_QV}
[ \zeta ]_{t \wedge (\rho_u-\rho_x) \wedge T_n} \leq n+(f(\kappa_{\rho_x+T_n})-f(\kappa_{(\rho_x+T_n)-}))^2. \end{equation}
Note that by~\cite[Cor.~II.6.3]{protter}
we have 
$\Exp ( [\zeta]_{t \wedge (\rho_u-\rho_x) \wedge T_n} )=\Exp\left[ (M^u_{t \wedge T_n})^2\right]$.
Thus, by~\eqref{eq:bound_QV} and the assumption in~\ref{thm:non-explosion-d}, for any fixed $n \in \N$, we have $$\sup_{u>\ell}\sup_{t\in\RP}\Exp\left[ (M^u_{t \wedge T_n})^2\right]<\infty.$$
Let $\N_x := \N \cap (x, \infty)$.
Since $M_{(\rho_m-\rho_x) \wedge T_n}=M^m_{(\rho_m-\rho_x) \wedge T_n}$
and
$\sup_{m \in \N_x} \Exp [ M_{(\rho_m-\rho_x) \wedge T_n}^2 ] 
= \sup_{m \in \N_x} \Exp  [ ( M^m_{(\rho_m-\rho_x) \wedge T_n} )^2 ] < \infty$, for any fixed $n\in\N$, the discrete-time process $(M_{(\rho_m-\rho_x) \wedge T_n})_{m \in \N_x}$
is an $L^2$-bounded martingale.
Hence, for each $n \in \N$,
the limit $\lim_{m \to \infty} M_{(\rho_m-\rho_x) \wedge T_n} =: Q_n$, exists and is finite, a.s.

Hypothesis~\ref{thm:non-explosion-c} can be expressed in terms of $\zeta$, defined above, as follows:
$[ \zeta ]_{\rho_\infty-\rho_x} < \infty$ 
on $\{ \rho_\infty < \infty \}$. 
Thus,  on $\{ \rho_\infty < \infty\}$, 
there exists a random $n_0\in\N$ such that 
$T_{n_0} = \infty$.
 Hence, on $\{ \rho_\infty < \infty\}$, we have 
 $Q_{n_0} = \lim_{m \to \infty} M_{(\rho_m-\rho_x) \wedge T_{n_0}}
 = \lim_{m \to \infty} M_{(\rho_m-\rho_x)}$ and $Q_{n_0}<\infty$ a.s. 
We conclude that
\[
\limsup_{m \to \infty}
f ( \kappa_{\lambda_{\ell,\rho_x} \wedge \rho_{m}} ) 
\leq \limsup_{m \to \infty} \zeta_{\rho_m-\rho_x} 
+ \theta \rho_\infty < \infty , \text{ on } \{ \rho_\infty < \infty \},\]
where the second inequality follows from~\eqref{eq:non-explosion-martingale}.
Thus we find
\begin{equation}
\label{eq:bound_non-explosion}
\sup_{m > \ell} \left( f (\kappa_{\rho_m}) \1{ \rho_m < \lambda_{\ell,\rho_x} } \right) < \infty , \text{ on } \{ \rho_\infty < \infty \}.
\end{equation}
Since $f(\kappa_{\rho_m})\geq f(m)$,
we have: 
$f(\kappa_{\rho_m}) \1{ \rho_m < \lambda_{\ell,\rho_x} } \to \infty$ if and only if $\{ \rho_\infty \leq \lambda_{\ell,\rho_x} \}$.
Hence~\eqref{eq:bound_non-explosion} implies 
$\{ \rho_\infty < \infty \}\subseteq \{ \rho_\infty > \lambda_{\ell,\rho_x} \}$,
yielding
$\Pr ( \rho_\infty < \infty \mid \cF_{\rho_x} )
\leq \Pr ( \lambda_{\ell,\rho_x} < \rho_\infty \mid \cF_{\rho_x} ) \leq \eps$, by the choice of~$x > \ell$ (see assumption in~\ref{thm:non-explosion-a}). Thus $\Pr ( \rho_\infty < \infty) \leq \eps$
and, since $\eps>0$ was arbitrary, the result follows.
\end{proof}

Theorem~\ref{thm:martingale-lln}
furnishes 
upper and lower bounds on the almost-sure growth rate of a non-explosive process on~$\R$.
The conditions~\ref{thm:martingale-lln-upper} and~\ref{thm:martingale-lln-lower}
 bound  the rate
at which the process accumulates drift outside a bounded set, which  plays a role familiar from Foster--Lyapunov conditions in discrete time.

\begin{theorem}
\label{thm:martingale-lln}
Suppose that $\kappa = (\kappa_t)_{t \in\RP}$ is an~$\RP$-valued (i.e.~$\Pr ( \rho_\infty = \infty) = 1$)
$(\cF_t)$-adapted rcll process.
Suppose the following.
\begin{thmenumi}[label=(\alph*)]
\item\label{thm:martingale-lln-a}
For every $r \in \RP$, $\Pr ( \rho_r < \infty ) = 1$. 
For all $\ell \in \RP$ and all $\eps >0$, there exists $x > \ell$ such that~\eqref{eq:escape-probability} holds for all $T \in \cT$.
\item\label{thm:martingale-lln-b} 
Let $f(\kappa)$ be a semimartingale for some $f$ satisfying~\eqref{eq:def_f}.
Assume that for some $\eta \in (0,2)$, $\lim_{t \to \infty} t^{-\eta} \Exp \bigl( [ f(\kappa) ]_{t} \bigr)  = 0$.
\end{thmenumi}
Then the following statements hold.
\begin{thmenumi}[label=(\roman*)]
\item
\label{thm:martingale-lln-upper}
If there exist constants $\ell \in \RP$ and 
$\theta \in (0,\infty)$ such that for all $x\in(\ell,\infty)$ and $T  := \rho_x$, 
the process $\zeta^{(f,\theta)}$, defined at~\eqref{eq:zeta-def} (with 
$r :=\infty$)
is a supermartingale (i.e., for all  $s, t \in \RP$,  $t \geq s$,
\eqref{eq:supermartingale-condition} holds),
then 
$\limsup_{t \to \infty} (f(\kappa_t)/t) \leq \theta$, a.s.
\item
\label{thm:martingale-lln-lower}
If there exist constants $\ell \in \RP$ and 
$\theta \in (0,\infty)$ such that for all $x\in(\ell,\infty)$ and $T  := \rho_x$, 
the process $\zeta^{(f,\theta)}$, defined at~\eqref{eq:zeta-def} (with 
$r :=\infty$)
is a submartingale (i.e., for all  $s, t \in \RP$,  $t \geq s$,
\eqref{eq:submartingale-condition} holds),
then $\liminf_{t \to \infty} (f(\kappa_t)/t) \geq \theta$, a.s.
\end{thmenumi}
\end{theorem}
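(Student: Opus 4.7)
The plan is to combine the Doob--Meyer decomposition of $\zeta^{(f,\theta)}$, taken with starting stopping time $T=\rho_x$ and $r=\infty$, with a strong law for its local martingale part driven by the quadratic-variation hypothesis~\ref{thm:martingale-lln-b}. To begin, I would apply the last assertion of Theorem~\ref{thm:transience}: the escape-probability bound~\eqref{eq:escape-probability} from hypothesis~\ref{thm:martingale-lln-a}, together with $\Pr(\rho_r < \infty) = 1$ for every $r$ and non-explosion $\Pr(\rho_\infty = \infty) = 1$, imply $\lim_{t\to\infty}\kappa_t = \infty$, a.s. Consequently, for any fixed $\ell\in\RP$ there is almost surely a random $x_\star \geq \ell$ such that $\lambda_{\ell,\rho_x}=\infty$ for every $x\geq x_\star$; on this event $v_t=t$ in~\eqref{eq:v-def}, and~\eqref{eq:zeta-def} becomes
\begin{equation*}
\zeta^{(f,\theta)}_t = f(\kappa_{\rho_x+t})-\theta t .
\end{equation*}

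For part~\ref{thm:martingale-lln-upper}, fix $x>\ell$ and use the Doob--Meyer decomposition of the $(\cF_{\rho_x+t})$-supermartingale~$\zeta^{(f,\theta)}$, writing $\zeta^{(f,\theta)}_t-f(\kappa_{\rho_x})=M^x_t+A^x_t$, with $M^x$ a local martingale ($M^x_0=0$) and $A^x$ predictable, non-increasing ($A^x_0=0$). On $\{\lambda_{\ell,\rho_x}=\infty\}$ this gives the pathwise estimate
\begin{equation*}
f(\kappa_{\rho_x+t}) = f(\kappa_{\rho_x}) + \theta t + M^x_t + A^x_t \leq f(\kappa_{\rho_x}) + \theta t + M^x_t .
\end{equation*}
The key step is then to show $M^x_t/t\to 0$, a.s. Since $\theta v_t$ is continuous of finite variation, $[\zeta^{(f,\theta)}]_t = [f(\kappa_{\rho_x+v_\cdot})]_t \leq [f(\kappa)]_{\rho_x+t}$, and the standard relation between the quadratic variations of the two parts in a Doob--Meyer decomposition yields $[M^x]_t \leq C\,[f(\kappa)]_{\rho_x+t}$ for a constant $C$. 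Restricting to an event $\{\rho_x\leq T_0\}$ of probability close to one (available since $\rho_x<\infty$, a.s.) and invoking hypothesis~\ref{thm:martingale-lln-b} then gives $\Exp[M^x]_t=o(t^\eta)$ with $\eta<2$. A dyadic Doob $L^2$-maximal argument applied on the intervals $[2^k,2^{k+1}]$ yields $M^x_t/t\to 0$, a.s., and letting $T_0\to\infty$ extends this to full probability.

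Combining, $\limsup_{t\to\infty} f(\kappa_t)/t \leq \theta$, a.s., proving part~\ref{thm:martingale-lln-upper}. Part~\ref{thm:martingale-lln-lower} follows by the symmetric argument: under~\eqref{eq:submartingale-condition} the compensator $A^x$ is non-decreasing, so $f(\kappa_{\rho_x+t}) \geq f(\kappa_{\rho_x})+\theta t + M^x_t$ on $\{\lambda_{\ell,\rho_x}=\infty\}$, and the same martingale strong law yields $\liminf_{t\to\infty} f(\kappa_t)/t \geq \theta$, a.s.

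The main obstacle I anticipate is the rigorous derivation of the strong law $M^x_t/t\to 0$. First, the bound $[M^x]_t \leq C[f(\kappa)]_{\rho_x+t}$ must accommodate the possibly jumpy predictable finite-variation compensator $A^x$; second, hypothesis~\ref{thm:martingale-lln-b} is stated in terms of the unshifted $\Exp[f(\kappa)]_t$, which motivates the $T_0$-truncation to deal with the random time shift by $\rho_x$; third, the dyadic argument must be scaled carefully to ensure summability in~$k$ given only $\eta<2$.
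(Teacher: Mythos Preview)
Your approach matches the paper's: Doob--Meyer decompose $\zeta^{(f,\theta)}$ started at $T=\rho_x$, show the local-martingale part satisfies $M^x_t/t\to 0$ via Doob's $L^2$ maximal inequality on a dyadic sequence using hypothesis~\ref{thm:martingale-lln-b}, and conclude on the event $\{\lambda_{\ell,\rho_x}=\infty\}$, whose probability tends to~$1$ as $x\to\infty$. The paper packages the martingale strong-law step as a standalone result (Lemma~\ref{lem:lln-lemma}) and invokes the escape-probability bound in hypothesis~\ref{thm:martingale-lln-a} directly to obtain $\Pr(\lambda_{\ell,\rho_x}=\infty)\geq 1-\eps$, rather than going through $\kappa_t\to\infty$; your $T_0$-truncation to handle the random shift by $\rho_x$ is a detail the paper leaves implicit.
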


\begin{remarks}
\phantomsection
\begin{myenumi}
\setlength{\itemsep}{0pt plus 1pt}
\item[{\rm (a)}] One cannot deduce~\ref{thm:martingale-lln-lower} directly from~\ref{thm:martingale-lln-upper} in Theorem~\ref{thm:martingale-lln},
as the conditions in the theorem are not symmetric under a sign change. The symmetric part of the proof
is extracted as Lemma~\ref{lem:lln-lemma} below.
\item[{\rm (b)}]
Theorem~\ref{thm:martingale-lln} is inspired by the discrete-time Theorem~3.12.2 of~\cite{mpw} (see also~\cite{mwa}).
That result was stated for a single process satisfying a two-sided drift condition.
 The separation of the upper and lower bounds in Theorem~\ref{thm:martingale-lln} is an improvement essential to our present application:
we obtain the upper and lower bounds in Theorem~\ref{thm:lln} via two
(slightly) different Lyapunov functions, each satisfying only a one-sided drift condition, but sharing a similar quadratic variation estimate required for Theorem~\ref{thm:lln}\ref{thm:martingale-lln-b}.
\end{myenumi}
\end{remarks}

The next lemma is a key ingredient in the proof of Theorem~\ref{thm:martingale-lln}.
As well as Theorem~3.12.2 of~\cite{mpw}, neighbouring results in discrete time include~\cite[Cor.~4]{sheu-yao}.

\begin{lemma}
\label{lem:lln-lemma}
Suppose that $\varphi = (\varphi_t)_{t \in\RP}$ is a $\R$-valued rcll semimartingale.
If there exist  $\alpha \in \R$ and a stopping time  $\tau \in[0,\infty]$, such that 
$( \varphi_{t\wedge \tau} -  \alpha ( t\wedge \tau ) )_{t \in\RP}$ is a submartingale
and the quadratic variation $[\varphi]$ satisfies  $\lim_{t \to \infty}  t^{-\eta} \Exp \bigl( [ \varphi]_{t\wedge\tau} \bigr)  = 0$ for some $\eta \in (0,2)$,
then $\liminf_{t \to \infty} (\varphi_t/t) \geq \alpha$, on the event $\{ \tau = \infty \}$.
\end{lemma}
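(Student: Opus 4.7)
The plan is to reduce the statement to a law of large numbers for a local martingale via the Doob--Meyer decomposition, and then to establish that law through an $L^2$ maximal inequality at dyadic times combined with Borel--Cantelli.

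First, set $\psi_t := \varphi_{t \wedge \tau} - \alpha (t \wedge \tau)$, which by hypothesis is a rcll submartingale. The completeness and right-continuity of $(\cF_t)$ imposed earlier in the paper permit the Doob--Meyer decomposition $\psi_t = \psi_0 + M_t + A_t$, where $M$ is a local martingale with $M_0 = 0$ and $A$ is a predictable non-decreasing process with $A_0 = 0$. Since $A_t \geq 0$ for all $t \in \RP$, on the event $\{\tau = \infty\}$ we then have $\varphi_t \geq \varphi_0 + \alpha t + M_t$, so the claim reduces to showing $M_t / t \to 0$ almost surely on $\{\tau = \infty\}$.

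For the local martingale, I would control its quadratic variation as follows. Since $\alpha(t \wedge \tau)$ is continuous of finite variation, it has zero bracket with any semimartingale, so $[\psi]_t = [\varphi]_{t \wedge \tau}$, and the hypothesis gives $\Exp [\psi]_t = o(t^\eta)$. Let $S_k := \inf\{t \in \RP : |M_t| \vee [M]_t \geq k\}$, a localizing sequence for $M$. Writing $[M] = [\psi] - 2[\psi, A] + [A]$ and using the Kunita--Watanabe-type bound $2 |[\psi, A]_t| \leq [\psi]_t + [A]_t$, together with an estimate of $\Exp [A]_{t \wedge S_k}$ via the submartingale property of $\psi$ and Burkholder--Davis--Gundy applied to $M^{S_k}$, one obtains a bound of the form $\Exp [M]_{t \wedge S_k} \leq C_k \Exp [\varphi]_{t\wedge\tau} = o(t^\eta)$ as $t \to \infty$. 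Fix $\eps > 0$. Doob's $L^2$ maximal inequality applied to the $L^2$-bounded martingale $M^{S_k}$ on $[0, 2^{n+1}]$ then yields
\begin{equation*}
\Pr\Bigl(\sup_{s \leq 2^{n+1}} \lvert M_{s \wedge S_k}\rvert > \eps \cdot 2^n \Bigr) \leq \frac{4 \, \Exp [M]_{2^{n+1} \wedge S_k}}{\eps^2 \cdot 2^{2n}} = O\bigl( 2^{-(2-\eta)n} \bigr),
\end{equation*}
which is summable in $n$ since $\eta < 2$. Borel--Cantelli gives $\sup_{s \in [2^n, 2^{n+1}]} \lvert M_{s \wedge S_k}\rvert/s \to 0$ a.s., hence $M_{t\wedge S_k}/t \to 0$ a.s. Passing to $k \to \infty$, via the tail bound $\Pr(S_k \leq t) \leq \Exp [M]_t / k = o(t^\eta)/k$ (for any fixed $t$) and a diagonal argument along a sequence $t_m \to \infty$, one concludes $M_t/t \to 0$ a.s.\ on $\{\tau = \infty\}$.

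The main technical obstacle is the uniform bound $\Exp [M]_{t \wedge S_k} = O(\Exp [\varphi]_{t \wedge \tau})$: this requires a careful treatment of the covariation $[\psi, A]$ between the semimartingale $\psi$ and the predictable finite-variation part $A$ of its decomposition, together with a localization argument that accommodates the possible global non-integrability of $M$. Once that bound is in hand, the remainder is a standard chaining from dyadic times to all $t$.
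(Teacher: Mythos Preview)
Your skeleton matches the paper's proof exactly: Doob--Meyer decomposition $\psi_t = \psi_0 + M_t + A_t$, reduce to $M_t/t \to 0$, then Doob's $L^2$ maximal inequality along dyadic times plus Borel--Cantelli. The difference is that you miss the one-line observation that makes the argument work, and in trying to work around it you introduce a genuine gap.

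The key point you are missing is that $[M]_t = [\psi]_t = [\varphi]_{t\wedge\tau}$ \emph{directly}. Since $A$ is of finite variation (and, in the paper's treatment, effectively continuous), it contributes nothing to the quadratic variation: $[A]=0$ and $[M,A]=0$, hence $[\psi]=[M]$. This immediately gives $\Exp[M]_t = \Exp[\varphi]_{t\wedge\tau} = o(t^\eta) < \infty$, so $M$ is a genuine $L^2$-martingale on every finite interval, and Doob's maximal inequality applies with no localization whatsoever. Your detour through $[M] = [\psi] - 2[\psi,A] + [A]$, Kunita--Watanabe, and BDG is unnecessary.

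More seriously, your localization/unlocalization step does not close. You obtain $\Exp[M]_{t\wedge S_k} \leq C_k\, o(t^\eta)$ with $C_k$ depending on $k$; this is useless for the asymptotics unless $C_k$ is uniformly bounded, which you do not establish. Then in the ``diagonal argument'' you invoke the tail bound $\Pr(S_k \leq t) \leq \Exp[M]_t/k = o(t^\eta)/k$, but $\Exp[M]_t$ here is the \emph{unlocalized} expectation, which is precisely what you have not controlled---this is circular. And even granting $M_{t\wedge S_k}/t \to 0$ a.s.\ for each fixed $k$, this does not yield $M_t/t \to 0$: when $S_k < \infty$ the statement is trivially $M_{S_k}/t \to 0$ and says nothing about $M_t$ for large $t$. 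Replace all of this with the direct identity $[M]=[\varphi]_{\cdot\wedge\tau}$ and the proof goes through in a few lines, exactly as the paper does.
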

\begin{proof}
By~\cite[Thm.~III.2.16]{protter}, there exist a local martingale $M=(M_t)_{t \geq 0}$ and a non-decreasing process 
$A=(A_t)_{t\geq 0}$, with $A_0=M_0=0$,
satisfying 
$\varphi_{t\wedge\tau}-\alpha (t\wedge \tau)= \varphi_0+M_t+A_t$
for all $t\in\RP$.
Since $A$ has paths of finite variation, we have
$[\varphi]_{t\wedge\tau}=[M]_t$ for all $t\in\RP$ a.s.
Thus
$0\leq \Exp [ M_t^2 ] = \Exp  [ \varphi ]_{t\wedge\tau} 
= o ( t^{\eta} )$ as $t \to \infty$, by assumption.

By Doob's maximal quadratic inequality~\cite[Thm.~I.2.20]{protter}, applied to the martingale $M$,
for all $\delta \in (0,\frac{2-\eta}{2})$ we have:
\[ \Pr \left[ \sup_{s \in [0,t]} M_s^2 \geq 
t^{2-\delta} \right] \leq t^{\delta-2} \Exp \left[ \sup_{s \in [0,t]} M_s^2\right]  \leq 
4 t^{\delta-2}  \Exp [ M_t^2 ]  
= O ( t^{-\delta} ) , \text{ as } t \to \infty .\]
Applied to the sequence of times $t= t_k = 2^k$, the Borel--Cantelli lemma shows that, a.s.,
for all but finitely many $k$,
$\sup_{s \in [0,t_k]} M_s^2 \leq t_k^{2-\delta}$.
Every $t \geq 1$ has $t_k \leq t < t_{k+1}$ for some $k = k(t) \in \ZP$, 
with $\lim_{t \to \infty} k(t) = \infty$, 
so that, a.s., for all $t$ sufficiently large,
\[ \sup_{s \in [0,t]} M_s^2 \leq \sup_{s \in [0,t_{k+1}]} M_s^2 \leq t_{k+1}^{2-\delta}
\leq 4 t^{2-\delta} ,\]
since $t_{k+1} = 2 t_k \leq 2 t$. Thus we conclude that
\begin{equation}
\label{eq:martingale-limit}
 \lim_{t \to \infty} t^{-1} M_t =0 , \as 
\end{equation}
Since $A_t\geq 0$ for all $t\in\RP$, the limit in~\eqref{eq:martingale-limit} implies
\[ \liminf_{t \to \infty} \frac{\varphi_t}{t}
= \liminf_{t \to \infty} \frac{\varphi_{t\wedge\tau}}{t}
 \geq   \liminf_{t \to \infty} \frac{M_t+\alpha (t\wedge\tau)}{t} = \alpha, \text{ on } \{ \tau = \infty \} ,\]
which completes the proof of the lemma.
\end{proof}

Now we can complete the proof of Theorem~\ref{thm:martingale-lln}.

\begin{proof}[Proof of Theorem~\ref{thm:martingale-lln}.]
First we prove part~\ref{thm:martingale-lln-lower}. 
Let $\eps >0$. 
Take $\ell$ as specified in the hypotheses of part~\ref{thm:martingale-lln-lower} of the theorem.
Then condition~(a) says that we may choose $x > \ell$ such that~\eqref{eq:escape-probability} holds; fix this $x$. Note that $\rho_x < \infty$, a.s., by~(a). 
Write $\varphi_t := f(\kappa_{\rho_x+t})$ for $t \in \RP$. Set
$\tau = \lambda_{\ell,\rho_x} - \rho_x$.
By the hypothesis of part~\ref{thm:martingale-lln-lower},
the submartingale $\zeta^{(f,\theta)}$, defined in~\eqref{eq:zeta-def} (with 
$r=\infty$), satisfies  
\[
\zeta^{(f,\theta)}_t = \varphi_{t \wedge \tau} - \theta  (t \wedge \tau),   \text{ for all } t\in\RP.   \]
Thus we may apply Lemma~\ref{lem:lln-lemma} with $\alpha = \theta$ to conclude that
$\liminf_{t \to \infty} ( \varphi_t / t) \geq \theta$ on $\{ \tau = \infty \}$.
In other words, since $f(\kappa_t) \geq 0$,
\begin{align*} 
\liminf_{t \to \infty} \frac{f(\kappa_t)}{t} \1 { \lambda_{\ell,\rho_x} = \infty } & =
\liminf_{t \to \infty} \frac{f(\kappa_{\rho_x+t})}{\rho_x +t} \1 { \rho_x < \infty, \,  \lambda_{\ell,\rho_x} = \infty} \\
& 
= \liminf_{t \to \infty} ( \varphi_t / t) \1 { \tau = \infty }
\geq \theta \1 {  \lambda_{\ell,\rho_x} = \infty} , \as 
\end{align*}
since $\Pr(\rho_x < \infty)=1$. Hence, by~\eqref{eq:escape-probability} applied at $T = \rho_x$, we obtain
\begin{align*}
     \Pr \Bigl( \liminf_{t \to \infty} \frac{f(\kappa_t)}{t} \geq \theta \Bigr)
     & = \Exp \Bigl[ 
     \Pr \Bigl( \liminf_{t \to \infty} \frac{f(\kappa_t)}{t} \geq \theta \Bigmid \cF_{\rho_x} \Bigr)
     \1 { \rho_x < \infty } \Bigr] \\
     & \geq \Pr \Bigl[ \Pr \bigl( \lambda_{\ell, \rho_x} = \infty 
    \bigmid \cF_{\rho_x} \bigr)
     \1 { \rho_x < \infty } \Bigr] \\
     & \geq (1-\eps) \Pr ( \rho_x < \infty ) = 1-\eps.
\end{align*}
Since $\eps>0$ was arbitrary, we obtain part~\ref{thm:martingale-lln-lower}. 
The argument for part~\ref{thm:martingale-lln-upper} is similar:
define $\zeta^{(f,\theta)}$ by~\eqref{eq:zeta-def} and set $\varphi_t := -f(\kappa_{\rho_x+t})$ and $\alpha :=  -\theta$.  Apply Lemma~\ref{lem:lln-lemma} to the $\R$-valued submartingale $-\zeta^{(f,\theta)}$ to conclude the proof.
\end{proof}

\section{Exit from a bounded set by the reflecting diffusion}
\label{sec:exit}

In this section we provide some estimates for a process
$Z = (X, Y)$ satisfying~\eqref{eq:SDE-for-Z}
up to explosion time $\taue$ (see Appendix~\ref{sec:construction} for a rigorous definition of such processes).
Denote by $(\cF_t)_{t\in\RP}$ the filtration of the driving Brownian motion in~\eqref{eq:SDE-for-Z} and recall that $(\cF_t)$-stopping times 
$\sigma_r = \inf \{ t \in \RP : X_t \geq r \}$
and $\taue$
satisfy
$\taue = \lim_{r \to \infty} \sigma_r$.
Recall that the domain $\cD$ is defined by~\eqref{eq:domain-def}.
The main aim of this section is 
prove Theorem~\ref{thm:exit}. In particular, we 
show that~$\sup_{z\in\cD}\Exp_z \sigma_r < \infty$, which is an important ingredient in the proof of (fixed-$z$ large-$r$) asymptotics given in Theorem~\ref{thm:lln} above.

\begin{theorem}
\label{thm:exit}
Suppose that~\eqref{ass:domain1}, \eqref{ass:domain2}, \eqref{ass:variance}, and~\eqref{ass:vector-field} hold. 
Then for every $r \in \RP$, 
$\sup_{z\in\cD} \Exp_z \sigma_r < \infty$.
Moreover, $\Pr_z ( \limsup_{t \uparrow \taue} X_t = +\infty ) = 1$ for all $z \in \cD$.
\end{theorem}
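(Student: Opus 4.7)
The plan is to deduce both assertions from a uniform expected exit-time bound $\sup_{z\in\cD}\Exp_z\sigma_r<\infty$ for every $r>0$. Granted this, $\sigma_r<\infty$ almost surely for every~$r$, and since $\taue=\lim_{r\to\infty}\sigma_r$, the horizontal coordinate crosses every finite level strictly before~$\taue$, yielding $\limsup_{t\uparrow\taue}X_t=+\infty$ almost surely.

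For the exit-time bound, I would build a $C^2$ function $V:\cD\to\RP$, bounded on $\cD_r:=\cD\cap\{x\le r\}$, such that It\^o's formula applied along~\eqref{eq:SDE-for-Z} gives
\[
dV(Z_t)=dN_t+\mu^o(Z_t)\,dt+\mu^b(Z_t)\,dL_t,
\]
with $N$ a local martingale, $\mu^o\le-\delta$ on $\cD_r$, and $\mu^b\le 0$ on $\pcD\cap\cD_r$, for some $\delta>0$. Optional stopping at $\sigma_r\wedge t$, after the standard localization around $\taue$ provided by Theorem~\ref{thm:existence}, then yields $\delta\,\Exp_z(\sigma_r\wedge t)\le V(z)+\sup_{\cD_r}|V|$, and sending $t\to\infty$ concludes.

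The construction of $V$ naturally splits $[0,r]$ into three regimes in~$x$. Near the origin, \eqref{ass:domain1} gives $b'(x)\to\infty$, so the inward normal $n_x(u)=(b'(x),-u)/\sqrt{1+b'(x)^2}$ tends to $e_x$; together with~\eqref{ass:vector-field} this yields $\langle\phi_x(u),e_x\rangle\ge c_1>0$ for $x\le x_1$, some $x_1>0$. Far from the origin, \eqref{ass:lln} gives $\langle\phi_x(u),e_x\rangle\to s_0>0$ uniformly for $x\ge x_2$, some $x_2<\infty$. In either outer regime, taking $V(z)=G(x)$ for a smooth, decreasing, strictly concave $G$ works: $\mu^o=\tfrac12 G''(x)\Sigma_{xx}(z)\le-\delta$ by ellipticity~\eqref{ass:variance}, and $\mu^b=G'(x)\langle\phi_x(u),e_x\rangle\le 0$ by the bounds above. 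On the compact intermediate range $[x_1,x_2]$ only the normal component of~$\phi$ is controlled and $\langle\phi_x(u),e_x\rangle$ may have either sign, so I would add a correction $V\mapsto V+\lambda H$ with $H(z)=-\chi(x)\bigl(b(x)^2-\|y\|_d^2\bigr)$ for a smooth cutoff $\chi$ supported in a slight enlargement of~$[x_1,x_2]$. On the boundary $\nabla\bigl(b(x)^2-\|y\|_d^2\bigr)=2b(x)\sqrt{1+b'(x)^2}\,n_x(u)$ points along the inward normal, so~\eqref{ass:vector-field} yields $\langle\nabla H,\phi\rangle\le-2\chi(x)b(x)c\le 0$ on $\pcD$. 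Choosing $\lambda>0$ large enough that this dominates $|G'(x)\langle\phi,e_x\rangle|$ on the intermediate range forces $\mu^b\le 0$ globally; the bounded perturbation $\tfrac12\lambda\,\mathrm{tr}(\Sigma\nabla^2 H)$ on the compact intermediate set is absorbed by making $G$ sufficiently concave there.

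The main obstacle is the intermediate regime, where the only control on~$\phi$ is the positive normal component; this is circumvented by the $H$-correction, which exploits~\eqref{ass:vector-field} to supply the needed negative boundary contribution. The potential corner singularity at~$(0,0)$ (where $b(0)=0$) is handled by~\eqref{ass:domain1}: by Remark~\ref{rem:b-small-x}, $\cD$ is a $C^2$ domain and the process cannot be trapped at the corner, so $V$ can be taken $C^2$ globally on $\cD$. Once the uniform bound on $\Exp_z\sigma_r$ is established, the a.s.\ statement about $\limsup_{t\uparrow\taue}X_t$ is immediate as outlined above.
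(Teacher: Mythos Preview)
Your strategy---building a single Lyapunov function $V$ with strictly negative interior drift and non-positive boundary drift, then applying optional stopping---is genuinely different from the paper's. The paper does not attempt a global Lyapunov function on $\cD_r$; instead it splits the dynamics near and away from $\pcD$ into three lemmas: an It\^o-formula estimate on the squared distance $D(z)$ to $\pcD$ shows quick escape from a thin boundary layer; a Dirichlet problem on a smoothed compact sub-domain, together with the strong maximum principle, gives a uniformly positive probability of hitting level~$r$ before re-entering the layer; and a submartingale bound on $X_t^2$ controls the time spent in compact interior subsets. These are stitched together by a geometric-trials argument. Your route is more direct and avoids the PDE ingredient, at the cost of a more delicate piecewise construction of $V$.

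There is, however, a concrete gap: you invoke \eqref{ass:lln} to obtain $\langle\phi_x(u),e_x\rangle\to s_0>0$ in your ``far'' regime, but Theorem~\ref{thm:exit} does \emph{not} assume \eqref{ass:lln}---only \eqref{ass:domain1}, \eqref{ass:domain2}, \eqref{ass:variance}, \eqref{ass:vector-field}. Under those hypotheses alone you have no sign control on $\langle\phi_x(u),e_x\rangle$ away from a neighbourhood of the origin, so your three-regime split is not available. The fix is simple once noticed: since $r$ is fixed, drop the ``far'' regime entirely and let the normal-direction correction $\lambda H$ act on the whole of $[x_1,r]$; take $\chi$ smooth with $\chi\equiv0$ near~$0$ and $\chi\equiv1$ on $[x_1,r]$, and allow $\lambda$ and the concavity of $G$ to depend on~$r$. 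With that modification your construction goes through under the stated hypotheses. Your deduction of the $\limsup$ claim from finiteness of each $\sigma_r$ (via $\sigma_r\uparrow\taue$ and $X_{\sigma_r}\geq r$) is correct and arguably cleaner than the paper's separate renewal-style argument for that part.
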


\begin{remarks}
\label{rems:exit}
\phantomsection
\begin{myenumi}[label=(\alph*)]
\setlength{\itemsep}{0pt plus 1pt}
\item
Theorem~\ref{thm:existence} gives existence and uniqueness of the process $Z$ under the conditions of Theorem~\ref{thm:exit}, and, moreover, implies that~$\lim_{t \uparrow \taue} X_t = \infty$ on the event $\{\taue < \infty\}$. 
However, on the event $\{ \taue = \infty \}$ it may be the case that
$\liminf_{t \uparrow \taue} X_t < \infty$, if the process is recurrent. The assumptions of 
Theorem~\ref{thm:exit} permit 
recurrence but guarantee that the process $Z$ is almost surely not confined, since 
$\Pr_z ( \limsup_{t \uparrow \taue} X_t = +\infty ) = 1$.
\item
In the proof of Theorem~\ref{thm:exit}, 
it suffices
to work with the process on a compact set. Thus, for existence and uniqueness of solutions of~\eqref{eq:SDE-for-Z} up to time~$\sigma_r$, the bounded-domain results of~\cite{ls} 
are in fact sufficient:
see Section~\ref{sec:construction} for existence and uniqueness theory for SDE~\eqref{eq:SDE-for-Z} on non-compact domains.
\end{myenumi}
\end{remarks}

The proof of Theorem~\ref{thm:exit}
has three main ingredients.
First, we show that starting very close to the boundary the process
moves a positive distance into the interior in a short time (Lemma~\ref{lem:exit1} below),
and once away from the boundary, it has positive
probability of reaching horizontal distance~$r$ 
before getting too close to the boundary again (Lemma~\ref{lem:exit2}). Moreover,
 the process cannot spend a long time in a bounded subset of the interior (Lemma~\ref{lem:exit3}).
We start with a preliminary smoothness result.

\begin{lemma}
\label{lem:C2-domain}
If Assumption~\eqref{ass:domain1} holds, then~$\cD$ is a $C^2$ domain.
\end{lemma}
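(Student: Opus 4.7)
The plan is to verify that every boundary point of $\cD$ admits a local $C^2$ defining function, which is the standard notion of a $C^2$ domain. Since $b(0)=0$, the boundary $\pcD$ decomposes naturally into its portion over $\{x>0\}$ (where $\|y\|_d = b(x) > 0$) and the single point $(0,0)$; these will be treated separately.

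For a boundary point $(x_0, y_0) \in \pcD$ with $x_0 > 0$, the function $\psi(x,y) := b(x)^2 - \|y\|_d^2$ is $C^2$ on a neighbourhood of $(x_0, y_0)$ by \eqref{ass:domain1}, and its gradient $\nabla \psi(x_0, y_0) = (2 b(x_0) b'(x_0), -2 y_0)$ has its $y$-block $-2 y_0$ nonzero because $\|y_0\|_d = b(x_0) > 0$. Hence $\psi$ serves as a valid local $C^2$ defining function and $\cD$ is $C^2$ there.

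At the origin, my defining function will be $\Psi(x,y) := x - G(\|y\|_d^2)$, where $G$ is the inverse of $H(x) := b(x)^2$ on some right-neighbourhood of $0$. Since $y \mapsto \|y\|_d^2$ is $C^\infty$, $\Psi$ is $C^2$ near $(0,0)$ as soon as $G$ is $C^2$ at $0$, and the gradient $\nabla \Psi(0,0) = e_x$ is nonzero. By the inverse function theorem, $G$ is $C^2$ near $0$ provided $H$ extends to a $C^2$ function on some $[0,\delta)$ with $H'(0) > 0$; this reduces the entire lemma to a one-dimensional computation about the function $H$.

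The crux is therefore to establish that both
\[
\lim_{x \to 0} H'(x) = 2 \lim_{x \to 0} b(x) b'(x) \qquad\text{and}\qquad \lim_{x \to 0} H''(x) = 2\lim_{x \to 0} \bigl(b'(x)^2 + b(x) b''(x)\bigr)
\]
exist, with the first limit strictly positive. Condition~(i) of \eqref{ass:domain1}, combined with $b(x) \to 0$, forces $b'(x) \to \infty$ as $x \to 0$. Condition~(ii) can be rewritten as $\frac{\ud}{\ud x} (1/b'(x)^2) = - 2 b''(x)/b'(x)^3 \to -2c \geq 0$; integrating against the boundary value $1/b'(x)^2 \to 0$ gives $1/b'(x)^2 = -2cx + o(x)$ as $x \to 0$. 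In the regime $c < 0$ this yields the sharp asymptotics $b(x) \sim \sqrt{2x/|c|}$ and $b'(x) \sim 1/\sqrt{2|c|x}$, whence $H'(x) \to 2/|c|>0$. For $H''$, the summands $b'(x)^2 \sim 1/(2|c|x)$ and $b(x) b''(x) \sim -1/(2|c|x)$ have cancelling $1/x$ singularities, and a second-order expansion based on (ii) will pin down a finite value for $H''(0)$. I expect the main obstacle to be precisely this delicate cancellation at leading order in $H''$, together with handling the boundary case $c = 0$, where condition~(i) plays the essential role of ruling out pathological growth of $b(x) b'(x)$ while still yielding a $C^2$ extension.
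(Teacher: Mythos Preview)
Your plan to show that $H(x)=b(x)^2$ extends to a $C^2$ function on $[0,\delta)$ with $H'(0)>0$ cannot succeed under~\eqref{ass:domain1} alone. Take $b(x)=x^{1/3}$: then $b(x)b'(x)=\tfrac13 x^{-1/3}\to\infty$ so~(i) holds, and $b''(x)/b'(x)^3=-6x^{1/3}\to 0$ so~(ii) holds with $c=0$, yet $H(x)=x^{2/3}$ is not even $C^1$ at~$0$ and $G(s)=s^{3/2}$ is not $C^2$ at~$0$. Even when $c<0$, the ``second-order expansion based on~(ii)'' you hope for is not available: condition~(ii) only asserts that $b''/b'^3\to c$, with no rate. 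Writing $f=b^{-1}$ (so $f''(r)=-c+\eta(r)$ with $\eta\to 0$), one computes $b'^2+bb''=[f'(r)-rf''(r)]/f'(r)^3$ with $r=b(x)$, and the numerator equals $\int_0^r\eta-r\eta(r)$; taking e.g.\ $\eta(r)=\sqrt{r}$ gives a valid $b$ satisfying~(i)--(ii) for which $H''\to-\infty$. So the cancellation you flag is genuinely unresolvable at this level of generality.

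The paper's route is to observe that your $G(\|y\|_d^2)$ equals $b^{-1}(\|y\|_d)$, and to work with $b^{-1}$ directly: from $(b^{-1})'=1/b'$ and $(b^{-1})''=-b''/b'^3$, conditions~(i)--(ii) give immediately that $b^{-1}$ is $C^2$ on $[0,\delta)$ with $(b^{-1})'(0)=0$ and $(b^{-1})''(0)=-c\in[0,\infty)$. Although $\|y\|_d$ is not $C^2$ at $0$, the vanishing first derivative $(b^{-1})'(0)=0$ makes $y\mapsto b^{-1}(\|y\|_d)$ a $C^2$ function: its second partials are $(b^{-1})''(r)\,y_iy_j/r^2 + (b^{-1})'(r)r^{-1}[\delta_{ij}-y_iy_j/r^2]$, and since $(b^{-1})'(r)/r\to -c$ the direction-dependent pieces cancel to give the limit $-c\,\delta_{ij}$. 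Thus $\Psi(x,y)=x-b^{-1}(\|y\|_d)$ is a valid $C^2$ defining function at the origin with $\nabla\Psi(0,0)=e_x\ne 0$. You should replace your $H$-based reduction with this direct argument about $b^{-1}$.
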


The lemma follows from assumption~\eqref{ass:domain1},
since 
  $\lim_{x \to 0} b(x) =0$, $\lim_{x\to0}b'(x)=\infty$ and $b' (x) > 0$ for all $x>0$ sufficiently small (see Remark~\ref{rem:b-small-x}) imply that 
the inverse $b^{-1}$ 
is twice
continuously differentiable in a neighbourhood of the origin and satisfies 
\[ \frac{\ud}{\ud s} b^{-1} (s) = \frac{1}{b' ( b^{-1} (s) )}, ~~~
\frac{\ud^2}{\ud s^2} b^{-1} (s) = - \frac{b'' ( b^{-1} (s))}{(b' ( b^{-1} (s) ))^3} \quad\&\quad \lim_{s\to0} b^{-1}(s)=0.\]
Hence $\lim_{s \to 0} (\ud/\ud s) b^{-1} (s) = 0$
and $\lim_{s \to 0} (\ud^2/\ud s^2) b^{-1} (s)$ exists in $\RP$,
making $b^{-1}$ twice continuously differentiable at $0$, thus implying Lemma~\ref{lem:C2-domain} ($\partial \cD\setminus\{0\}$ is clearly $C^2$). 

The shortest squared distance of $z \in \cD$ from $\partial\cD$, defined as
\begin{equation}
    \label{eq:D-def}
    D (z) := \inf_{z' \in \partial \cD}  \| z - z'\|_{d+1}^2,
\end{equation}
is a $C^2$ function on $\cD$ under the assumption of Lemma~\ref{lem:C2-domain}.
Denote its Hessian (i.e. a non-negative definite matrix of the second partial derivatives of $D$) by $H_D(z)$.

\begin{lemma}
\label{lem:D-Hessian}
Suppose that~\eqref{ass:domain1}, \eqref{ass:domain2},
\eqref{ass:variance}, and~\eqref{ass:vector-field}  hold.
Then for any $r \in (0,\infty)$,
there exists $h_r \in (0,\infty)$ 
(depending on $\sup_{z \in \cD} \| \Sigma (z) \|_{\rm op}$ and $\delta$ as well as $r$) such that 
\begin{equation}
\label{eq:trace-bound}
  \trace \bigl[ \Sigma(z)  H_D (z) \bigr] \geq \delta , \text{ for all } z = (x,y) \in \cD \text{ with } D(z) \leq h_r^2 
  \text{ and } x \in [0,r].
\end{equation}
Moreover, the gradient  of $D$ vanishes on the boundary: $\nabla D(z)=0$ for all $z\in\pcD$.
\end{lemma}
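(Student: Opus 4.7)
The plan is to exploit the $C^2$ regularity of $\cD$ (established in Lemma~\ref{lem:C2-domain}) by writing $D$ in terms of the signed distance function in a tubular neighbourhood of $\pcD$, and then combine the uniform ellipticity from~\eqref{ass:variance} with a curvature estimate valid on the compact boundary piece $\{ (x,y) \in \pcD : x \in [0,r]\}$.

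First I would set up the local analysis. Since $\cD$ is a $C^2$ domain, for every $z_0 \in \pcD$ there is a neighbourhood on which the signed distance function $\phi$ (taken positive inside $\cD$) is $C^2$, with $\nabla \phi(z_0) = n(z_0)$, the inward unit normal. In this neighbourhood, for $z \in \cD$, the nearest-point projection onto $\pcD$ is uniquely defined and $D(z) = \phi(z)^2$. Consequently
\[
\nabla D(z) = 2 \phi(z) \nabla \phi(z) , \qquad H_D(z) = 2 (\nabla \phi)(\nabla \phi)^{\tra} + 2 \phi(z) H_\phi(z).
\]
Setting $\phi(z) = 0$ gives $\nabla D(z) = 0$ for every $z \in \pcD$, which proves the second assertion.

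Next I would derive the trace lower bound. At a generic $z$ in the tubular neighbourhood, the first term of $H_D$ yields, using $\|\nabla \phi\|_{d+1}=1$ and the uniform ellipticity constant $\delta$ from~\eqref{ass:variance},
\[
\trace\bigl[\Sigma(z) \cdot 2(\nabla \phi)(\nabla \phi)^{\tra}\bigr] = 2 (\nabla \phi)^{\tra} \Sigma(z) (\nabla \phi) \geq 2 \delta .
\]
The residual term satisfies $|2\phi(z) \trace[\Sigma(z) H_\phi(z)]| \leq 2 |\phi(z)| (d+1) \|\Sigma(z)\|_{\rm op} \|H_\phi(z)\|_{\rm op}$.

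The key technical step is to show that $\|H_\phi(z)\|_{\rm op}$ is uniformly bounded for $z \in \cD$ near the boundary piece $\{(x',y') \in \pcD : x' \in [0, r+1]\}$. This uses the $C^2$ smoothness of $\pcD$ together with compactness: the boundary piece $\partial\cD \cap \{x \in [0, r+1]\}$ is compact (by continuity and positivity of $b$ on $(0, r+1]$ and the $C^2$ regularity at the origin guaranteed by~\eqref{ass:domain1} via Lemma~\ref{lem:C2-domain}), so the principal curvatures are uniformly bounded there, and hence so is $H_\phi$ in a tubular neighbourhood --- the standard identity $H_\phi(z) = -H_\phi(\pi(z))(I - \phi(z) H_\phi(\pi(z)))^{-1}$ at the projection $\pi(z)$ shows $\|H_\phi(z)\|_{\rm op} \leq K_r$ for some $K_r < \infty$ whenever $|\phi(z)|$ is sufficiently small. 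Combined with the boundedness of $\Sigma$, this yields a constant $M_r$ (depending on $r$, $\delta$, and $\sup_{z \in \cD} \|\Sigma(z)\|_{\rm op}$) such that
\[
|2\phi(z) \trace[\Sigma(z) H_\phi(z)]| \leq M_r \sqrt{D(z)} ,
\]
for every $z = (x,y) \in \cD$ with $x \in [0,r]$ and $\sqrt{D(z)}$ sufficiently small to keep the closest boundary point in $[0, r+1]$ (which certainly holds whenever $\sqrt{D(z)} \leq 1$). Choosing $h_r := \min\{1, \delta / M_r\}$ gives $\trace[\Sigma(z) H_D(z)] \geq 2\delta - \delta = \delta$ on the required set.

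The main obstacle is the control of $H_\phi$ near $\pcD$: one must make sure the compactness-plus-$C^2$ argument applies uniformly over $x \in [0, r]$, including near $x = 0$ where $b'(x) \to \infty$ but assumption~\eqref{ass:domain1}(ii) still guarantees $C^2$ regularity via Lemma~\ref{lem:C2-domain}. Once this local curvature bound is in hand, the trace inequality follows by straightforward algebra from the decomposition of $H_D$.
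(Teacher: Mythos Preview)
Your proposal is correct and follows essentially the same route as the paper: both arguments write $D=\phi^2$ for the signed distance $\phi$, obtain $\nabla D=2\phi\nabla\phi$ (hence vanishing on $\pcD$) and $H_D=2(\nabla\phi)(\nabla\phi)^\tra+2\phi H_\phi$, then combine the uniform ellipticity bound $2(\nabla\phi)^\tra\Sigma\nabla\phi\ge 2\delta$ with a curvature/compactness estimate on the boundary slab $\{x\le r+1\}$ to absorb the $2\phi H_\phi$ term. Your write-up is in fact more explicit than the paper's sketch, which simply asserts that $\bigl\|H_D(z)-2\nabla D(z)\nabla D(z)^\tra/\|\nabla D(z)\|^2\bigr\|_{\rm op}\to 0$ as $z\to\pcD$ and invokes ellipticity.
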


It is intuitively clear that the gradient of the distance function $D:\cD\to\RP$ near the boundary $\pcD$
points in the normal direction to the boundary. In fact, it is not hard to see that the magnitude of the gradient equals $2D(z)$, implying $\nabla D(z)=0$ for all $z\in\pcD$. Elementary, but somewhat tedious calculations show that 
$$\left\|H_D(z)-2\frac{\nabla D(z)(\nabla D(z))^\tra}{\|\nabla D(z)\|^2_{d+1}}\right\|_{\rm{op}} \to 0, \text{ as 
$z$ approaches $\pcD$,}$$
which, together with the uniform ellipticity in assumption~\eqref{ass:variance}, implies~\eqref{eq:trace-bound}.
The routine calculations are omitted.

Recall definition~\eqref{eq:sigma-def} of  $\sigma_r$. 
Define 
$D_t := D (Z_t)$, for $0 \leq t < \taue$,
and, for $h \in (0,\infty)$, 
\[ \tau_{h} := \inf \{ t \in \RP : D_t \leq h^2 \}, ~\text{and}~ \upsilon_h := \inf \{ t \in \RP : D_t \geq h^2 \} .\]

\begin{lemma}
\label{lem:exit1}
Suppose that~\eqref{ass:domain1}, \eqref{ass:domain2},
\eqref{ass:variance}, and~\eqref{ass:vector-field}  hold, with~$\delta>0$ the constant in~\eqref{ass:variance}. 
Then for any $r \in (0,\infty)$,
there exists $h_r \in (0,\infty)$ such that for all $h \in (0, h_r)$,
\[ \sup_{z \in \cD} \Exp_z \left[  \upsilon_h \wedge \sigma_r \right] \leq \frac{2 h^2}{\delta} .\]
\end{lemma}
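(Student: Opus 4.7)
The plan is to apply Itô's formula to the squared distance $D(Z_t)$ and exploit the three nice properties it enjoys on $\cD$: (i) by Lemma~\ref{lem:D-Hessian}, $\nabla D \equiv 0$ on $\pcD$, which kills the local-time term; (ii) by the same lemma, $\trace[\Sigma(z) H_D(z)] \geq \delta$ whenever $D(z) \leq h_r^2$ and $x \in [0,r]$; and (iii) $D$ is $C^2$ on $\cD$, with $\nabla D$ bounded on $\{z \in \cD : x \leq r,\, D(z) \leq h_r^2\}$. The choice of $h_r$ will be the one supplied by Lemma~\ref{lem:D-Hessian}, and I may assume $D(z) \leq h^2$ (otherwise $\upsilon_h = 0$ and there is nothing to prove).

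Writing $Z$ via the SDE~\eqref{eq:SDE-for-Z} and applying Itô's formula up to time $t \wedge \upsilon_h \wedge \sigma_r$, I obtain
\[
D(Z_{t \wedge \upsilon_h \wedge \sigma_r}) = D(z) + M_t + \int_0^{t \wedge \upsilon_h \wedge \sigma_r} \nabla D(Z_s)^\tra \phi(Z_s)\, \ud L_s + \frac{1}{2}\int_0^{t \wedge \upsilon_h \wedge \sigma_r} \trace[\Sigma(Z_s) H_D(Z_s)]\, \ud s,
\]
where $M_t := \int_0^{t \wedge \upsilon_h \wedge \sigma_r} \nabla D(Z_s)^\tra \Sigma^{1/2}(Z_s)\, \ud W_s$. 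The local-time integral vanishes: since $L$ grows only when $Z_s \in \pcD$ and $\nabla D \equiv 0$ on $\pcD$, the integrand is identically zero on the support of $\ud L$. The integrand of $M$ is bounded on the stopped region by (iii) and assumption~\eqref{ass:variance}, so $M$ is a genuine $L^2$-martingale with zero mean. On $[0, \upsilon_h \wedge \sigma_r)$ we have $D_s < h^2 \leq h_r^2$ and $X_s \leq r$, so Lemma~\ref{lem:D-Hessian} applies and the drift term is bounded below by $\tfrac{\delta}{2}(t \wedge \upsilon_h \wedge \sigma_r)$.

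Taking expectations and using $D(Z_{t \wedge \upsilon_h \wedge \sigma_r}) \leq h^2$ (by continuity of $D_t$ along $Z$ and the definition of $\upsilon_h$) together with $D(z) \geq 0$, I arrive at
\[
h^2 \geq \Exp_z D(Z_{t \wedge \upsilon_h \wedge \sigma_r}) \geq \frac{\delta}{2} \Exp_z [t \wedge \upsilon_h \wedge \sigma_r].
\]
Letting $t \to \infty$ and invoking monotone convergence then gives $\Exp_z [\upsilon_h \wedge \sigma_r] \leq 2h^2/\delta$, uniformly in the starting point $z \in \cD$, as required.

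The main point requiring care is (i), the vanishing of the local-time integral, which hinges on the somewhat delicate fact, asserted in Lemma~\ref{lem:D-Hessian}, that $\nabla D \equiv 0$ on $\pcD$; everything else is a standard Itô computation, with the only other subtleties being the trivial observation that we may assume $D(z) \leq h^2$ and the verification that $M$ is a true martingale, both routine.
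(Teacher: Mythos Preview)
Your proof is correct and follows essentially the same route as the paper's: apply It\^o's formula to $D(Z_t)$, use Lemma~\ref{lem:D-Hessian} to kill the local-time term and to bound the drift below by $\delta/2$ on the stopped region, verify the stochastic integral is a true martingale via the boundedness of $\nabla D$ and $\Sigma^{1/2}$ there, take expectations, and conclude by monotone convergence. The only cosmetic difference is that the paper phrases the drift bound as ``$D_{t\wedge\tau}-\tfrac{\delta}{2}(t\wedge\tau)$ is a submartingale'' before applying optional stopping, whereas you take expectations directly; the content is identical.
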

 \begin{proof}
Recall that $D_t = D ( Z_t )$. It suffices to assume~$D_0 \leq h^2$. 
For $0 \leq t < \taue$, by It\^o's formula and~\eqref{eq:SDE-for-Z},
\[ \ud D_t = 
  \nabla D (Z_t)^\tra  \Sigma^{1/2} (Z_t ) \ud W_t +
\frac{1}{2} \trace \bigl[ \Sigma (Z_t) H_D  (Z_t) \bigr] \ud t, \]
where $H_D (z)$ is the Hessian of~$D$,
since $\langle \nabla D (z) , \phi (z) \rangle = 0$ for all $z \in \pcD$ by Lemma~\ref{lem:D-Hessian}.
Also by Lemma~\ref{lem:D-Hessian}, we have that
$\trace  [ \Sigma  (z) H_D  (z)  ] \geq \delta$
whenever $z = (x,y) \in \cD$ has $x \leq r$ and $D(z) \leq h^2 \leq h_r$.
Thus if $\tau = \upsilon_h \wedge \sigma_r$,
we have that for $0 \leq s \leq t$, 
\[ D_{t \wedge \tau} - D_{s \wedge \tau} \geq M_{t \wedge \tau} - M_{s \wedge \tau} + (\delta/2) (t \wedge \tau - s \wedge \tau ) ,\]
where $M$ is a martingale with $[ M ]_{t \wedge \tau} \leq C t$, for a constant $C < \infty$, 
using the fact that $\| \Sigma^{1/2} (z) \|_{\rm op}$ is bounded, by~\eqref{ass:variance}, and that $\| \nabla D (z) \|$ is bounded
on bounded subsets of $\cD$, by the continuity of the gradient. 
It follows that  $D_{t \wedge \tau} - \frac{\delta}{2} (t \wedge \tau)$ is a submartingale.
By optional stopping applied at the bounded stopping time $t \wedge \tau$,
and since $0 \leq D_{t \wedge \upsilon_h} \leq h^2$ for all $t\geq0$, 
 we get 
\[ \frac{\delta}{2} \Exp_z [ t \wedge \upsilon_h \wedge \sigma_r ] \leq  
\Exp_z \left[ D_{t \wedge \upsilon_h \wedge \sigma_r} - D_0 \right]
\leq h^2 . \]
Then, by monotone convergence, 
$\Exp_z [  \upsilon_h \wedge \sigma_r ]
= \lim_{t \to \infty} \Exp_z [ t \wedge \upsilon_h \wedge \sigma_r] \leq 2 h^2/\delta$.
\end{proof}

The next result shows that, starting at distance at least~$h$ from~$\partial\cD$, the  probability of crossing level~$r$ before getting within distance~$h' \in (0,h)$ of~$\partial\cD$ 
is uniformly positive. 

\begin{lemma}
\label{lem:exit2}
Suppose that~\eqref{ass:domain1}, \eqref{ass:variance}, and~\eqref{ass:vector-field}  hold. 
Fix $r \in \RP$ and $0 < h' < h < \infty$. Then there exists $\eps >0$ such that
\[   \Pr_z \left( \sigma_r < \tau_{h'}  \right) \geq \eps, \text{ for all } z \in \cD \text{ with } D(z) \geq h^2 .\]
\end{lemma}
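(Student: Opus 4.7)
The plan is to reduce the problem to one about an unconstrained elliptic diffusion on a compact set, establish pointwise positivity via a support-theorem argument, and then promote this to a uniform lower bound using compactness and elliptic regularity. First, the case $x\geq r$ is immediate because then $\sigma_r=0$ while $D(z)\geq h^2>h'^2$ gives $\tau_{h'}>0$; attention therefore restricts to the compact set $K_0:=\{z'\in\cD: x'\leq r,\, D(z')\geq h^2\}$. Setting $\tau:=\sigma_r\wedge\tau_{h'}$ and noting that $D(Z_t)>h'^2>0$ on $[0,\tau)$ forces $Z_t\in\cD\setminus\pcD$, the local-time term in~\eqref{eq:SDE-for-Z} is dormant there and $Z$ satisfies the unreflected, uniformly elliptic SDE $\ud Z_t=\Sigma^{1/2}(Z_t)\ud W_t$ with bounded Lipschitz coefficients by~\eqref{ass:variance}. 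Extending $\Sigma$ to $\R^{d+1}$ in a bounded, globally Lipschitz, uniformly elliptic fashion and letting $\widetilde Z$ denote the corresponding strong solution driven by the same~$W$, pathwise uniqueness yields $Z_t=\widetilde Z_t$ for all $t\leq\tau$. The task thus reduces to bounding $v(z):=\Pr_z(\widetilde\sigma_r<\widetilde\tau_{h'})$ from below, uniformly in $z\in K_0$, for the unconstrained diffusion~$\widetilde Z$.

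For pointwise positivity I would invoke the Stroock--Varadhan support theorem. Given $z\in K_0$, one constructs a smooth deterministic path starting at $z$, ending in $\{x\geq r+1\}$ and remaining inside the open set $\{D>(h^2+h'^2)/2\}\cap\cD^\circ$: for instance, first contract the transverse coordinate radially toward the axis $Y=0$, then translate in the $e_x$-direction, using that such a connecting path exists whenever $K_0$ is non-empty on the relevant $x$-interval. Since $\widetilde Z$ has Lipschitz diffusion coefficient and uniformly elliptic covariance, its law started at $z$ charges every tubular neighbourhood of this curve with positive probability, so $v(z)>0$ for each $z\in K_0$.

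The main obstacle is to upgrade pointwise positivity to a uniform bound $\inf_{K_0}v>0$. I would rely on elliptic regularity: on the bounded open set $U:=\{z'\in\cD^\circ: x'<r,\, D(z')>h'^2\}$ the function $v$ is the unique bounded solution of the Dirichlet problem $\cL v=0$ (with $\cL=\tfrac12\trace[\Sigma H_v]$) having boundary values $1$ on $\{x=r\}\cap\partial U$ and $0$ on $\{D=h'^2\}\cap\partial U$, and standard interior/boundary regularity gives continuity of $v$ in a neighbourhood of $K_0$, which lies strictly inside $U\cup\{x=r\}$ because $h>h'$. Continuity on the compact $K_0$ combined with pointwise strict positivity then yields the required uniform lower bound $\eps:=\inf_{K_0}v>0$. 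An alternative route that avoids PDE regularity is to use continuous dependence of the SDE on its initial condition to transfer the tube estimate from each $z_0\in K_0$ to an open neighbourhood, and then apply compactness to cover $K_0$ by finitely many such neighbourhoods.
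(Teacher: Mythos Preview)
Your approach is correct and takes a genuinely different route from the paper. Both proofs begin with the same reduction: on $[0,\sigma_r\wedge\tau_{h'})$ the process stays away from $\partial\cD$, so the local-time term is dormant and the dynamics reduce to an unconstrained uniformly elliptic diffusion on a bounded region. From there the arguments diverge. The paper frames the hitting probability directly as the solution $u$ of a Dirichlet problem $\tfrac12\Delta_\Sigma u=0$ on a smoothed bounded domain $\hat\cD_{h',r}$ (obtained from $\cD_{h',r}$ by rounding corners), with continuous boundary data equal to $1$ on the exit face $\cD_{h,r}\cap\{x=r\}$ and $0$ elsewhere, identifies $u(z)=\Exp_z f(Z_\tau)\leq\Pr_z(\sigma_r<\tau_{h'})$ via the probabilistic representation, and then invokes the strong maximum principle (Friedman) to conclude $\inf_{\cD_{h,r}}u>0$ in one step.

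You instead obtain pointwise positivity from the Stroock--Varadhan support theorem and then upgrade to a uniform bound via continuity of $v$ on the compact $K_0$. This works, but is more laborious: your elliptic-regularity step essentially rederives the continuity that the paper obtains automatically from the PDE setup, and your support-theorem path construction, while correct in spirit, requires verifying that the open set $\{D>h'^2,\,x<r+1\}\cap\cD$ is path-connected (your ``contract then translate'' recipe tacitly assumes the axis segment stays in this set). The paper's maximum-principle route sidesteps both issues, since the strong maximum principle delivers strict interior positivity and hence a uniform lower bound on compact subsets simultaneously, without any explicit control path or separate continuity argument. What your approach buys in exchange is a proof that is fully probabilistic and does not rely on citing a PDE maximum-principle theorem.
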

\begin{proof}
Fix $r \in \RP$ and $0 < h' < h < \infty$. For any $\gamma\geq0$, define 
$\cD_{\gamma,r} := \{ z = (x,y) \in \cD : D(z) \geq \gamma^2, \, x \leq r \}$. 
For a $C^2$ function $u : \cD \to \R$, we define the $\Sigma$-Laplacian of $u$ by
the formula
$\Delta_\Sigma u(z) := \trace[H_u(z)\Sigma(z)]$, where
$H_u(z)$ is the Hessian of~$u$.
Denote 
$\cS_r:=\{z=(x,y)\in\R^{d+1}:x=r\}$ and 
let $\hat \cD_{h',r}$ be a closed domain with $C^2$ boundary satisfying 
$$\cD_{h,r} \subset \hat \cD_{h',r}\subset \cD_{h',r},\quad \cD_{h'',r}\cap \cS_r = \hat \cD_{h',r}\cap \cS_r, 
\text{ for some $h''\in(h',h)$.}
$$
A domain $\hat \cD_{h',r}$ can be obtained from $\cD_{h',r}$  by smoothing corners appropriately.
  A Dirichlet problem on $\hat \cD_{h',r}$ with boundary condition~$f: \partial \hat \cD_{h',r} \to \R$
  is given by 
\begin{align}
    \label{eq:dirichlet1}  \frac{1}{2} \Delta_\Sigma u & = 0, \text{ on } \Int \hat \cD_{h',r}; \\
    \label{eq:dirichlet2} 
    u & = f, \text{ on } \partial \hat \cD_{h',r},
\end{align}
where $\Int \hat \cD_{h',r}$ denotes the interior of $\hat \cD_{h',r}$ in $\R^{d+1}$.

Choose a continuous $f: \partial \hat \cD_{h',r} \to \R$, such that
$f\equiv 1$ on $\cD_{h,r}\cap \cS_r$
and $f\equiv 0$ on
$\partial \hat \cD_{h',r} \setminus (\hat \cD_{h',r}\cap \cS_r)$.
Then, by~\cite[pp.~364--366]{ks}, the function 
\begin{equation*}
      u (z) := \Exp_z f ( Z_{\tau} ), \text{ where $\tau:=\inf\{t\in\RP: Z_t\in \partial \hat \cD_{h',r}\}$,} 
     \end{equation*}
solves the Dirichlet problem in~\eqref{eq:dirichlet1}--\eqref{eq:dirichlet2}.
Moreover 
$f(Z_\tau)\leq \1{\sigma_r<\tau_{h'}}$ a.s., implying that 
$u(z)\leq \Pr_z(\sigma_r<\tau_{h'})$ for all $z\in\cD_{h,r}$.
Since $f$ is continuous, the domain $\hat \cD_{h',r}$ has $C^2$ boundary and thus 
satisfies the inside sphere property (see~\cite[p.~55]{friedman} for definition),
and the coefficients in~\eqref{eq:dirichlet1}--\eqref{eq:dirichlet2} are continuous and uniformly elliptic by Assumption~\eqref{ass:variance},
the maximum principle~\cite[Thm~21, p.~55]{friedman} yields  
$\inf_{z\in\cD_{h,r}} u(z)>0$, implying the lemma.
\end{proof}

Next is a diffusive upper bound on the exit from a compact subset of the interior of~$\cD$.

\begin{lemma}
\label{lem:exit3}
Suppose that~\eqref{ass:domain1}, \eqref{ass:variance}, and~\eqref{ass:vector-field}  hold, 
and let $\delta>0$ be the constant in~\eqref{ass:variance}. Then, for any $h \in (0,\infty)$,
$r\in\RP$ and $z\in\cD$,
we have
\[  \Exp_z \left[ \sigma_r \wedge \tau_{h}  \right] \leq \frac{\max\{r^2,x_0^2\}}{\delta},\]
where $x_0$ is such that 
$\Pr_z(X_0=x_0)=1$.
\end{lemma}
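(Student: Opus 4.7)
\textbf{Proof plan for Lemma~\ref{lem:exit3}.}
The case $x_0 \geq r$ is trivial since then $\sigma_r = 0$ almost surely, so I may assume $x_0 < r$. The key observation is that the local time $L$ is frozen in the interior: on $[0,\tau_h)$, we have $D_t > h^2 > 0$, hence $Z_t \in \cD \setminus \pcD$, and the defining relation $L_t = \int_0^t \1{Z_s \in \pcD}\ud L_s$ from~\eqref{eq:SDE-for-Z} implies that $L$ is constant on $[0,\tau_h)$. Consequently, on $[0, \sigma_r \wedge \tau_h)$ the SDE~\eqref{eq:SDE-for-Z} reduces to the purely diffusive form $\ud Z_t = \Sigma^{1/2}(Z_t)\ud W_t$, and note that the process is bounded here ($X_t < r$ and $\|Y_t\|_d \leq b(X_t)$ with $b$ continuous on a compact interval), so there is no explosion concern up to $\sigma_r \wedge \tau_h$.

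Next, I apply It\^o's formula to $X_t^2 = (e_x^\tra Z_t)^2$ on $[0, \sigma_r \wedge \tau_h)$ to obtain
\[
\ud (X_t^2) = 2 X_t \bigl\langle e_x, \Sigma^{1/2}(Z_t) \ud W_t \bigr\rangle + e_x^\tra \Sigma(Z_t) e_x \, \ud t .
\]
The uniform ellipticity assumption~\eqref{ass:variance} yields $e_x^\tra \Sigma(Z_t) e_x \geq \delta$, so the process $(X_{t \wedge \sigma_r \wedge \tau_h}^2 - \delta (t \wedge \sigma_r \wedge \tau_h))_{t \in \RP}$ is a (local) submartingale. Boundedness of $X$ and of $\Sigma^{1/2}$ (from~\eqref{ass:variance}) on the compact time interval $[0, \sigma_r \wedge \tau_h]$ ensures the stochastic integral is a genuine martingale there, so the optional stopping theorem at the bounded stopping time $t \wedge \sigma_r \wedge \tau_h$ gives
\[
\Exp_z \bigl[ X_{t \wedge \sigma_r \wedge \tau_h}^2 \bigr] \geq x_0^2 + \delta \, \Exp_z [ t \wedge \sigma_r \wedge \tau_h ] .
\]

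Finally, since $Z$ has continuous paths, so does $X$; by the definition of $\sigma_r$ it follows that $X_{t \wedge \sigma_r} \leq r$ for all $t \in \RP$, and hence $X_{t \wedge \sigma_r \wedge \tau_h}^2 \leq r^2 \leq \max\{r^2, x_0^2\}$. Combining with the previous display and rearranging,
\[
\delta \, \Exp_z [ t \wedge \sigma_r \wedge \tau_h ] \leq \max\{r^2, x_0^2\} - x_0^2 \leq \max\{r^2, x_0^2\} .
\]
Letting $t \to \infty$ and invoking monotone convergence concludes the proof.

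There is no serious obstacle here: the argument is a standard quadratic Lyapunov function applied to the axial coordinate, with the essential ingredient being the vanishing of the local time in the interior, which completely removes the reflection term on $[0, \sigma_r \wedge \tau_h)$ and makes the uniform ellipticity of $\Sigma$ directly exploitable.
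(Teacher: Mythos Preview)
Your proposal is correct and follows essentially the same approach as the paper: both freeze the local time on $[0,\sigma_r\wedge\tau_h)$, apply It\^o to the quadratic Lyapunov function $X^2$, use uniform ellipticity to extract the drift bound $e_x^\tra\Sigma e_x\geq\delta$, and conclude via optional stopping at $t\wedge\sigma_r\wedge\tau_h$ followed by monotone convergence. The only cosmetic difference is that the paper first writes $M=(X^\tau)^2-[X^\tau]$ as a martingale and then passes to the submartingale $X^2-\delta t$, whereas you go directly to the latter; the substance is identical.
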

\begin{proof}
Let $r \in \RP$ and write $\tau = \tau_h \wedge \sigma_r$. 
From~\eqref{eq:SDE-for-Z}, we have 
\[ \ud X_t = e_x^\tra \Sigma^{1/2} (Z_t) \ud W_t + e_x^\tra \phi (Z_t) \ud L_t , ~\text{for}~ 0 \leq t < \taue.\]
The process $X^\tau=(X_{t\wedge\tau})_{t\in\RP}$ is by assumption~\eqref{ass:variance} a (true) martingale and
\[ \ud [ X^\tau ]_t = e_x^\tra \Sigma (Z_{t\wedge\tau}) e_x \ud t.\]
The process $M = (X^\tau)^2 - [X^\tau]$ is a martingale under $\Pr_z$ since it has no infinitesimal drift and is bounded by $\max\{r^2, x_0^2\}$.
Under assumption~\eqref{ass:variance} we have that $e_x^\tra \Sigma (z) e_x \geq \delta > 0$
for all $z \in \cD$, so if $Q_t = X_t^2 - \delta t$,
we see that $| Q_{t \wedge \tau} | \leq \max\{r^2, x_0^2\}+ \delta t < \infty$,
and $(Q_{t \wedge \tau})_{t \geq 0}$ is a submartingale.
Hence, for all $t \in \RP$ and all $z \in \cD$, 
\[ 0 \leq x_0^2= \Exp_z [ X_0^2] \leq \Exp_z [ Q_{t \wedge \tau} ] \leq \max\{r^2, x_0^2\} - \delta \Exp_z [ t \wedge \tau ] .\]
It follows by monotone convergence and the fact that $\tau \leq \sigma_r \leq \taue$, a.s., that $\Exp_z \tau = \lim_{t \to \infty} \Exp_z [ t \wedge \tau ] \leq \max\{r^2, x_0^2\} / \delta$, 
for all $z \in \cD$. 
\end{proof}

Now we can combine the preceding lemmas to complete the proof of Theorem~\ref{thm:exit}.

\begin{proof}[Proof of Theorem~\ref{thm:exit}.]
Fix $r \geq 1$ 
and note that it suffices to consider $z\in\cD$ with the first coordinate $x_0\leq r$.
Pick $0 < h' < h < h_r$, where $h_r$ is as in Lemma~\ref{lem:exit1},
and then fix $\eps>0$ so that (by Lemma~\ref{lem:exit2})
$\Pr_z \left( \sigma_r < \tau_{h'}  \right) \geq 2\eps$ whenever $D(z) \geq h^2$.
Markov's inequality and Lemma~\ref{lem:exit3} show
for  $C := 1/(\eps\delta) \in (0,\infty)$
we have
$$\Pr_z \left( \sigma_r \wedge \tau_{h'} \geq C r^2 \right) \leq \Exp_z[\sigma_r \wedge \tau_{h'}]/(Cr^2)\leq 1/(C\delta)=\eps,$$ 
whenever $D(z) \geq h^2$.
(Note that $C$ will depend on~$r$, since $\eps$ does.)
Hence
\[
    \Pr_z \left( \sigma_r < \tau_{h'}, \, \sigma_r \leq C r^2 \right)
    \geq  \Pr_z ( \sigma_r < \tau_{h'} ) -   \Pr_z \left( \sigma_r \wedge \tau_{h'} \geq C r^2 \right) \geq 2\eps -\eps ,
\]
whenever $D(z) \geq h^2$. 
It follows that
\begin{align}
\label{eq:exit-right}
 \Pr_z \left( \sigma_r \leq C r^2 \right) \geq \eps , \text{ for all } z \in \cD \text{ with } D(z) \geq h^2 .\end{align}

Define $T_0 := 0$ and, for $k \in \N$, the stopping times
\begin{align*}
    S_{k}  := \inf \{ t \geq T_{k-1}  : D_t \leq (h')^2 \} , \text{ and }
    T_{k}  := \inf \{ t \geq S_k + 1 + C r^2 : D_t \geq h^2 \} .\end{align*}
 The strong Markov property (applied at $S_k + 1 + C r^2$) and Lemma~\ref{lem:exit1} imply that
\begin{equation}
    \label{eq:boundary-escape}
 \Exp [ T_k \wedge \sigma_r - S_k \mid \cF_{S_k} ] \leq B_1, \text{ on } \{ S_k < \sigma_r \} ,
 \end{equation}
 where $B_1 := 1 + C r^2 + (2h^2/\delta) < \infty$. 
It follows from~\eqref{eq:boundary-escape} that $T_k\wedge\sigma_r < \infty$
 whenever $S_k\wedge\sigma_r < \infty$, a.s. On the other hand, the strong Markov property and
 Lemma~\ref{lem:exit3} show 
 \begin{equation}
    \label{eq:reach-boundary}
 \Exp [ S_{k+1} \wedge \sigma_r - T_k \mid \cF_{T_k} ] \leq \max\{r^2,X_{T_k}^2\}/\delta\leq r^2/\delta, \text{ on } \{ T_k < \sigma_r \} .
 \end{equation}
 In particular, 
 $T_k \wedge \sigma_r < \infty$ implies $\sigma_r \wedge S_{k+1} < \infty$, a.s.
Since $T_0\wedge \sigma_r=0$ a.s., 
 it follows that $\sigma_r \wedge T_k < \infty$, a.s., for every $k \in \ZP$.
 Since $T_k \geq k$, a.s., we therefore have that $\sigma_r = \lim_{k \to \infty} ( \sigma_r \wedge T_k )$, a.s.
 
Note that $D_{T_k}\geq h^2$ on $\{T_k<\infty\}$ a.s. The strong Markov property (at $T_k$) and~\eqref{eq:exit-right} show that
 \begin{equation}
     \label{eq:exit-right-stopped}
     \Pr ( \sigma_r \leq T_k + C r^2 \mid \cF_{T_k} ) \geq \eps, \text{ on } \{ T_k < \infty \}. 
 \end{equation}
Let $K := \inf \{ k \in \ZP : \sigma_r \leq T_k + C r^2 \}$.
Since $$ \{ K > k+1 \} = \{ K > k,\sigma_r > T_{k+1} + C r^2  \} \subseteq \{ K > k \} \subseteq \{ T_k < \sigma_r-Cr^2 \} \subseteq \{ T_k < \infty \},$$ and
$T_{k+1} >T'_k$ for the stopping time $T'_k:= T_k + Cr^2$, we have that,
for any $k \in \ZP$,
\begin{align*}
     \Pr ( K > k+1 \mid \cF_{T'_k} )
     & =\Pr ( \sigma_r > T_{k+1} + C r^2 \mid \cF_{T'_k } ) \1 { K > k }  \\
     & = \Exp[\Pr ( \sigma_r > T_{k+1} + C r^2 \mid \cF_{T_{k+1} } ) \1{T_{k+1}<\infty} \mid \cF_{T'_k } ] \1 { K > k } \\
     & \leq ( 1 - \eps) \1 { K > k } , \end{align*}
     using  $\{ K > k \}\in\cF_{T'_k }$ and~\eqref{eq:exit-right-stopped}.
     It follows that $\Pr_z ( K > k+1 ) \leq (1-\eps) \Pr_z (K > k)$ for all $z\in\cD$.
     Iterating this argument shows that $\Pr_z ( K > k ) \leq (1-\eps)^k$
     for $k \in \ZP$, and so 
     \begin{equation}
         \label{eq:uniform_bound_exp_K}
     \Exp_z K \leq 1/\eps < \infty, \text{ for all\ } z\in\cD.
      \end{equation}
Now, on $\{ T_k < \sigma_r \}$,
\begin{align*}
    T_{k+1} \wedge \sigma_r - T_k \wedge \sigma_r & = \left( T_{k+1} \wedge \sigma_r 
    - S_{k+1} + S_{k+1} \wedge \sigma_r
    - T_k \right) \1 { \sigma_r > S_{k+1}} \\
    & {} \qquad {} 
    + \left( S_{k+1} \wedge \sigma_r  - T_k \right) \1 { \sigma_r \leq S_{k+1} } \\
    & \leq \left( T_{k+1} \wedge \sigma_r - S_{k+1} \right)  \1 {  S_{k+1} < \sigma_r }
    + \left(  S_{k+1} \wedge \sigma_r
    - T_k \right) .
\end{align*}
It follows that, on $\{ T_k < \sigma_r\}$,
\begin{align}
\nonumber
     \Exp \bigl[ T_{k+1} \wedge \sigma_r - T_k \wedge \sigma_r  \bigmid \cF_{T_k} \bigr]
& \leq \Exp \bigl[  \left( T_{k+1} \wedge \sigma_r - S_{k+1} \right)  \1 {  S_{k+1} < \sigma_r } \bigmid \cF_{T_k} \bigr] \\
\nonumber
& {} \qquad {} + \Exp \bigl[  S_{k+1} \wedge \sigma_r
    - T_k  \bigmid \cF_{T_k} \bigr]  \\
    & \leq B_1 + \delta^{-1} r^2 ,
    \label{eq:bound_on_K}
    \end{align}
    using~\eqref{eq:boundary-escape} (with $\cF_{S_{k+1}}\supset \cF_{T_k}$) and~\eqref{eq:reach-boundary}.
    Since, on the event $\{ T_k \geq \sigma_r\}$, we have 
    $T_{k+1} \wedge \sigma_r - T_k \wedge \sigma_r=0$, it holds that
    $T_{k+1} \wedge \sigma_r - T_k \wedge \sigma_r=(T_{k+1} \wedge \sigma_r - T_k \wedge \sigma_r)\1 { T_k < \sigma_r }$.
    In particular,
  for any $k \in \ZP$,
\begin{align*}
     \Exp_z [ T_k \wedge \sigma_r ] & = \sum_{\ell =0}^{k-1} \Exp_z  \bigl[
   ( T_{\ell+1} \wedge \sigma_r - T_\ell \wedge \sigma_r )  \1 { T_\ell < \sigma_r }  \bigr] \\
   & \leq ( B_1 + \delta^{-1} r^2 ) \Exp_z \sum_{\ell=0}^{k-1} \1 { T_\ell < \sigma_r } \leq ( B_1 + \delta^{-1} r^2 ) \Exp_z [ K  ],
   \end{align*}
   since $T_{K+1} > T_K + Cr^2 \geq \sigma_r$, a.s. We conclude
   by monotone convergence that
   \[ \Exp_z \sigma_r = \lim_{k \to \infty} \Exp_z [ T_k \wedge \sigma_r ]
 \leq ( B_1 + \delta^{-1} r^2 )/\eps < \infty, \]
 by~\eqref{eq:uniform_bound_exp_K},
 which shows that $\sup_{z \in \cD} \Exp_z \sigma_r < \infty$.
 
We now prove $\Pr_z ( \limsup_{t \uparrow \taue} X_t = +\infty ) = 1$ for all $z \in \cD$. By Theorem~\ref{thm:existence} we know that $\lim_{t\uparrow\taue} X_t=\infty$ on the event $\{\taue<\infty\}$.
 Thus it suffices to prove 
 that $\limsup_{t \uparrow \infty} X_t \geq r $ holds $\Pr_z$-a.s. on the event 
 $\{\taue=\infty\}$
 for all $z \in \cD$ and all $r \in \RP$.
To see this, fix $r \in \RP$, set $t_0 := \sigma_{r+1}$ and, for $k \in \N$,
 \[ s_k := \inf \{ t \geq t_{k-1} : X_t \leq r \}, ~\text{and}~ t_k := \inf \{ t \geq 1+s_k : X_t \geq r + 1 \} .\]
If $s_k = \infty$ for some $k$, then $\liminf_{t \to \infty} X_t \geq r$, 
as required.
For every $k\in\N$, on the event
$\{ s_k < \infty \}$
we have $t_k<\infty$,
because $\Exp[t_k-s_k \mid \cF_{s_k}]<\infty$ as a consequence of the strong Markov property and
the fact that $\sup_{z\in\cD}\Exp_z[\sigma_{r+1}]<\infty$.
Thus, almost surely, either there exists $k\in\N$ such that $s_k=\infty$, or for all $k\in\N$
we have
$t_k<\infty$ and 
$t_k\uparrow\infty$ as $k\to\infty$. In either cases $\limsup_{t \to \infty} X_t \geq r$, 
as required.
\end{proof}

\section{Explosions and strong laws for reflecting diffusions}
\label{sec:lln}

\subsection{Lyapunov functions}
In this section, we turn to the proof of our main result, Theorem~\ref{thm:lln}.
A key element in our proofs is a Lyapunov function~$g$, mapping $\cD$ to $\RP$, that will enable us to apply the martingale
results from Section~\ref{sec:martingales} to the multidimensional reflecting diffusion.
Recall that $b$~is the function that defines the domain~$\cD$ via~\eqref{eq:domain-def},
and suppose that $b$ is twice  differentiable, as in~\eqref{ass:domain1}. 
Pick a $C^2$ function $\tilde b:\RP\to(0,\infty)$ that coincides with $b$ on $(1,\infty)$
(i.e. $\tilde b(x)=b(x)$  for all $x > 1$)
and has bounded derivatives on compact sets in $\RP$.
Define $g(z) := g(x,y)$ for $z = (x,y) \in \RPRd$ 
by 
\begin{equation}
\label{eq:g-def}   g(x,y)  := x + \gamma \frac{\| y \|_d^2}{\tilde b(x)}.
\end{equation}
The parameter  $\gamma \in \R$ will be tuned below in the proof of our strong law, see Lemma~\ref{lem:xi-qv}. 
(We use $\tilde b$ in~\eqref{eq:g-def} rather than $b$ to avoid a blow-up of the derivatives of $g$ at the origin;
for what we need in this section only the large-$x$ behaviour of~$g$, is important.)
The intuition behind the choice of $g$ at~\eqref{eq:g-def} is that $g(x,y) \approx x$ but, while $X_t$ has zero drift in the interior, the curved level sets of $g$ produce (positive but small) drift for $g(Z_t)$, while
$\gamma$ can be tuned to control the sign of the local-time drift arising from the reflection: see Figure~\ref{fig:lyapunov-function} for a pictorial description.

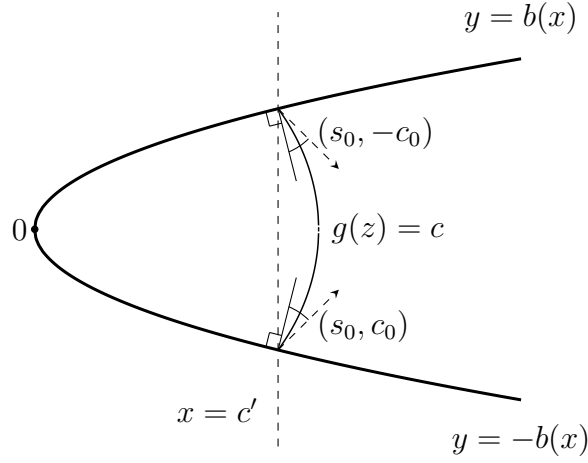
\begin{figure}[!h]
\begin{center}
\begin{tikzpicture}[domain=0:8, scale = 0.8]
\filldraw (0,0) circle (1.5pt);
\node at (-0.25,0) {$0$};
\draw[black, line width = 0.40mm]   plot[smooth,domain=0:8,samples=500] ({\x},  {(\x)^(1/2)});
\draw[black, line width = 0.40mm]   plot[smooth,domain=0:8,samples=500] ({\x},  {-(\x)^(1/2)});
\node at (5.8,0.0) {$g(z)=c$};
\draw[black,dashed] (4,-3.6) -- (4,3.6);
\node at (3,-3) {$x=c'$};
\draw[black, line width = 0.20mm]   plot[smooth,domain=0:0.667,samples=100] ({4+\x},  {(abs((14*(4+\x)^(1/2)-3*(4+\x)^(3/2))))^(1/2)});
\draw[black, line width = 0.20mm]   plot[smooth,domain=0:0.667,samples=100] ({4+\x},  -{abs(((14*(4+\x)^(1/2)-3*(4+\x)^(3/2))))^(1/2)});
\draw[black, line width = 0.19mm] (4.6666,0.035) -- (4.6666,-0.035);
\draw (4,2) -- (4.3,0.8);
\draw (3.8,1.95) -- (3.86,1.71);
\draw (4.06,1.76) -- (3.86,1.71);
\draw (4.5,1.5) arc (-45:-67.5:1);
\node at (8, 3.5)       {$y = b(x)$};
\node at (8, -3.5)      {$y = - b(x)$};
\draw (4,-2) -- (4.3,-0.8);
\draw (4.5,-1.5) arc (45:67.5:1);
\draw (3.8,-1.95) -- (3.86,-1.71);
\draw (4.06,-1.76) -- (3.86,-1.71);
\draw[black,->,>=stealth,dashed] (4,-2) -- (5,-1);
\draw[black,->,>=stealth,dashed] (4,2) -- (5,1);
\node at (5.6, 1.6)       {$(s_0,-c_0)$};
\node at (5.4, -1.6)       {$(s_0,c_0)$};
\end{tikzpicture}
\end{center}
\caption{\label{fig:lyapunov-function} An illustration of a level curve of function $g(z)$.
Note that the parameter $\gamma$ in the definition of $g$ in~\eqref{eq:g-def} modulates the curvature of the level set $\{z\in\cD:g(z)=c\}$. The vector field $\phi$ driving the reflection is asymptotically tangent to the level set of $g$ at the boundary $\partial \cD$ for appropriate choice of $\gamma$, see~\eqref{eq:nu-def} and  Lemma~\ref{lem:nu-properties} below.}
\end{figure}

Note that $\sup_{y :\| y \|_d \leq b(x)} | g(x,y) - x | \leq |\gamma| b(x)$.
Define the $\Sigma$-Laplacian of $g$ by $\Delta_\Sigma g(z) := \trace[H_g(z)\Sigma(z)]$, where
$H_g(z)$ is the Hessian of $g$.
Lemma~\ref{lem:g-derivatives} below gives some basic asymptotic properties of~$g$ and its derivatives.

In coordinates, write $z \in \R^{d+1}$ as $z = (x,y) = (x, y_1, \ldots, y_d)$,
where $y \in \R^d$ and $x, y_1, \ldots, y_d \in \R$. 
Let $\partial_x$, $\partial_{y_i}$ denote partial differentiation with respect to $x$, $y_i$, respectively,
and write $\nabla_y = (\partial_{y_1}, \ldots, \partial_{y_d})$ for the partial gradient
with respect to $y$, and $\nabla = (\partial_x, \nabla_y)$ for the (total) gradient. The proofs of Lemmas~\ref{lem:g-derivatives}
and~\ref{lem:nu-properties} is based on deterministic calculations
is in Subsection~\ref{subsec:deterministic_proofs} below.

\begin{lemma}
\label{lem:g-derivatives}
Suppose that $b : (0,\infty) \to (0,\infty)$ is twice differentiable. Then
\begin{align}
\label{eq:partial-g}
\partial_x g(x,y)   = 1 - \gamma\frac{b'(x)}{b(x)^2} \| y \|_d^2, \text{ and } \nabla_y g(x,y)   = 2\gamma \frac{y}{b(x)}  , \text{ for } x > 1  .\end{align}
Moreover, if  $\lim_{x \to \infty} b'(x) = 0$, then
\begin{thmenumi}[label=(\roman*)]
\item $\lim_{x \to \infty} \sup_{y : \| y\|_d \leq b(x)} | x^{-1} g(x, y) - 1 | = 0$; and
\item $\sup_{z \in \cD}  \| \nabla g(z) \|_{d+1} < \infty$.
\end{thmenumi}
If, in addition, $\lim_{x \to \infty} b(x)b''(x) = 0$,
$\sup_{z \in \cD} \| \Sigma (z) \|_{\rm op} < \infty$, and~\eqref{eq:Sigma-limit} holds, then
\begin{thmenumi}[label=(\roman*),resume] 
\item   $\lim_{x \to \infty} \sup_{y : \| y \|_d \leq b(x)} | \frac{1}{2} b(x) \Delta_{\Sigma} g(x,y) - \gamma \sigma^2 | = 0$.
\end{thmenumi}
\end{lemma}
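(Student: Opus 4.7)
The plan is direct computation of the partial derivatives of $g$ together with uniform estimates on $\cD$ that exploit $\|y\|_d \leq b(x)$ to absorb negative powers of $b(x)$. There is no conceptual difficulty; the task is to organize the calculation cleanly and to track the error terms in part~(iii).

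First, since $\tilde b = b$ on $(1,\infty)$ by construction, the formulas for $\partial_x g$ and $\nabla_y g$ in~\eqref{eq:partial-g} follow by the quotient and chain rules applied to $g(x,y) = x + \gamma \|y\|_d^2 / b(x)$. For~(i), I would bound $|g(x,y) - x| = |\gamma| \|y\|_d^2 / b(x) \leq |\gamma| b(x)$ on $\cD$; since $b'(x) \to 0$, an $\eps$-argument via $b(x) = b(x_0) + \int_{x_0}^x b'(u)\, \ud u$ yields $b(x)/x \to 0$, and dividing gives~(i). For~(ii), the formulas in~\eqref{eq:partial-g} give $|\partial_x g(x,y)| \leq 1 + |\gamma| |b'(x)|$ and $\|\nabla_y g(x,y)\|_d \leq 2|\gamma|$ on $\{x > 1\}$, both bounded at infinity since $b'(x) \to 0$; on the compact set $\{(x,y) \in \cD : x \leq 1\}$, boundedness of $\nabla g$ follows from continuity of the derivatives of $\tilde b$ together with strict positivity of $\tilde b$.

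The main step is~(iii). I would compute the entries of the Hessian $H_g$ on $\{x > 1\}$ explicitly:
\begin{align*}
\partial_x^2 g(x,y) &= \gamma \|y\|_d^2 \left( -\frac{b''(x)}{b(x)^2} + \frac{2 b'(x)^2}{b(x)^3} \right), \\
\partial_x \partial_{y_i} g(x,y) &= -\frac{2\gamma y_i b'(x)}{b(x)^2}, \qquad \partial_{y_i} \partial_{y_j} g(x,y) = \frac{2\gamma \delta_{ij}}{b(x)}.
\end{align*}
Indexing the components of $\Sigma$ by $0,1,\ldots,d$ (with $0$ the $x$-coordinate) and expanding $\Delta_\Sigma g = \sum_{i,j} \Sigma_{ij} \partial_i \partial_j g$, multiplication by $b(x)/2$ produces
\begin{equation*}
\tfrac{1}{2} b(x) \Delta_\Sigma g(x,y) = \gamma \bigl( \trace \Sigma(x,y) - e_x^{\tra} \Sigma(x,y) e_x \bigr) + R_1(x,y) + R_2(x,y),
\end{equation*}
where $R_1$ collects the $\partial_x^2 g$ contribution and $R_2$ the cross term. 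The leading term converges to $\gamma\sigma^2$ uniformly in $y$ by~\eqref{eq:Sigma-limit}.

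The main bookkeeping step is to bound the remainders uniformly in $y$: using $\|y\|_d \leq b(x)$ and boundedness of $\Sigma$ in operator norm, one obtains $|R_1(x,y)| \leq C \bigl( b(x) |b''(x)| + b'(x)^2 \bigr)$ and $|R_2(x,y)| \leq C |b'(x)|$ for a constant $C$, both of which vanish as $x \to \infty$ by the hypotheses $b(x) b''(x) \to 0$ and $b'(x) \to 0$. The only care required is to check that the $\|y\|_d$ and $\|y\|_d^2$ factors in the numerators cancel the $b(x)^{-1}$ and $b(x)^{-2}$ in the denominators, after which~(iii) is immediate.
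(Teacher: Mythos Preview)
Your proposal is correct and follows essentially the same approach as the paper: direct differentiation of~\eqref{eq:g-def} for~\eqref{eq:partial-g}, the bound $|g(x,y)-x|\leq|\gamma|b(x)$ together with $b(x)/x\to 0$ for~(i)--(ii), and for~(iii) the explicit Hessian computation, the expansion of $\Delta_\Sigma g$ with leading term $\tfrac{2\gamma}{b(x)}\sum_{i=1}^d \Sigma_{ii}$, and remainder bounds of order $b'(x)+b'(x)^2+b(x)|b''(x)|$ using $\|y\|_d\leq b(x)$ and boundedness of $\Sigma$. The paper's write-up is slightly terser but the ingredients and organization are the same.
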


Consider 
$\nu: \pcD \to \R$ defined by
\begin{equation}
\label{eq:nu-def}
\nu (z)   := \langle \phi( z ) ,  \nabla g( z ) \rangle , \end{equation}
which appears in the local-time contribution to the drift of the process~$g(Z)$.
The next result will allow us to control the sign of~$\nu$, which will enable
us to dispense with local-time terms appearing in our It\^o formula calculations
for~$g(Z_t)$ and related processes.

\begin{lemma}
\label{lem:nu-properties}
Suppose that $b: (0,\infty) \to (0,\infty)$ is twice differentiable, that $\lim_{x \to \infty} b'(x) =0$,
that $\sup_{z \in \pcD} \| \phi(z) \|_{d+1} < \infty$,
 and that~\eqref{eq:e1-projection} and~\eqref{eq:z-projection} hold. 
Then, 
\begin{thmenumi}[label=(\roman*)]
\item if $s_0 - 2 \gamma c_0 < 0$, then $\sup_{y : \|y\|_d = b(x)} \nu (x,y) \leq 0$ for all $x$ sufficiently large;
\item if $s_0 - 2 \gamma c_0 > 0$, then $\inf_{y : \|y\|_d = b(x)} \nu (x,y) \geq 0$ for all $x$ sufficiently large.
\end{thmenumi}
\end{lemma}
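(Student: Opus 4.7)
The plan is to reduce the lemma to a direct computation of $\nu$ on the boundary, using Lemma~\ref{lem:g-derivatives} for the partial derivatives of $g$ and the uniform convergence hypotheses~\eqref{eq:e1-projection}--\eqref{eq:z-projection}. Concretely, any boundary point has the form $z = (x, b(x) u)$ with $u \in \Sp{d-1}$, so I would evaluate the two components of $\nabla g$ from~\eqref{eq:partial-g}: on the boundary $\|y\|_d^2/b(x)^2 = 1$, giving $\partial_x g(x, b(x)u) = 1 - \gamma b'(x)$, and $\nabla_y g(x, b(x)u) = 2\gamma u$. In the $(e_x, e_u)$-decomposition this reads $\nabla g(x, b(x)u) = (1 - \gamma b'(x)) e_x + 2\gamma e_u$.

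Substituting into the definition~\eqref{eq:nu-def} and using $\phi_x(u) = \phi(x, b(x)u)$, I obtain the identity
\begin{equation*}
\nu(x, b(x)u) = (1 - \gamma b'(x))\, \langle \phi_x(u), e_x \rangle - 2\gamma\, \langle \phi_x(u), -e_u \rangle,
\end{equation*}
valid for every $x > 1$ and $u \in \Sp{d-1}$. The right-hand side is the quantity I would then analyse asymptotically.

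I would next pass to the limit $x \to \infty$. The factor $1 - \gamma b'(x) \to 1$ by hypothesis, and $\langle \phi_x(u), e_x \rangle$ is bounded uniformly in $u$ by $\sup_{z\in\pcD}\|\phi(z)\|_{d+1}$, so $(1-\gamma b'(x))\langle \phi_x(u), e_x \rangle - \langle \phi_x(u), e_x \rangle \to 0$ uniformly in $u$. Combined with the two uniform limits in~\eqref{eq:e1-projection} and~\eqref{eq:z-projection}, this yields
\begin{equation*}
\lim_{x \to \infty} \sup_{u \in \Sp{d-1}} \bigl| \nu(x, b(x)u) - (s_0 - 2\gamma c_0) \bigr| = 0.
\end{equation*}
From this uniform limit, both conclusions are immediate: if $s_0 - 2\gamma c_0 < 0$, then eventually $\sup_u \nu(x, b(x)u) \leq \tfrac12(s_0 - 2\gamma c_0) < 0$, giving~(i); if $s_0 - 2\gamma c_0 > 0$, then eventually $\inf_u \nu(x, b(x)u) \geq \tfrac12(s_0 - 2\gamma c_0) > 0$, giving~(ii). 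There is no substantive obstacle here since the derivative formulas from Lemma~\ref{lem:g-derivatives} are already in hand; the only point requiring care is to keep the convergence uniform in $u$, which follows from the uniform control of $\|\phi_x(u)\|_{d+1}$ together with the uniform limits assumed in~\eqref{eq:e1-projection}--\eqref{eq:z-projection}.
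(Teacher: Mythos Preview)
Your proposal is correct and follows essentially the same route as the paper: evaluate $\nabla g$ on the boundary via~\eqref{eq:partial-g}, substitute into~\eqref{eq:nu-def}, and pass to the uniform limit $s_0 - 2\gamma c_0$ using~\eqref{eq:e1-projection}--\eqref{eq:z-projection} together with $b'(x)\to 0$ and the bound on~$\|\phi\|_{d+1}$. If anything, your treatment is slightly cleaner, since you work directly with the inner product rather than first writing $\phi(z)$ as a sum of its $e_x$ and $e_{\hat y}$ components (which, strictly speaking, may omit tangential pieces when $d\geq 2$, though these vanish in the pairing with~$\nabla g$).
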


\subsection{Escape probability} 
The next result gives an escape probability estimate and establishes `transience'.
Recall that $(\cF_t)_{t\in\RP}$ denotes the filtration of the driving Brownian motion in~\eqref{eq:SDE-for-Z}. 
Let $\gamma >0$. 
For $0 \leq t < \taue$ set $\kappa_t := g(Z_t)$, where $g$ satisfies~\eqref{eq:g-def},
 and define $\kappa_t := \infty$ for $t \geq \taue$. 
 Note that $\kappa=(\kappa_t)_{t\in\RP}$ is a continuous process taking values in $[0,\infty]$.
 We use the notation $\lambda$ and $\rho$ for the passage times for $\kappa$ as defined in~\eqref{eq:lambda-rho}.

\begin{proposition}
\label{prop:transience}
Suppose that~\eqref{ass:domain1}, \eqref{ass:domain2}, \eqref{ass:variance}, 
\eqref{ass:vector-field}, and~\eqref{ass:lln} hold. 
Then for all $\ell \in \RP$ and all $\eps >0$, there exists $x > \ell$ such that, for every $(\cF_t)$-stopping time $T$,
\[
\Pr ( \lambda_{\ell , T} < \rho_\infty \mid \cF_T ) \leq \eps, \text{ on } \{ \kappa_T \geq x , \, T < \rho_\infty \} .
\]
Moreover, $\lim_{t \uparrow \rho_\infty} \kappa_t = \infty$ a.s.
\end{proposition}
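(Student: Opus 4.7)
The plan is to apply Theorem~\ref{thm:transience} to the process $\kappa_t = g(Z_t)$ by constructing a suitable Lyapunov function $f$, and then to invoke the second assertion of that theorem to conclude $\lim_{t\uparrow\rho_\infty}\kappa_t=\infty$.

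\textbf{Setup.} I would fix $\gamma \in (0, s_0/(2 c_0))$ in~\eqref{eq:g-def}, so that $s_0 - 2\gamma c_0 > 0$. It\^o's formula applied to $g(Z_t)$, using~\eqref{eq:SDE-for-Z}, yields, for $t < \taue$,
\[
\ud \kappa_t = \nabla g(Z_t)^\tra \Sigma^{1/2}(Z_t)\,\ud W_t + \tfrac{1}{2}\Delta_\Sigma g(Z_t)\,\ud t + \nu(Z_t)\,\ud L_t.
\]
Lemma~\ref{lem:g-derivatives} yields that $\|\nabla g\|_{d+1}$ is bounded on $\cD$ and that $\frac{1}{2} b(x)\Delta_\Sigma g(x,y) \to \gamma\sigma^2 > 0$ uniformly in $y$ as $x\to\infty$; Lemma~\ref{lem:nu-properties}(ii) supplies some $x_1\in\RP$ with $\nu(z)\geq 0$ on $\pcD\cap\{x\geq x_1\}$.

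\textbf{Lyapunov function and supermartingale property.} Next, I would take a bounded, $C^2$ function $f:\RP\to(0,\infty)$, bounded away from $0$ on bounded sets and with $f(y)\to 0$ as $y\to\infty$, with $f'<0$ on $[x_1,\infty)$ and satisfying $f(y)\sim 1/\log y$ (so $f'(y)\sim -1/(y\log^2 y)$ and $f''(y)=O(1/(y^2\log^2 y))$) as $y\to\infty$. A second application of It\^o to $f(\kappa_t)$ produces a local-time contribution $f'(\kappa_t)\nu(Z_t)\,\ud L_t$ that is non-positive on $\{\kappa_t\geq x_1\}$, and an absolutely continuous drift whose leading terms are $f'(\kappa_t)\cdot\gamma\sigma^2/b(X_t)$ (strictly negative) and $\tfrac12 f''(\kappa_t)\,\nabla g(Z_t)^\tra\Sigma(Z_t)\nabla g(Z_t)$. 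The bound $b(x)=O(x^{1-\eps})$ from~\eqref{eq:beta-b} (using $\beta<1$) guarantees that the first term dominates for $\kappa_t$ sufficiently large. After enlarging $x_1$ if needed and stopping at $S=\lambda_{x_1,T}\wedge\rho_{r,T}$, the process $(f(\kappa_{(t+T)\wedge S})\1{T<\rho_\infty})_{t\in\RP}$ is a bounded (hence true) supermartingale, verifying hypothesis~(b) of Theorem~\ref{thm:transience}; hypothesis~(a) is immediate from the construction of $f$, and the escape probability bound follows.

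\textbf{Almost-sure divergence.} To conclude via the last part of Theorem~\ref{thm:transience}, I would check that $\Pr(\rho_r<\rho_\infty)=1$ for every $r\in\RP$. Lemma~\ref{lem:g-derivatives}(i) gives $g(x,y)/x\to 1$ uniformly in $y$, so passage times of $\kappa$ to high levels are comparable to those of~$X$; together with $\sup_z\Exp_z\sigma_r<\infty$ and $\Pr_z(\limsup_{t\uparrow\taue}X_t=\infty)=1$ from Theorem~\ref{thm:exit}, this shows $\kappa$ reaches every finite level strictly before $\rho_\infty$, as continuity of $\kappa$ on $[0,\taue)$ forces $\kappa_{\rho_r}=r<\infty$ on $\{\rho_r<\infty\}$. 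The main obstacle throughout is the joint calibration of $\gamma$ and~$f$: $\gamma$ must be small enough for Lemma~\ref{lem:nu-properties}(ii) to make the boundary term non-negative, while $f$'s decay must be slow enough that its It\^o drift is controlled by the vanishing $1/b(X)$-drift of $\kappa$---a delicate balance characteristic of the supercritical-Lamperti regime highlighted in the introduction.
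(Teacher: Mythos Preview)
Your approach is correct and essentially the same as the paper's: apply Theorem~\ref{thm:transience} with $\gamma\in(0,s_0/(2c_0))$, so that the local-time contribution to the It\^o drift of $f(\kappa)$ is non-positive by Lemma~\ref{lem:nu-properties}(ii), and the interior drift is non-positive for large~$\kappa$ via Lemma~\ref{lem:g-derivatives}(iii) together with $b(x)=o(x)$. The only difference is that the paper takes the simpler $f(y)=1/(1+y)$; since both choices satisfy $|f''(y)|/|f'(y)|=O(1/y)$, the drift comparison reduces in either case to $b(x)=o(x)$, so the balance you describe as delicate is in fact routine.
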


Since, by~\eqref{eq:g-def}, we have
 $x \leq g(x,y) \leq x + \gamma b(x)$ for all $x>1$ and, by assumption~\eqref{ass:domain2}, the upper bound $x\mapsto x+\gamma b(x)$ is monotonically increasing for large $x$, it follows that
\begin{equation}
    \label{eq:rho-sigma}
\rho_r \leq \sigma_r \leq \rho_{r+ \gamma b(r)}, \text{ for all sufficiently large $r \in \RP$.} 
\end{equation}
Thus, $\taue=\rho_\infty$. Moreover, $\lim_{t\uparrow\taue} X_t=\infty$ if and only if $\lim_{t \uparrow \rho_\infty} \kappa_t = \infty$.

\begin{proof}[Proof of  Proposition~\ref{prop:transience}]
By It\^o's formula and~\eqref{eq:SDE-for-Z}, 
\begin{align}
\label{eq:SDE-g}
\kappa_t & 
 =g(z_0)+ \int_0^t \nu (Z_s) \ud L_s +  \frac{1}{2} \int_0^t \Delta_{\Sigma} g (Z_s) \ud s 
+ M'_t , \text{ for } 0 \leq t < \taue , \end{align}
where~$\nu$ is as defined at~\eqref{eq:nu-def}, and $M'$ is a local martingale given by 
\begin{equation}
    \label{eq:M-dash}
 M'_t = \int_0^t  \langle \nabla g( Z_s) , \Sigma^{1/2} (Z_s) \ud W_s \rangle   , \text{ for } 0 \leq t < \taue .\end{equation}
Note that
\begin{equation}
\label{eq:M-dash-qv}
 [ M' ]_t \leq \int_0^t  \| \nabla g( Z_s) \|_{d+1}^2 \| \Sigma^{1/2} (Z_s) \|_{{\rm op}}^2 \ud s \leq C t ,    ~\text{for}~ 0 \leq t < \taue ,\end{equation}
for a constant $C < \infty$,
by~\eqref{ass:variance} and Lemma~\ref{lem:g-derivatives}(ii).
Define
$f : [0,\infty] \to [0,1]$ by
$f(x) := 1/(1+x)$ if $x \in \RP$, and $f(\infty) := 0$.
By It\^o's formula and~\eqref{eq:SDE-g}, for $0 \leq t < \taue $,
\begin{align}
\label{eq:SDE-f-kappa}
 f ( \kappa_t ) 
& = f(g(z_0))- \int_0^t  (1+ \kappa_t  )^{-2} 
\left( G(Z_t)  
  \ud t  
+  \nu (Z_t) \ud L_t + \ud M'_t \right) , \end{align}
where, for all $z=(x,y)$ with $x>1$, $G$ satisfies 
\[ G (z) = G(x,y) =  \frac{1}{2} \Delta_{\Sigma} g(z) -   (1 + g(z) )^{-1}
 \| \Sigma^{1/2} (z) \nabla g (z) \|_{d+1}^2 .\]
Note $\lim_{x \to \infty} ( b(x) (1 + g(z) )^{-1} ) = \lim_{x \to \infty}  x^{-1} b(x) = 0$.
By Lemma~\ref{lem:g-derivatives}(ii) and the boundedness of $\Sigma^{1/2}$,  
$\| \Sigma^{1/2} (z) \nabla g (z) \|_{d+1}^2 = O(1)$ as $x\to\infty$. Hence Lemma~\ref{lem:g-derivatives}(iii) yields
\begin{equation}
\label{eq:F-lim}
 \lim_{x \to \infty} \sup_{y: \| y \|_d \leq b(x) } \bigl|b(x) G(x,y) - \gamma \sigma^2 \bigr| = 0 .\end{equation}
Suppose that $\gamma \in (0, \frac{s_0}{2c_0})$.
By~\eqref{eq:F-lim}, Lemmas~\ref{lem:g-derivatives}(i) and~\ref{lem:nu-properties}, 
and the fact that~$b$ is bounded on compact sets, 
 there exists $\ell_0 > 1$ so that,
 for every $x \geq \ell_0$ we have 
\begin{equation}
\label{eq:F_nu_bound}
b(x)G(x,y) \geq \gamma \sigma^2 /2
\text{ if } (x,y) \in \cD,
\text{ and }
\nu(x,y) \geq 0
\text{ if } (x,y) \in \pcD.
\end{equation}

For any stopping time $T$ and any
$\ell > \ell_0$ and $r > \ell$, 
define the stopping time  $S:=\lambda_{\ell,T}\wedge\rho_{r,T}$,
where $\lambda_{\ell,T}$ and $\rho_{r,T}$ are given in~\eqref{eq:lambda-rho}. Note that for any time 
$t\in [T,S]$
in the stochastic interval 
we have $\kappa_t\geq \ell_0$.
Thus, by~\eqref{eq:SDE-f-kappa} and~\eqref{eq:F_nu_bound}, on the event 
$\{T<\rho_\infty\}$,
for any $0 \leq s \leq t$
we have 
\begin{equation}
\label{eq:loc_mart_N}
\Exp\bigl[ f(\kappa_{(t+T)\wedge S})-f(\kappa_{(s+T)\wedge S})\bigmid \cF_{s+T}\bigr]\leq
-\Exp [ N_{t-s} \mid \cF_{s+T} ],
\end{equation}
where the local martingale 
$N_v := \int_{(s+T)\wedge S} ^{(v+s+T)\wedge S} (1+ \kappa_u)^{-2} \ud M'_u$
has quadratic variation bounded as
$[N ]_{v } \leq [M']_{(v+s+T)\wedge S}- [M']_{(s+T)\wedge S} \leq C v$ for all $v\in\RP$ (the constant $C$ is as in~\eqref{eq:M-dash-qv}).
Thus $N$ is a true martingale and~\eqref{eq:loc_mart_N} implies 
\begin{equation}
\label{eq:sup_mart_property_reflection}
\Exp\left[f(\kappa_{(t+T)\wedge S})\mid \cF_{s+T}\right]\leq
f(\kappa_{(s+T)\wedge S}), \text{ on the event } \{T<\rho_\infty\}.
\end{equation}
This implies the hypothesis~\ref{item:transience-2} in Theorem~\ref{thm:transience}. Since hypothesis~\ref{item:transience-1} clearly holds for the function $f(x)=(1+x)^{-1}$,  Theorem~\ref{thm:transience}
implies the proposition. 
 \end{proof}

\subsection{Linearization transformation}

To quantify the rate of escape of our process, we transform $g(Z_t)$
to obtain a process that grows approximately linearly, in a sense we describe shortly.
Recall the definition of~$B$ from~\eqref{eq:B-def} and that the Lyapunov function~$g$ satisfies~\eqref{eq:g-def}.
Consider the $[0,\infty]$-valued process $B(\kappa)=(B(\kappa_t))_{t\in\RP}$, satisfying 
$B(\kappa_t) = B ( g ( Z_t) )$ for $t < \taue$ and $B(\kappa_t) = B(\infty)$ for $t \geq \taue$ (recall also $\taue=\rho_\infty$ a.s.).

\begin{lemma}
\label{lem:xi-properties}
Suppose that~\eqref{ass:domain1}, \eqref{ass:domain2}, \eqref{ass:variance}, \eqref{ass:vector-field}, and~\eqref{ass:lln} hold. 
There exist a function $\mu : \cD \to \R$ and a process $M=(M_t)_{t\in[0,\rho_\infty)}$ 
such that
\begin{align}
\label{eq:xi-ito}
   B(\kappa_t) & = B(g(z))+ \int _0^t \mu ( Z_s ) \ud s +\int _0^t  b ( g ( Z_s ) ) \nu ( Z_s) \ud L_s +   M_t , \text{ for } 0 \leq t < \rho_\infty,\end{align}
  where the function $\nu$ is given in~\eqref{eq:nu-def}.
  Moreover, $\mu$ and $M$ have the following properties.
\begin{thmenumi}[label=(\roman*)]
      \item  It is the case that
      \begin{equation}
\label{eq:mu-bound}
\lim_{x \to \infty} \sup_{y : \|y\|_d \leq b(x)} \bigl| \mu (x,y) - \gamma \sigma^2 \bigr| = 0 .\end{equation}
      \item 
      	There exists a constant $C < \infty$ such that, a.s., for all $0 \leq t < \rho_\infty$,
	\begin{equation}
	\label{eq:M-qv-bound}
	[ M ]_t \leq C \int_0^t b ( g(Z_s) )^2 \ud s .\end{equation}
	\item
	\label{lem:xi-properties-iii}
	The process $N=(N_t)_{t\in\RP}$,
	given by 
$N_t:=(M_{(t +T)\wedge \lambda_{\ell,T}\wedge  \rho_{r,T}}-M_T)\1{T<\rho_\infty}$, defined for any $0\leq \ell\leq r$ (where $\lambda_{\ell,T}$ and $\rho_{r,T}$ are given in~\eqref{eq:lambda-rho} for $\kappa=g(Z)$) and $(\cF_t)$-stopping time $T$, is 
a continuous, uniformly integrable $\R$-valued $(\cF_{t+T})$-martingale.
  \end{thmenumi}
\end{lemma}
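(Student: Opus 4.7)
The plan is to apply It\^o's formula to the $C^2$ function $F(z) := B(g(z))$, for which
\[ \nabla F(z) = b(g(z))\nabla g(z), \quad H_F(z) = b'(g(z))\nabla g(z)(\nabla g(z))^\tra + b(g(z)) H_g(z) \]
(where $H_g$ is the Hessian of $g$), and then match the resulting decomposition against~\eqref{eq:xi-ito}. Substituting~\eqref{eq:SDE-for-Z}, one reads off the local time coefficient $\langle \nabla F, \phi\rangle = b(g) \langle \nabla g, \phi\rangle = b(g) \nu$ (using~\eqref{eq:nu-def}), the drift
\[ \mu(z) = \tfrac{1}{2} b'(g(z)) \langle \nabla g(z), \Sigma(z) \nabla g(z) \rangle + \tfrac{1}{2} b(g(z)) \Delta_\Sigma g(z) , \]
and the continuous local martingale $M_t = \int_0^t b(g(Z_s)) \langle \nabla g(Z_s), \Sigma^{1/2}(Z_s) \ud W_s \rangle$. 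A minor technicality is that $b'\to \infty$ at $0$ (Remark~\ref{rem:b-small-x}), so $F$ fails to be $C^2$ at the origin; this is handled by a standard localization, since the continuous diffusion $Z$ in dimension $d+1\geq 2$ avoids the origin after time $0$.

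For property~(i), the first term of $\mu$ vanishes as $x\to\infty$ uniformly in~$y$ because Lemma~\ref{lem:g-derivatives}(ii) and~\eqref{ass:variance} bound $\langle \nabla g, \Sigma \nabla g\rangle$, while~\eqref{ass:domain2} together with Lemma~\ref{lem:g-derivatives}(i) gives $g(z)\to\infty$ and $b'(g(z)) \to 0$. For the second term, $|g(z)-x|\leq |\gamma| b(x)$ for large $x$ by~\eqref{eq:g-def}, so the mean value theorem combined with $b'\to 0$ (from~\eqref{ass:domain2}) yields $b(g(z))/b(x) \to 1$ uniformly in $y$, and Lemma~\ref{lem:g-derivatives}(iii) then delivers~\eqref{eq:mu-bound}.

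Property~(ii) is a direct consequence of the It\^o isometry for $M$, as
\[ [M]_t = \int_0^t b(g(Z_s))^2 \|\Sigma^{1/2}(Z_s) \nabla g(Z_s)\|_{d+1}^2 \ud s \leq C \int_0^t b(g(Z_s))^2 \ud s \]
by Lemma~\ref{lem:g-derivatives}(ii) and boundedness of $\Sigma^{1/2}$ under~\eqref{ass:variance}.

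For property~(iii), $N$ is a continuous local martingale as a shifted, stopped version of $M$. On $[T, \lambda_{\ell,T} \wedge \rho_{r,T}]$, continuity of $\kappa=g(Z)$ forces $g(Z_s)\leq r$, so $b(g(Z_s)) \leq B_r := \max_{[0,r]} b < \infty$, and thus by~(ii) one has $[N]_\infty \leq C B_r^2 (\rho_{r,T}-T)\1{T<\rho_\infty}$. Since $g(x,y) \geq x$ for $\gamma \geq 0$, the pointwise comparison $\rho_r \leq \sigma_r$ holds, and by the strong Markov property at $T$ (see Appendix~\ref{sec:construction}) combined with Theorem~\ref{thm:exit},
\[ \Exp\bigl[\rho_{r,T} - T \bigmid \cF_T\bigr] \leq \Exp_{Z_T}[\sigma_r] \leq \sup_{z\in\cD} \Exp_z \sigma_r < \infty . \]
Hence $\Exp [N]_\infty < \infty$, so $N$ is a continuous, $L^2$-bounded martingale, and in particular uniformly integrable. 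The main subtlety is the asymptotic identification in~(i), where $b$ must be pushed through a perturbation $g(z)-x$ of the same scale $b(x)$; everything else follows routinely once Lemma~\ref{lem:g-derivatives}, Lemma~\ref{lem:nu-properties}, and Theorem~\ref{thm:exit} are in place.
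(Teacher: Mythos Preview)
Your proof is correct and follows essentially the same route as the paper's: the same It\^o decomposition (the paper does it in two steps, first deriving~\eqref{eq:SDE-g} for $\kappa=g(Z)$ and then applying It\^o to $B(\kappa)$, whereas you apply It\^o to $B\circ g$ directly), the same $\mu$ and $M$, and the same arguments for (i)--(iii) via Lemma~\ref{lem:g-derivatives}, the bound~\eqref{eq:M-dash-qv}, and Theorem~\ref{thm:exit} through $\rho_r\leq\sigma_r$. Your remark on the regularity of $B\circ g$ at the origin is a minor technical point that the paper itself elides.
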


\begin{proof}
By It\^o's formula, for $0 \leq t < \rho_\infty$,
\[ \ud B(\kappa_t) = b ( g (Z_t) ) \ud \kappa_t + \frac{1}{2} b' ( g(Z_t) ) \ud [\kappa ]_t ,\]
where $\kappa$ satisfies~\eqref{eq:SDE-g}.
Thus we obtain~\eqref{eq:xi-ito} with
\begin{align*}
 \mu ( z ) & := \frac{1}{2} \Bigl[ b ( g ( z ) ) \Delta_{\Sigma} g ( z ) + b' ( g( z) ) \| \Sigma^{1/2} (z) \nabla g ( z ) \|_{d+1}^2  \Bigr]  , \end{align*}
and $\ud M_t = b ( g (Z_t) ) \ud M'_t$ where $M'$ is given by~\eqref{eq:M-dash}.
Note that $\ud [ M]_t = b ( g (Z_t) )^2 \ud [ M']_t$ where $ [M']_t \leq C t$, by~\eqref{eq:M-dash-qv}. 
Thus, by integrating, we obtain~\eqref{eq:M-qv-bound} establishing~(ii).

Set $C_r = C \sup_{(x,y) \in \cD, \, x \leq r} b(g(x,y))^2 < \infty$. On the event $\{T<\rho_\infty\}$, analogous argument to the one that established~\eqref{eq:M-qv-bound} implies the following inequality for all $t \in \RP$:
\[ [ M ]_{(t +T)\wedge \lambda_{\ell,T}\wedge  \rho_{r,T}}-[M]_T \leq C \int_T^{(t+T) \wedge \rho_{r,T}} b ( g(Z_s) )^2 \ud s \leq C_r ( (t+T)\wedge \rho_{r,T}-T ) .\]
Since, by~\eqref{eq:rho-sigma}, 
we have $\rho_r\leq \sigma_r$ a.s.~for all $r$ sufficiently large ($\sigma_r$ is defined in~\eqref{eq:sigma-def}),
the continuous 
local martingale $N$ is a uniformly integrable martingale since it is bounded in $L^2$ by Theorem~\ref{thm:exit}:
\begin{align}
\nonumber
\sup_{t \geq 0} \Exp_{z_0} [ N^2_t] 
& \leq C_r \Exp_{z_0} [(\rho_{r,T}-T)\1{T<\rho_\infty}]\\
&= C_r \Exp_{z_0}\left[\Exp [\rho_{r,T}-T\mid\cF_{T}]\1{T<\rho_\infty}\right]
\leq C_r \sup_{z\in\cD}\Exp_z \sigma_r < \infty,
\label{eq:bounded_expected_exit_time}
\end{align}
for any starting point $z_0\in\cD$ of $Z$.  
This implies~(iii).

Finally, by the mean value theorem, $b(g(x,y)) - b(x) = (g(x,y) - x) b'(x+\theta (g(x,y) - x))$,
where $\theta = \theta(x,y) \in [0,1]$, and since $| g(x,y) - x| = O ( b(x) )$ (by~\eqref{eq:g-def})
and $b'(x) = o(1)$ (by~\ref{ass:domain2}) as $x\to\infty$, we have $| b(g(x,y)) - b(x) | = o ( b(x) )$ as $x\to\infty$.
Then~\eqref{eq:mu-bound} follows from Lemma~\ref{lem:g-derivatives}, and we obtain~(i).
\end{proof}

Let $\eps>0$.
Note that, for $\theta_\pm=\gamma \sigma^2 \pm \eps$,
Lemmas~\ref{lem:nu-properties} and~\ref{lem:xi-properties} suggest
that, before exiting a bounded set, the process 
$\zeta^{(B,\theta_+)}$ 
(resp.\ $\zeta^{(B,\theta_-)}$),
defined in~\eqref{eq:v-def}--\eqref{eq:zeta-def},
is a supermartingale (resp.\ submartingale) if
$2\gamma c_0 > s_0$
(resp.\ $2\gamma c_0 < s_0$).
In order to understand whether the process $X$ is explosive or superdiffusive using the theory of Section~\ref{sec:martingales}, Lemma~\ref{lem:xi-local-martingales} establishes such a property, starting after an arbitrary $(\cF_t)$-stopping time. 
 The localisation by~$\rho_{r,T}$ is removed 
in the proof of Theorem~\ref{thm:lln}, thus yielding our law of large numbers via Theorem~\ref{thm:martingale-lln}.

\begin{lemma}
\label{lem:xi-local-martingales}
Suppose that~\eqref{ass:domain1}, \eqref{ass:domain2}, \eqref{ass:variance}, \eqref{ass:vector-field}, and~\eqref{ass:lln} hold. 
Recall the function $B$, given in~\eqref{eq:B-def}, and that $\kappa=g(Z)$. For arbitrary $\eps >0$ and $\gamma$, define $\theta_\pm:=\gamma \sigma^2 \pm \eps$ (see~\eqref{eq:Sigma-limit} for the definition of $\sigma^2$).
\begin{thmenumi}[label=(\roman*)]
\item
\label{lem:xi-local-martingales-i}
Pick $\gamma > \frac{s_0}{2c_0}$.
Then there exists $x_1 \in \RP$
such that, for all $r \geq\ell\geq x_1$,
the process 
$\zeta^{(B,\theta_+)}$, 
defined in~\eqref{eq:v-def}--\eqref{eq:zeta-def},
and any $(\cF_t)$-stopping time $T$
satisfy
\[ 
\Exp [ \zeta^{(B,\theta_+)}_t \mid \cF_{s + T} ] \leq \zeta^{(B,\theta_+)}_{s} , \text{ on }
\{  T < \rho_\infty\}, 
\text{ for all $t \geq s \geq 0$}.\] 
\item
\label{lem:xi-local-martingales-ii}
Pick $\gamma \in(0, \frac{s_0}{2c_0})$.
Then there exists $x_1 \in \RP$
such that, for all $r \geq \ell \geq x_1$,
the process 
$\zeta^{(B,\theta_-)}$, 
defined in~\eqref{eq:v-def}--\eqref{eq:zeta-def},
and any $(\cF_t)$-stopping time $T$
satisfy
\[ 
\Exp [ \zeta^{(B,\theta_-)}_t \mid \cF_{s + T} ] \geq \zeta^{(B,\theta_-)}_{s} , \text{ on } \{ T<\rho_\infty\},  \text{ for all $t \geq s \geq 0$}.\] 
\end{thmenumi}
\end{lemma}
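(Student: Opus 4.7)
The plan is to decompose $\zeta^{(B,\theta_\pm)}_t - \zeta^{(B,\theta_\pm)}_s$ using the It\^o-type formula for $B(\kappa)$ in Lemma~\ref{lem:xi-properties}, and then to pick $x_1$ large enough so that, on the stochastic interval $[T, S]$ with $S := \lambda_{\ell,T} \wedge \rho_{r,T}$ and $\ell \geq x_1$, both the absolutely continuous drift and the local-time drift have the desired sign. Writing $\eta := \theta_\pm = \gamma \sigma^2 \pm \eps$ and using~\eqref{eq:xi-ito} evaluated on the event $\{T < \rho_\infty\}$ (so that $(t+T) \wedge S \leq \rho_{r,T} < \rho_\infty$~a.s.\ by~\eqref{eq:rho-sigma} and Theorem~\ref{thm:exit}), one has for $0 \leq s \leq t$,
\[
\zeta^{(B,\eta)}_t - \zeta^{(B,\eta)}_s
= \int_{(s+T)\wedge S}^{(t+T)\wedge S} \bigl( \mu(Z_u) - \eta \bigr) \ud u
+ \int_{(s+T)\wedge S}^{(t+T)\wedge S} b(g(Z_u)) \nu(Z_u) \ud L_u + (N_{t-s} - N_0),
\]
where, from Lemma~\ref{lem:xi-properties}\ref{lem:xi-properties-iii}, the martingale piece $N$ (indexed by the shifted filtration $(\cF_{s+T+\cdot})$) is a uniformly integrable true martingale, so $\Exp[N_{t-s} - N_0 \mid \cF_{s+T}] = 0$ on $\{T < \rho_\infty\}$.

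Next I would choose the threshold~$x_1$. By Lemma~\ref{lem:xi-properties}(i), there is a constant $x'_1$ such that $|\mu(x,y) - \gamma\sigma^2| \leq \eps$ whenever $x \geq x'_1$ and $(x,y) \in \cD$. By Lemma~\ref{lem:g-derivatives}(i), $g(x,y)/x \to 1$ uniformly as $x \to \infty$, so one can find $x_1 \in \RP$ (depending only on~$\gamma$, $\eps$, and~$b$) such that $g(x,y) \geq x_1$ forces $x \geq x'_1$. By enlarging $x_1$ further if necessary, Lemma~\ref{lem:nu-properties}(i) (in case~(i), where $s_0 - 2\gamma c_0 < 0$) gives $\nu(z) \leq 0$ on $\pcD \cap \{x \geq x'_1\}$, while Lemma~\ref{lem:nu-properties}(ii) (in case~(ii), where $s_0 - 2\gamma c_0 > 0$) gives $\nu(z) \geq 0$ there; in both cases $b(g(z)) \geq 0$ is automatic.

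With $\ell \geq x_1$ and $u \in [(s+T) \wedge S, (t+T) \wedge S]$, we have $\kappa_u \in [x_1, r]$ on $\{T < \rho_\infty\}$ (since at the stopping time $\lambda_{\ell,T}$ the process is at level $\ell$ from above by continuity of $\kappa = g(Z)$, and $\kappa_u \geq \ell \geq x_1$ strictly before $\lambda_{\ell,T}$), so the horizontal coordinate of $Z_u$ exceeds $x'_1$ throughout. In case~\ref{lem:xi-local-martingales-i}, choosing $\eta = \theta_+$, we have $\mu(Z_u) - \theta_+ \leq -\eps \leq 0$ and $b(g(Z_u))\nu(Z_u) \leq 0$, so the right-hand side of the displayed identity is nonpositive, and taking $\Exp[\cdot \mid \cF_{s+T}]$ yields the supermartingale inequality. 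In case~\ref{lem:xi-local-martingales-ii}, choosing $\eta = \theta_-$, both pathwise integrands are nonnegative and the same argument gives the submartingale inequality.

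The only real subtlety is ensuring that the two pathwise integrals (with respect to $du$ and $dL_u$) are integrable on the stopped interval so that the conditional expectation is well defined; this is immediate because $\mu$ is bounded on $\cD \cap \{x \leq r\}$ (by Lemma~\ref{lem:xi-properties}(i) plus boundedness of $b$, $b'$, and $\Sigma$ on compact sets), $b(g(\cdot))\nu(\cdot)$ is bounded on $\pcD \cap \{x \leq r\}$ by assumptions~\eqref{ass:vector-field} and~\eqref{ass:lln}, and $\Exp[\rho_{r,T}-T \mid \cF_T]\mathbf{1}\{T<\rho_\infty\}$ is finite by Theorem~\ref{thm:exit} (cf.~the estimate~\eqref{eq:bounded_expected_exit_time} in the proof of Lemma~\ref{lem:xi-properties}). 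This same bound is what underlies the uniform integrability of~$N$ and thus the vanishing of its conditional expectation, which is the pivotal ingredient of the argument.
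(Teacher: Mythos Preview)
Your proof is correct and follows essentially the same approach as the paper: decompose $\zeta^{(B,\theta_\pm)}_t-\zeta^{(B,\theta_\pm)}_s$ via the It\^o representation~\eqref{eq:xi-ito}, choose $x_1$ large so that Lemma~\ref{lem:xi-properties}(i) and Lemma~\ref{lem:nu-properties} force the $du$- and $dL_u$-integrands to have the desired sign on the stopped interval, and use Lemma~\ref{lem:xi-properties}\ref{lem:xi-properties-iii} to kill the martingale part. One tiny slip: from $|\mu-\gamma\sigma^2|\le\eps$ you only get $\mu-\theta_+\le 0$ (not $\le -\eps$), but that is all you need; also note that when $\kappa_T\le\ell$ (so $S=T$) the increment is trivially zero, which your displayed identity handles correctly even though you do not single out that case as the paper does.
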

\begin{proof}
Suppose that $\gamma > \frac{s_0}{2c_0}$ and $\eps >0$.  
By~\eqref{eq:mu-bound} and Lemma~\ref{lem:nu-properties}, 
there exists $x_1 \in (0,\infty)$ such that $\nu (x,y) \leq 0$ and 
$\mu (x,y) \leq \gamma \sigma^2 + \eps = \theta_+$ for all $x \geq x_1$ and all $y$.
Pick $r \geq \ell\geq x_1$ and a stopping time~$T$. On $\{ T < \rho_\infty,\, \kappa_T \leq \ell \}$, by~\eqref{eq:v-def}, one has $\lambda_{\ell,T}=T$, implying $v_t = v_s =0$
and hence
$\zeta^{(B,\theta_+)}_{t} =\zeta^{(B,\theta_+)}_{s}$
for all $t \geq s \geq 0$.
On $\{ T < \rho_\infty, \, \kappa_T > \ell \}$,
one has $\kappa_u \geq \ell$ for all $u \in [T+v_s, T+v_t]$ since, by definition~\eqref{eq:v-def}, $v_t+T\leq \lambda_{\ell,T}$. Thus, by~\eqref{eq:xi-ito} and the inequalities in the beginning of the paragraph, we get
\begin{align*}
\zeta^{(B,\theta_+)}_{t} - \zeta^{(B,\theta_+)}_{s}  =
-\theta_+ ( v_t - v_s) +
\int_{T+v_s}^{T+v_t} \ud B(\kappa_u)\leq M_{T+v_t} - M_{T+v_s}, \text{ on } \{ T < \rho_\infty \}.
\end{align*}
Lemma~\ref{lem:xi-properties}(iii) implies that 
$( M_{T+v_{t}} - M_T)_{t \in\RP}$ is a uniformly integrable $(\cF_{t+T})$-martingale and thus
$\Exp [ M_{T+v_t} \mid \cF_{s +T} ] =  M_{T+v_s} $ a.s.
It follows that
$\Exp [ \zeta^{(B,\theta_+)}_t - \zeta^{(B,\theta_+)}_s \mid \cF_{s+T}] \leq 0$,
which gives~(i). The argument for~(ii) is similar.
\end{proof}

\subsection{Explosion and passage times}

Theorem~\ref{thm:exit} shows that $\Exp_z \sigma_r < \infty$. The next result 
gives quantitative estimates for  $\Exp_z \sigma_r$ in terms of $B(r)$
under the stronger assumptions in force in Theorem~\ref{thm:lln}.

\begin{proposition}
\label{prop:hitting-time-asymptotics}
Suppose that~\eqref{ass:domain1}, \eqref{ass:domain2}, \eqref{ass:variance}, \eqref{ass:vector-field}, and~\eqref{ass:lln} hold. 
If $B(\infty) < \infty$, then $\sup_{z\in\cD} \Exp_z \taue < \infty$.
On the other hand, if $B(\infty) = \infty$, then, for all $z \in \cD$,
\[ \lim_{r \to \infty} \frac{ \Exp_z \sigma_r}{B(r)} = \frac{2 c_0}{s_0 \sigma^2} .\]
\end{proposition}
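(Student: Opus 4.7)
The plan is to apply Theorem~\ref{thm:explosion} to the process $\kappa_t := g(Z_t)$ (extended to $\infty$ at $\taue$) with $f = B$, drawing the required sub/super\-martingale hypotheses from Lemma~\ref{lem:xi-local-martingales} for two choices of the Lyapunov parameter $\gamma$ slightly above and slightly below the critical value $s_0/(2c_0)$, and then converting the resulting $\rho_r$-asymptotics back to $\sigma_r$-asymptotics via the sandwich~\eqref{eq:rho-sigma}. To set this up I would first verify the standing hypotheses of Theorem~\ref{thm:explosion}: \ref{item:explosion-a} is Proposition~\ref{prop:transience}; \ref{item:explosion-b} holds because $\rho_r \le \sigma_r$ (since $g(x,y) \ge x$ for $\gamma > 0$ on the relevant part of $\cD$) and $\Exp_z\sigma_r < \infty$ by Theorem~\ref{thm:exit}; and for~\ref{item:explosion-c}, the condition $\kappa_T \le x_1$ forces $X_T \le x_1$, so the strong Markov property together with $\rho_x \le \sigma_x$ and Theorem~\ref{thm:exit} gives $\Exp[\rho_{x,T}-T\mid\cF_T] \le \sup_{z\in\cD}\Exp_z\sigma_x < \infty$.

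For the case $B(\infty) < \infty$, I would fix any $\gamma \in (0, s_0/(2c_0))$ and $\eps>0$ and set $\theta_- := \gamma\sigma^2 - \eps$. Lemma~\ref{lem:xi-local-martingales}\ref{lem:xi-local-martingales-ii} supplies the submartingale condition~\eqref{eq:submartingale-condition} for $\zeta^{(B,\theta_-)}$, so the final clause of Theorem~\ref{thm:explosion}\ref{thm:explosion-ii} (applicable precisely because $B(\infty)<\infty$) produces a constant $C \in \RP$ such that $\Exp[\rho_\infty\mid\cF_0] \le C$ almost surely. Tracing $C$ through Lemma~\ref{lem:bounded-set-correction}\ref{lem:bounded-set-correction-ii} confirms that it depends only on the model parameters (through $\theta_-$, the escape level $x$ furnished by Proposition~\ref{prop:transience}, and $B(x)$), and not on the starting point, so $\sup_{z\in\cD}\Exp_z\taue \le C$, as required.

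For the non-explosive case $B(\infty)=\infty$, I would extract matching upper and lower bounds on $\Exp_z\sigma_r/B(r)$ via two slightly different Lyapunov functions. For the lower bound, fix $\gamma_+ > s_0/(2c_0)$ and $\eps>0$: Lemma~\ref{lem:xi-local-martingales}\ref{lem:xi-local-martingales-i} supplies a supermartingale, and Theorem~\ref{thm:explosion}\ref{thm:explosion-i}, combined with the observation $B(\kappa_{\rho_r})=B(r)$ (by continuity of $\kappa$), yields $\liminf_{r\to\infty}\Exp_z\rho_r/B(r) \ge 1/(\gamma_+\sigma^2+\eps)$, which transfers to $\sigma_r$ via $\sigma_r \ge \rho_r$; letting $\gamma_+\downarrow s_0/(2c_0)$ and $\eps\downarrow 0$ gives the lower bound $2c_0/(s_0\sigma^2)$. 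Symmetrically, for $\gamma_- \in (0, s_0/(2c_0))$, Lemma~\ref{lem:xi-local-martingales}\ref{lem:xi-local-martingales-ii} combined with the first clause of Theorem~\ref{thm:explosion}\ref{thm:explosion-ii} (taking $C_r := B(r) < \infty$), together with the sandwich $\sigma_r \le \rho_{r+\gamma_- b(r)}$ from~\eqref{eq:rho-sigma}, gives the matching upper bound.

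The main technical point I expect to be the obstacle is in this final upper sandwich, which requires $B(r+\gamma b(r))/B(r) \to 1$ as $r\to\infty$, equivalently $b(r)^2 = o(B(r))$. In the power-law cases of Example~\ref{ex:lln} this is immediate (the ratio behaves like $r^{\beta-1}$); for general $b$ satisfying~\eqref{ass:domain2} with $B(\infty)=\infty$, I would establish this by combining the polynomial upper bound $b(x) \le x^{\beta+o(1)}$ from~\eqref{eq:beta-b} with $b'(x) = o(1)$ (so $b$ varies only by $o(b(r))$ over $[r, r+\gamma b(r)]$, whence $B(r+\gamma b(r))-B(r) = O(b(r)^2)$) and a suitable lower bound on $B$ teased out of $B(\infty)=\infty$ together with~\eqref{ass:domain2}.
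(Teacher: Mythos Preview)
Your proposal is correct and follows essentially the same route as the paper: verify the standing hypotheses of Theorem~\ref{thm:explosion} via Proposition~\ref{prop:transience}, Theorem~\ref{thm:exit}, the sandwich~\eqref{eq:rho-sigma}, and the strong Markov property; then invoke Lemma~\ref{lem:xi-local-martingales}\ref{lem:xi-local-martingales-i}/\ref{lem:xi-local-martingales-ii} with $\gamma$ just above/below $s_0/(2c_0)$ to feed into Theorem~\ref{thm:explosion}\ref{thm:explosion-i}/\ref{thm:explosion-ii}, converting between $\rho_r$ and $\sigma_r$ via~\eqref{eq:rho-sigma}. The technical point you flag---that $B(r+\gamma b(r))/B(r)\to 1$---is exactly what the paper isolates as Lemma~\ref{lem:b-B-bound}\ref{lem:b-B-bound-ii}, and your sketched argument (use $b'(x)=o(1)$ to bound the increment by $O(b(r)^2)$, then show $b(r)^2=o(B(r))$ from the assumption $\beta<1$) matches the paper's proof of that lemma.
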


The following lemma, proved in Subsection~\ref{subsec:deterministic_proofs} below, gives certain properties of the functions $b$ and $B$, useful in what follows. 

\begin{lemma}
\label{lem:b-B-bound}
Assume $\beta$, defined in~\eqref{eq:beta-def}, satisfies $\beta<1$. Then the following hold. 
\begin{thmenumi}[label=(\roman*)]
\item
\label{lem:b-B-bound-i}
If $B (\infty) < \infty$, then $b(x) = O(1/x)$ as $x \to \infty$,
while if $B (\infty) = \infty$, then there exist $\delta \in (1,2)$
and $C < \infty$ such that 
$b(x)^2 \leq C (1+  B(x)^{2-\delta})$, for all $x \in \RP$.
\item
\label{lem:b-B-bound-ii}
For any $\omega \in \R$, it is the case that
$\lim_{x \to \infty} B (x + \omega b(x))/B(x) = 1$.
\end{thmenumi}
\end{lemma}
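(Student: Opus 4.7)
My plan is to derive both parts from a single differential inequality obtained from the definition of~$\beta$: fixing any $\beta' \in (\beta, 1)$, definition~\eqref{eq:beta-def} yields an $x_0 \in (0,\infty)$ such that $b'(x)/b(x) \leq \beta'/x$ for all $x \geq x_0$. Integrating this between $y$ and~$x$ with $x_0 \leq y \leq x$ gives the two pointwise comparisons
\[ b(y) \geq b(x) (y/x)^{\beta'} \text{ for $x_0 \leq y \leq x$,} \quad\text{and}\quad b(x) \leq b(x_0) (x/x_0)^{\beta'} \text{ for $x \geq x_0$.} \]
These estimates are essentially the only analytic input needed beyond elementary calculus.

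For part~\ref{lem:b-B-bound-i}, I would integrate the lower bound over $[x/2, x]$ (for $x \geq 2x_0$) to obtain a constant $c_{\beta'} > 0$ with $x b(x) \leq c_{\beta'}^{-1}\bigl( B(x) - B(x/2) \bigr)$. When $B(\infty) < \infty$, the right-hand side tends to $0$, so $x b(x) \to 0$ and in particular $b(x) = O(1/x)$. When $B(\infty) = \infty$, the same inequality gives the cruder bound $b(x) \leq c_{\beta'}^{-1} B(x)/x$. I would combine this with the upper bound $B(x) = O(x^{\beta'+1})$, which follows by integrating the pointwise upper bound on~$b$, and which is meaningful because one may always pick $\beta' > -1$ in this regime: if $\beta < -1$, choosing $\beta' \in (\beta, -1)$ would make $b$ integrable at infinity and hence $B(\infty) < \infty$, contradicting the assumption. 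Combining,
\[ b(x)^2 \leq c_{\beta'}^{-2} B(x)^2/x^2 = c_{\beta'}^{-2} B(x)^{2-\delta} \cdot \bigl(B(x)^\delta/x^2 \bigr) \leq C B(x)^{2-\delta}, \text{ for large $x$,} \]
for any $\delta \in (1,2) \cap (1, 2/(\beta'+1))$, which is non-empty since $\beta' < 1$ forces $2/(\beta'+1) > 1$. The bound is finally extended to all $x \in \RP$ using that $b$ is bounded on compact intervals.

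For part~\ref{lem:b-B-bound-ii}, I would invoke $b'(u) \to 0$ from assumption~\eqref{ass:domain2}. Since $b(x)/x \to 0$ (as $b(x) = O(x^{\beta'})$ with $\beta' < 1$), for any fixed $\omega \in \R$ and~$\eps>0$ we have $\sup_{u \in I_x} |b'(u)| < \eps$ for all sufficiently large~$x$, where $I_x$ denotes the compact interval with endpoints $x$ and $x + \omega b(x)$. The mean value theorem then delivers $\sup_{u \in I_x} |b(u) - b(x)| \leq \eps |\omega| b(x)$, so $b(u) = b(x)(1+o(1))$ uniformly on~$I_x$. Integrating yields
\[ B(x + \omega b(x)) - B(x) = \omega b(x)^2 (1+o(1)), \text{ as } x \to \infty. \]
Dividing by $B(x)$ reduces the claim to $b(x)^2/B(x) \to 0$: when $B(\infty) < \infty$, one uses $b(x) \to 0$ from part~(i) together with $B(x) \to B(\infty) \in (0,\infty)$; when $B(\infty) = \infty$, the bound $b(x) \leq c_{\beta'}^{-1} B(x)/x$ from part~(i) gives $b(x)^2/B(x) \leq c_{\beta'}^{-1} b(x)/x \to 0$.

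The main obstacle I anticipate is the case-analysis in part~(i) under $B(\infty) = \infty$: one must justify the existence of a suitable pair $(\beta', \delta)$ (in particular $\beta' > -1$ and $\delta \in (1,2)$ with $\delta(\beta'+1) \leq 2$), which rests on the small contradiction argument excluding $\beta < -1$. The remainder is routine, amounting to careful bookkeeping around the pointwise comparisons and their integrated forms.
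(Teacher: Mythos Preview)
Your argument for part~\ref{lem:b-B-bound-i} is correct and essentially parallel to the paper's: both routes establish the key inequality $x\,b(x)\le C(1+B(x))$ and then combine it with $B(x)=O(x^{\beta'+1})$ in the same way. The only cosmetic difference is that you integrate the pointwise comparison $b(y)\ge b(x)(y/x)^{\beta'}$ over $[x/2,x]$, whereas the paper reaches the same inequality via an integration by parts of $\int s\,b'(s)\,\ud s$. Your version has the pleasant by-product that in the case $B(\infty)<\infty$ you actually get the sharper $x\,b(x)\to 0$.

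For part~\ref{lem:b-B-bound-ii} there is a small but genuine gap: you invoke $b'(u)\to 0$ from assumption~\eqref{ass:domain2}, but the lemma as stated assumes only $\beta<1$, and $\beta<1$ alone does \emph{not} force $b'\to 0$. (The one-sided differential inequality $b'/b\le\beta'/x$ controls upward growth of $b$ but permits arbitrarily steep decreases, so you cannot conclude $b(u)=b(x)(1+o(1))$ for $u$ slightly \emph{below} $x$.) The paper avoids this extra hypothesis by bounding $\sup_{s\in I_x} b(s)$ directly via the inequality $s\,b(s)\le C(1+B(s))$ already obtained in part~(i), and then using $B(x)=o(x^2)$. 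Your argument is fine if you are willing to import~\eqref{ass:domain2} as an additional standing assumption --- which is harmless for all applications in the paper --- but as a proof of the lemma exactly as stated, this step needs to be replaced by the part-(i) bound rather than the mean-value-theorem estimate.
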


  \begin{proof}[Proof of Proposition~\ref{prop:hitting-time-asymptotics}.]
 We will apply Theorem~\ref{thm:explosion} to the process $\kappa=g(Z)$.
 Let $\gamma \geq 0$.  
A consequence of~\eqref{eq:rho-sigma} is that $\Pr ( \taue = \rho_\infty ) = 1$. 
Note that hypothesis~\ref{item:explosion-a}
of Theorem~\ref{thm:explosion}
follows from Proposition~\ref{prop:transience}.
Assumptions~\ref{item:explosion-b} and~\ref{item:explosion-c} of Theorem~\ref{thm:explosion}
are satisfied by~\eqref{eq:rho-sigma}, Theorem~\ref{thm:exit}
and the strong Markov property for $Z$ (cf. the inequality in~\eqref{eq:bounded_expected_exit_time}). 
Take the function~$f$ in Theorem~\ref{thm:explosion} to be $f = B$ as defined at~\eqref{eq:B-def}. 

Suppose first that $B (\infty) = \infty$.
Pick arbitrary $\eps>0$
and
$\gamma>\frac{s_0}{2c_0}$.
Lemma~\ref{lem:xi-local-martingales}\ref{lem:xi-local-martingales-i}
shows that the hypotheses of Theorem~\ref{thm:explosion}\ref{thm:explosion-i} are satisfied for $\theta:=\theta_+=\gamma \sigma^2+\eps$. Since $\rho_r\leq \sigma_r$ by~\eqref{eq:rho-sigma}, and the continuity of $\kappa$ implies $B(r)=B(\kappa_{\rho_r})$,
by Theorem~\ref{thm:explosion}\ref{thm:explosion-i}
we get
\[ \liminf_{r \to \infty} \frac{\Exp_z \sigma_r}{B(r)} \geq
\frac{1}{\theta_+} > \frac{2 c_0}{s_0 \sigma^2} -\eps',\]
for any $\eps'>0$, where the second inequality follows by choosing $\eps$ and $\gamma$ arbitrarily close to $0$ and $\frac{s_0}{2c_0}$, respectively. 
Since $\eps' >0$ was arbitrary, we get the `$\liminf$' half of the desired conclusion in the $B(\infty) = \infty$ case.
The corresponding `$\limsup$' result follows similarly; now using  Lemma~\ref{lem:xi-local-martingales}\ref{lem:xi-local-martingales-ii} 
and Theorem~\ref{thm:explosion}\ref{thm:explosion-ii} (with $\theta:=\theta_-=\gamma \sigma^2-\eps$ and $\gamma\in(0,\frac{s_0}{2c_0})$), and the upper bound
$\sigma_r\leq \rho_{r+\gamma b(r)}$ in~\eqref{eq:rho-sigma}, shows that
\[ \limsup_{r \to \infty}  \frac{\Exp_z \sigma_r}{B(r+ \gamma b(r))} \leq  \frac{2 c_0}{s_0 \sigma^2} .\]
The proof of the $B(\infty) = \infty$ case is then completed by Lemma~\ref{lem:b-B-bound}\ref{lem:b-B-bound-ii}.

If $B(\infty) < \infty$, Lemma~\ref{lem:xi-local-martingales}\ref{lem:xi-local-martingales-ii}
shows that we may apply Theorem~\ref{thm:explosion}(ii). 
In particular, $\sup_{z\in\cD}\Exp_z \taue < \infty$
follows from the final claim in Theorem~\ref{thm:explosion}\ref{thm:explosion-ii}.
\end{proof}

\subsection{Non-explosion and the strong law}

The next result establishes non-explosion, via an application of Theorem~\ref{thm:non-explosion}.

\begin{lemma}
\label{lem:non-explosion}
Suppose that~\eqref{ass:domain1}, \eqref{ass:domain2}, \eqref{ass:variance}, \eqref{ass:vector-field}, and~\eqref{ass:lln} hold. 
If $B(\infty) = \infty$, then $\Pr_z ( \taue = \infty ) = 1$ for every~$z\in\cD$.
\end{lemma}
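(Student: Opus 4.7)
The plan is to apply Theorem~\ref{thm:non-explosion} to the continuous process $\kappa_t := g(Z_t)$, using the concave Lyapunov function
\[
f(x) := \sqrt{B(x)}.
\]
Under the standing hypothesis $B(\infty) = \infty$, the function $f$ is continuous and non-decreasing with $f(\infty) = \infty$, so it satisfies~\eqref{eq:def_f}. I would fix the parameter $\gamma > s_0/(2c_0)$ in~\eqref{eq:g-def}, so that Lemma~\ref{lem:nu-properties}(i) gives $\nu \leq 0$ on $\pcD$ for all sufficiently large~$x$. The escape-probability hypothesis~\ref{thm:non-explosion-a} is exactly Proposition~\ref{prop:transience}, and hypothesis~\ref{thm:non-explosion-d} is trivial since $\kappa$ has continuous paths and hence $f(\kappa)$ has no jumps.

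For the supermartingale condition~\ref{thm:non-explosion-b}, apply It\^o's formula to $\sqrt{U_t}$, where $U_t := B(\kappa_t)$ has the semimartingale decomposition of Lemma~\ref{lem:xi-properties}; since $[U] = [M]$ the drift coefficient of $\sqrt{U_t}$ equals
\[
\frac{\mu(Z_t)}{2\sqrt{U_t}} - \frac{1}{8 U_t^{3/2}}\frac{\ud[M]_t}{\ud t},
\]
accompanied by the local-time contribution $\frac{b(g(Z_t))\,\nu(Z_t)}{2\sqrt{U_t}}\,\ud L_t$. By~\eqref{eq:mu-bound} and Lemma~\ref{lem:xi-properties}(ii), the first drift term is at most $(\gamma\sigma^2 + o(1))/(2\sqrt{B(\kappa_t)})$, tending to $0$ as $\kappa_t \to \infty$; the second drift term is non-positive; and the local-time term is non-positive by the choice of~$\gamma$. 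Consequently, for any fixed $\theta > 0$, for all $\ell$ sufficiently large and every $x > \ell$ with $T = \rho_x$, the process $\zeta^{(f,\theta)}$ defined in~\eqref{eq:zeta-def} is a supermartingale.

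The crux is hypothesis~\ref{thm:non-explosion-c}, and it is here that the concavity of $\sqrt{\,\cdot\,}$ is essential: a direct attempt with $f = B$ would require that the martingale part of $B(\kappa)$ have bounded quadratic variation on $\{\rho_\infty < \infty\}$, which by a Dambis--Dubins--Schwarz argument would force the supermartingale $B(\kappa_t) - \theta t$ to stay bounded, contradicting $B(\kappa_t) \to \infty$ on explosion. With $f = \sqrt{B}$ however,
\[
\ud[\sqrt{U}]_t = \frac{\ud[M]_t}{4 U_t} \leq \frac{C\,b(g(Z_t))^2}{4 B(\kappa_t)}\,\ud t,
\]
and the key estimate of Lemma~\ref{lem:b-B-bound}\ref{lem:b-B-bound-i}, valid precisely because $B(\infty) = \infty$, gives $b(x)^2 \leq C(1 + B(x)^{2-\delta})$ for some $\delta \in (1,2)$. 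Hence
\[
\frac{b(\kappa)^2}{B(\kappa)} \leq C\bigl(B(\kappa)^{-1} + B(\kappa)^{1-\delta}\bigr),
\]
which is bounded on the stochastic interval $[\rho_x, \lambda_{\ell,\rho_x}\wedge \rho_r)$ since there $\kappa \geq \ell$, so $B(\kappa)$ is bounded below and $1-\delta < 0$ controls the large-$B$ regime. This gives $[\zeta^{(f,\theta)}]_t \leq Ct$ uniformly in~$r$, and on $\{\rho_\infty < \infty\}$ one then has $\lim_{r\to\infty}[\zeta^{(f,\theta)}]_{(\lambda_{\ell,\rho_x}\wedge\rho_r)-\rho_x} \leq C\rho_\infty < \infty$, verifying~\ref{thm:non-explosion-c}.

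Theorem~\ref{thm:non-explosion} then delivers $\rho_\infty = \infty$ almost surely, and~\eqref{eq:rho-sigma} yields $\taue = \rho_\infty$, completing the proof. The main obstacle is the verification of~\ref{thm:non-explosion-c}: identifying the right concave modification of $B$ so that Lemma~\ref{lem:b-B-bound}\ref{lem:b-B-bound-i} converts the crude bound $b^2 \leq C(1 + B^{2-\delta})$ into a uniformly bounded integrand $b^2/B$ is what forces the replacement of $f = B$ by $f = \sqrt{B}$.
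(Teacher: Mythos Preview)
Your proof is correct and takes essentially the same approach as the paper: both apply Theorem~\ref{thm:non-explosion} to $\kappa = g(Z)$ with $\gamma > s_0/(2c_0)$ and Lyapunov function $f = B^\alpha$, verifying hypotheses~\ref{thm:non-explosion-a}--\ref{thm:non-explosion-d} via Proposition~\ref{prop:transience}, Lemma~\ref{lem:xi-properties}, and Lemma~\ref{lem:b-B-bound}\ref{lem:b-B-bound-i}. The only difference is the choice of exponent: the paper takes $\alpha \in (0,1/2)$ while you take the boundary value $\alpha = 1/2$; either works since the governing condition for the quadratic-variation bound is $2\alpha - \delta < 0$, which holds for $\alpha \le 1/2$ because $\delta > 1$.
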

 \begin{proof}
 Take $\gamma > \frac{s_0}{2c_0}$, and consider~$B(\kappa)^\alpha$ for $\alpha \in (0,1/2)$. By It\^o's formula and~\eqref{eq:xi-ito},
\begin{align*} 
  B(\kappa_t)^\alpha & =  B(g(z))^\alpha+ \int _0^t \alpha B(\kappa_s)^{\alpha-1} \mu ( Z_s ) \ud s +\int _0^t \alpha B(\kappa_s)^{\alpha-1} b ( g ( Z_s ) ) \nu ( Z_s) \ud L_s \\
  & {} \qquad {} +   \int_0^t \alpha B(\kappa_s)^{\alpha-1} \ud M_s + \int_0^t \frac{\alpha(\alpha-1)}{2} B(\kappa_s)^{\alpha-2} \ud [ M ]_s , \text{ for } 0 \leq t < \rho_\infty . \end{align*}
By choice of $\gamma$, with Lemmas~\ref{lem:nu-properties} and~\ref{lem:xi-properties},
for $g(z) \geq \ell$ large enough, 
we have $\nu (z) \leq 0$  and $\mu (z) \leq C < \infty$. 
Fix $\ell$ as above and pick $r>x>\ell$. Thus, there exists $C\in \RP$ such that
\begin{equation}
    \label{eq:xi-alpha-drift}
 B(\kappa_{t\wedge\lambda_{\ell,\rho_x}\wedge\rho_{r,\rho_x}})^\alpha \leq C t + \int_0^{t\wedge\lambda_{\ell,\rho_x}\wedge\rho_{r,\rho_x}} \alpha B(\kappa_s)^{\alpha-1} \ud M_s. 
 \end{equation}
Moreover, 
by~\eqref{eq:M-qv-bound} and Lemma~\ref{lem:b-B-bound}\ref{lem:b-B-bound-i},
for $t\in[0,\lambda_{\ell,\rho_x}\wedge\rho_{r,\rho_x}]$
we obtain
\begin{align}
\nonumber
[ B(\kappa)^\alpha ]_t & \leq 
\int_0^t \alpha^2 B(\kappa_s)^{2\alpha -2} \ud [ M ]_s
\leq \int_0^t C B ( g (Z_s) )^{2\alpha-2} b ( g(Z_s) )^2 \ud s \\
& \leq \int_0^t C' B ( g (Z_s) ) ^{2\alpha-\delta} \ud s \leq C't, \text{ for some constant $C'>0$,}
\label{eq:finite_QV_at_finite_time}
\end{align}
since $2\alpha <1$ and the $\delta$ from Lemma~\ref{lem:b-B-bound}\ref{lem:b-B-bound-i} satisfies $\delta>1$.

Pick $\theta>C$. The process 
$\zeta^{(f,\theta)}$, defined in~\eqref{eq:zeta-def} with $f(x):=B(x)^\alpha$, is a supermartingale by~\eqref{eq:xi-alpha-drift}.
Thus hypothesis~(b) of Theorem~\ref{thm:non-explosion}
holds for  $\zeta^{(f,\theta)}$ with $\theta > C$. 
Hypothesis~(c) of Theorem~\ref{thm:non-explosion}
also holds by~\eqref{eq:finite_QV_at_finite_time}. 
Hypothesis~(a) holds by Proposition~\ref{prop:transience} 
and Hypothesis~(d) is trivial for continuous processes.
Since
$\taue=\rho_\infty$ by~\eqref{eq:rho-sigma}, applying Theorem~\ref{thm:non-explosion} completes the proof of the lemma.
\end{proof}

In order to apply the non-explosive law of large numbers results from Section~\ref{sec:martingales} (namely Theorem~\ref{thm:martingale-lln}) we also need bounds on the quadratic variation of $B(\kappa)$,
where $\kappa = g(Z)$ and the function $g$ satisfies~\eqref{eq:g-def}.

\begin{lemma}
\label{lem:xi-qv}
Suppose that~\eqref{ass:domain1}, \eqref{ass:domain2}, \eqref{ass:variance}, \eqref{ass:vector-field}, and~\eqref{ass:lln} hold. 
Pick $\gamma >0$ and assume $B(\infty) = \infty$.
 For any $z\in\cD$, there exists $\delta\in(1,2)$ 
 such that,  $$\Exp_z \bigl( [B(\kappa)]_t \bigr) = O ( t^{3-\delta} ) ,\text{ as } t \to \infty. $$
\end{lemma}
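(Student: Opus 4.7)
The plan is to estimate the quadratic variation pathwise via Lemma~\ref{lem:xi-properties}(ii), convert the resulting bound into an integral of $B(\kappa_s)^{2-\delta}$ using the polynomial control from Lemma~\ref{lem:b-B-bound}\ref{lem:b-B-bound-i}, and then close the argument with a linear-in-$t$ bound on $\Exp_z[B(\kappa_s)]$ followed by Jensen's inequality.

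First I would observe that in the decomposition \eqref{eq:xi-ito} of $B(\kappa_t)$, the two terms involving $ds$ and $dL_s$ have finite variation, so $[B(\kappa)]_t = [M]_t$. Combining Lemma~\ref{lem:xi-properties}(ii) with Lemma~\ref{lem:b-B-bound}\ref{lem:b-B-bound-i} (invoked under $B(\infty)=\infty$) yields $\delta \in (1,2)$ and a constant $C$ such that, almost surely,
\[ [B(\kappa)]_t \;\leq\; C \int_0^t b(g(Z_s))^2\, ds \;\leq\; C'\Bigl(t + \int_0^t B(\kappa_s)^{2-\delta}\, ds\Bigr). \]
Taking $\Exp_z$ and applying Fubini, it suffices to show $\Exp_z[B(\kappa_s)^{2-\delta}] = O(s^{2-\delta})$, so that the $ds$-integral is $O(t^{3-\delta})$; the residual linear term $C't$ is then dominated since $3-\delta > 1$.

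To produce the linear bound on $\Exp_z[B(\kappa_s)]$ for arbitrary $\gamma > 0$, I would exploit monotonicity of the Lyapunov function in its parameter. Write $\kappa^\gamma := g^\gamma(Z)$ to make the dependence on $\gamma$ explicit; since $g^\gamma(x,y) = x + \gamma\|y\|_d^2/\tilde b(x)$ is non-decreasing in $\gamma \geq 0$ pointwise on $\cD$, we have $B(\kappa^\gamma_s) \leq B(\kappa^{\gamma_0}_s)$ for any $\gamma_0 \geq \gamma$. Choose $\gamma_0 > \max(\gamma, s_0/(2c_0))$; then Lemma~\ref{lem:xi-local-martingales}\ref{lem:xi-local-martingales-i} supplies the supermartingale hypothesis needed in Lemma~\ref{lem:bounded-set-correction}\ref{lem:bounded-set-correction-i} (its other hypotheses \ref{item:explosion-a}--\ref{item:explosion-c} of Theorem~\ref{thm:explosion} are verified exactly as in the proof of Proposition~\ref{prop:hitting-time-asymptotics}). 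Applying that lemma with $f = B$, then letting $r \to \infty$ and using Lemma~\ref{lem:non-explosion} ($\rho_\infty = \infty$ a.s.) together with Fatou, yields $\Exp_z[B(\kappa^{\gamma_0}_t)] \leq C + \theta_+ t$, and hence $\Exp_z[B(\kappa^\gamma_t)] = O(t)$. Because $y \mapsto y^{2-\delta}$ is concave on $\RP$ (as $2-\delta \in (0,1)$), Jensen's inequality gives $\Exp_z[B(\kappa^\gamma_s)^{2-\delta}] \leq (\Exp_z[B(\kappa^\gamma_s)])^{2-\delta} = O(s^{2-\delta})$, which is what was needed.

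I expect the main obstacle to be this linear-in-$t$ bound: the hypothesis only assumes $\gamma > 0$, and for small~$\gamma$ Lemma~\ref{lem:xi-local-martingales} supplies only a submartingale property, which via Lemma~\ref{lem:bounded-set-correction}\ref{lem:bounded-set-correction-ii} gives the \emph{wrong} inequality ($\Exp_z[t \wedge \rho_r] \lesssim \Exp_z[B(\kappa_{t \wedge \rho_r})]$). Routing around this via the auxiliary Lyapunov function $g^{\gamma_0}$ and the pointwise monotonicity $\kappa^\gamma \leq \kappa^{\gamma_0}$ is the one non-routine ingredient; the remaining steps are pathwise estimation, Jensen, and Fubini.
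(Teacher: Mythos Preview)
Your proposal is correct and follows essentially the same route as the paper: bound $[B(\kappa)]_t=[M]_t$ pathwise via Lemma~\ref{lem:xi-properties}(ii) and Lemma~\ref{lem:b-B-bound}\ref{lem:b-B-bound-i}, pass to a larger parameter $\gamma_0>s_0/(2c_0)$ using the pointwise monotonicity of $g$ in~$\gamma$, obtain the linear-in-$t$ bound on $\Exp_z[B(\kappa^{\gamma_0}_t)]$ from the supermartingale property in Lemma~\ref{lem:xi-local-martingales}\ref{lem:xi-local-martingales-i} together with Lemma~\ref{lem:bounded-set-correction}\ref{lem:bounded-set-correction-i}, and finish with Jensen. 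The only organisational difference is that the paper keeps the localisation by $\rho'_r$ (the $\gamma_0$-level stopping time) throughout and takes $r\to\infty$ at the very end, whereas you take the limit earlier via Fatou after invoking Lemma~\ref{lem:non-explosion}; both are fine.
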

\begin{proof}
For any $z\in\cD$, the inequalities in~\eqref{eq:rho-sigma}, assumption $B(\infty) = \infty$ and Lemma~\ref{lem:non-explosion} imply
$\Pr_z(\rho_\infty=\infty)=1$.
Pick $\gamma >0$ and define the stopping time $\rho_r$ by~\eqref{eq:lambda-rho} (with $T=0$) for some large $r$, where the process $\kappa=g(Z)$ with $g$ satisfying~\eqref{eq:g-def} with our chosen $\gamma$.
By~\eqref{eq:xi-ito} we have that $[B(\kappa)]_{t \wedge \rho_r} = [ M]_{t \wedge \rho_r}$ for all
$t, r \in \RP$, where~$(M_{t \wedge \rho_r} )_{t \geq 0}$
is a martingale whose quadratic variation satisfies the bound in~\eqref{eq:M-qv-bound} of Lemma~\ref{lem:xi-properties}. 

By Lemma~\ref{lem:b-B-bound}\ref{lem:b-B-bound-i}, we have $b(x)^2 \leq C + C B(x)^{2-\delta}$ for some $\delta \in (1,2)$, $C>0$ and all $x \geq 0$. 
Now, for all $r,t\in\RP$ and stopping time $S$, we have 
\[ \Exp_z \bigl[ b ( \kappa_{t \wedge S} )^2 \bigr] \leq C + C \Exp_z \bigl[ B ( \kappa_{t \wedge S}  )^{2 - \delta} \bigr]
\leq C + C \bigl(  \Exp_z  [ B ( \kappa_{t \wedge S} ) ] \bigr)^{2 - \delta} , \text{ on } \{S<\rho_\infty\}, \]
by Jensen's inequality, which is applicable since $2 - \delta \in (0,1)$.
Define $\gamma_0 := \gamma \vee \frac{s_0}{c_0}$, so that $\gamma_0 > \frac{s_0}{2c_0}$. Let $g_{\gamma_0}$ satisfy~\eqref{eq:g-def} with $\gamma_0$ instead of $\gamma$. Thus
$g(z) \leq g_{\gamma_0} (z)$ for all $z \in \cD$ (recall that $g$ satisfies~\eqref{eq:g-def} with $\gamma$, fixed in the beginning of the proof, and $\gamma\leq \gamma_0$), and $B$ is non-decreasing.
Let the stopping time $\rho'$ be
defined by~\eqref{eq:lambda-rho} for $\kappa'=g_{\gamma_0}(Z)$ and $T=0$.
By~\eqref{eq:M-qv-bound} and the previous display with $S=\rho_r'$ there exists a constant $C'>0$ such that 
\begin{equation}
    \label{eq:qv-gamma-gamma0}
\Exp \bigl( [ M]_{t \wedge \rho'_r} \bigr) \leq C' t + C' \int_0^{t} \bigl(  \Exp_z  [ B ( g_{\gamma_0} (Z_{s \wedge \rho'_r} ) )] \bigr)^{2 - \delta} \ud s.\end{equation}
Lemma~\ref{lem:xi-local-martingales}\ref{lem:xi-local-martingales-i},
applied 
to the process $g_{\gamma_0}(Z)$
with $\theta_+:=
\gamma_0\sigma^2+\eps$ (for arbitrary $\eps>0$),
yields
$\Exp_z  [ B ( g_{\gamma_0} (Z_{t \wedge \rho'_r} )) ] 
\leq C'' + C'' \Exp_z [ t \wedge \rho'_r ]\leq C''(1+t)$
for some constant $C''>1$ (cf.~definitions~\eqref{eq:v-def} and~\eqref{eq:zeta-def}).
Thus from~\eqref{eq:qv-gamma-gamma0} we conclude that
\[ \Exp_z \bigl( [ B(\kappa)]_t\bigr) =\lim_{r \to \infty} \Exp_z \bigl( [ B(\kappa)]_{t \wedge \rho'_r} \bigr) \leq C'  t + C' (C'')^2\int_0^t (1+s)^{2 - \delta} \ud s 
= O ( t^{3-\delta} ) , 
\]
as $t \to \infty$.
\end{proof}

 Finally, we can complete the proof of our main theorem.

\begin{proof}[Proof of Theorem~\ref{thm:lln}.]
Suppose first that $B$ is bounded.
Then Proposition~\ref{prop:hitting-time-asymptotics} shows that
$\sup_{z \in \cD} \Exp_z \taue < \infty$.
The fact that $\lim_{t \uparrow \taue} X_t = \lim_{t \uparrow \taue} L_t = \infty$, a.s.,
is contained in Theorem~\ref{thm:existence}. 
This completes the proof of part~(i) of Theorem~\ref{thm:lln}.

Suppose now that $B(\infty) = \infty$. Here Lemma~\ref{lem:non-explosion}
shows that  $\Pr_z ( \taue = \infty ) = 1$ for every~$z\in\cD$.
The limiting behaviour of the expectations $\Exp_z\sigma_r$, as $r\to\infty$,
is given in  Proposition~\ref{prop:hitting-time-asymptotics}.
It remains to prove the strong law of large numbers in~\eqref{eq:lln}
and the almost sure limit in~\eqref{eq:lln-local-time};
we first apply Theorem~\ref{thm:martingale-lln} to obtain `$\liminf$' and `$\limsup$' results for~$X_t$.
Take $\gamma > 0$, to be tuned later, and let $g$ be given by~\eqref{eq:g-def}. Take $\kappa = g(Z)$, which is $\R_+$-valued for all $t \in \RP$. Hypotheses~\ref{thm:martingale-lln-a} and~\ref{thm:martingale-lln-b} of Theorem~\ref{thm:martingale-lln} hold, by Proposition~\ref{prop:transience}
and Lemma~\ref{lem:xi-qv}, respectively. 

First take $\gamma > \frac{s_0}{2c_0}$. Fix $\eps >0$. 
Let $\zeta=(\zeta_t)_{t\in\RP}$ be as in~\eqref{eq:zeta-def}
with $f=B$, $\theta_+:=\gamma\sigma^2+\eps$, 
$\ell>x_1$ (where $x_1$ is as in Lemma~\ref{lem:xi-local-martingales}),
$T = \rho_x$ for $x > \ell$, 
and $r:=\infty$. 
For any $0<s<t$,
define $u_t:=(t+\rho_x)\wedge\lambda_{\ell,\rho_x}$,
$u_s:=(s+\rho_x)\wedge\lambda_{\ell,\rho_x}$.
By~\eqref{eq:xi-ito} 
in Lemma~\ref{lem:xi-properties}   we have
\begin{align*}
\zeta_{t} - \zeta_{s}  =
-\theta_+ (u_t-u_s) +
\int_{u_s}^{u_t} \ud B(\kappa_u)\leq M_{u_t} - M_{u_s}.
\end{align*}
The process $(M_{u_t})_{t\in\RP}$, defined in the proof of  Lemma~\ref{lem:xi-properties}, is by Lemma~\ref{lem:xi-qv} a local martingale with
integrable quadratic variation (i.e.~$\Exp_z[M]_t<\infty$ for all $t\in\RP$ and $z\in\cD$). Thus, by~\cite[p.~130]{ry}, it follows
$\Exp[M_{u_t} - M_{u_s}\mid\cF_{s+\rho_x}]=0$ a.s., implying that 
$\zeta$ 
is a supermartingale (note that the martingale property here cannot be obtained directly from Lemma~\ref{lem:xi-properties}\ref{lem:xi-properties-iii} as the process is not stopped at $\rho_r$).
Thus, by Theorem~\ref{thm:martingale-lln}\ref{thm:martingale-lln-upper},
we get 
$\limsup_{t\to\infty}B(g(Z_t))/t\leq \gamma\sigma^2+\eps$.
Since $\gamma > \frac{s_0}{2c_0}$ and $\eps >0$ were arbitrary
and, by the monotonicity of $B$, we have $B ( g( Z_t) ) \geq B (X_t)$,
it follows that
\[ \limsup_{t \to \infty} \frac{B(X_t)}{t} \leq \frac{s_0\sigma^2}{2 c_0}, \as \]

Next, take $\gamma \in (0, \frac{s_0}{2c_0})$ and $\eps >0$. 
Then a similar argument based on 
 an application of Theorem~\ref{thm:martingale-lln}(ii)
with $\theta_- := \gamma \sigma^2 - \eps$
shows that $\liminf_{t \to \infty} B(g(Z_t)) /t \geq \frac{s_0\sigma^2}{2 c_0}$, a.s.
Now
$B ( g( Z_t) ) \leq B (X_t  + \gamma b (X_t ) )$,
and so  Lemma~\ref{lem:b-B-bound}\ref{lem:b-B-bound-ii} shows that
\[ \liminf_{t \to \infty} \frac{B(X_t)}{t}
= \liminf_{t \to \infty} \frac{B (X_t  + \gamma b (X_t ) )}{t}
\geq \frac{s_0\sigma^2}{2 c_0}, \as \]
Combining these results gives the limit for $B(X_t)$ in~\eqref{eq:lln}. The limit for $B( \| Z_t \|_{d+1} )$
in~\eqref{eq:lln} follows from  Lemma~\ref{lem:b-B-bound}\ref{lem:b-B-bound-ii} and the fact that $| \| Z_t \|_{d+1} - X_t | \leq \gamma b (X_t)$. 

Finally, we observe that
\begin{align}
   \label{eq:x-projection} 
 X_{t} & = \langle e_x, z \rangle + m_{t} + \ell_t , ~~t < \taue, \text{ where }\\
    \label{eq:m-l-def}
m_{t} & := \int_0^{t}  e_x^\tra \Sigma^{1/2}(Z_s) \ud W_s, ~~  \ell_{t} :=  \int_0^{t} \langle e_x , \phi (Z_s) \rangle \ud L_s, ~\text{for}~ 0 \leq t < \taue.
\end{align}
We have from~\eqref{eq:lln} that $t^{-(1/2)-\eps} X_t \to \infty$ a.s. as $t\to\infty=\taue$~for some $\eps>0$~(see Remark~\ref{rems:lln}\ref{rems:lln-b}),
while the martingale $m$ satisfies $[m]_t \leq C t$ for all $t\in\RP$.
Since $m$ can be viewed as a Brownian motion time-changed by the quadratic variation
$[m]$, we have 
$t^{-(1/2)-\eps}m_t\to0$ a.s.~as $t\to\infty$.
Hence $\lim_{t \to \infty} (m_t / X_t) = 0$, a.s., and so from~\eqref{eq:x-projection} we obtain
$\lim_{t \to \infty} (\ell_t / X_t ) = 1$, a.s.
In particular $\ell_t\to\infty$ a.s. as $t\to\infty$.
Moreover, from~\eqref{eq:e1-projection} and the fact that $X_t \to \infty$, a.s., we have that
$\lim_{t \to \infty} \langle e_x , \phi (Z_t) \rangle = s_0$, a.s.
From~\eqref{eq:m-l-def},
for any $\eps>0$, there exists an a.s. finite random variable $\xi_\eps$,
such that 
$ |\ell_t-s_0 L_t|\leq \eps L_t + \xi_\eps$ a.s. for all $t\in\RP$.
Thus,
$L_t\to\infty$ and
$\ell_t / L_t \to s_0$, a.s., as $t\to\infty$ and we obtain~\eqref{eq:lln-local-time}.
\end{proof}

\subsection{Deterministic calculations and estimates}
\label{subsec:deterministic_proofs}

In this subsection we prove deterministic Lemmas~\ref{lem:g-derivatives},~\ref{lem:nu-properties} and~\ref{lem:b-B-bound}.

\begin{proof}[Proof of Lemma~\ref{lem:g-derivatives}]
Statement~\eqref{eq:partial-g} is direct from differentiation of~\eqref{eq:g-def}, and (i) and (ii) follow 
since $\sup_{y :\| y \|_d \leq b(x)} | g(x,y) - x | \leq |\gamma| b(x)$, 
and $\lim_{x \to \infty} b'(x) = 0$ implies that $\sup_{x \geq 1} | b' (x) | < \infty$ and
$\lim_{x \to \infty} x^{-1} b (x) = 0$ also, while $\| \nabla g \|_{d+1}$ is bounded on bounded subsets of~$\cD$, by assumption.
For $x>1$, differentiating~\eqref{eq:partial-g}, we obtain
\begin{align*}
\partial_x \partial_x g(x,y) & = \gamma \left( \frac{2b'(x)^2}{b(x)^3} - \frac{b''(x)}{b(x)^2} \right) \| y \|^2_d , \\
\partial_{y_i} \partial_{y_j} g(x,y) & =  \frac{2 \gamma}{b(x)} \1 { i = j}, \\
\partial_x \partial_{y_i} g(x,y) & = -2 \gamma \frac{b'(x)}{b(x)^2} y_i .
\end{align*}
Denoting $\Sigma (z) = ( \Sigma_{ij} (z) )_{0 \leq i,j \leq d}$ for $z = (x,y) \in \cD$,
where index $i$ corresponds to coordinate $x$ if $i=0$ and to $y_i$ if $1 \leq i \leq d$, it follows
that, for $x>1$, 
\begin{align*}  \Delta_{\Sigma} g(z) = 
\gamma \Sigma_{00} (z) \left( \frac{2b'(x)^2}{b(x)^3} - \frac{b''(x)}{b(x)^2} \right) \| y \|^2_d + \frac{2\gamma}{b(x)} \sum_{i=1}^d \Sigma_{ii} (z) - 4\frac{b'(x)}{b(x)^2}   \sum_{i=1}^d y_i \Sigma_{0i} (z).\end{align*}
Since,
 from~\eqref{eq:Sigma-limit}, $\sup_{y : \| y \|_d \leq b(x)} | \sum_{i=1}^d \Sigma_{ii} (x,y) - \sigma^2 | \to 0$ as $x \to \infty$,
we obtain
\begin{align*}
     \sup_{y : \| y \|_d \leq b(x)}  \left| \frac{1}{2} b(x) \Delta_\Sigma g(x,y)  - \gamma \sigma^2 \right| \leq C \| \Sigma (z) \|_{\rm op} \left[ b'(x) + b'(x)^2 + b(x) b''(x) \right] + o(1),
     \end{align*}
     for some constant~$C < \infty$. By assumption, this tends to $0$ as $x \to \infty$, giving~(iii).
\end{proof}

\begin{proof}[Proof of Lemma~\ref{lem:nu-properties}]
Suppose that $x > 1$, and consider $z = (x,y) \in \pcD$. Then $\| y \|_d = b(x) > 0$ and $\hat y = y / b(x) \in \Sp{d-1}$. 
Write $\phi( z ) = \langle \phi(z) , e_x \rangle e_x + \langle \phi(z) , e_{\hat y} \rangle e_{\hat y}$. 
By the expression for~$\nabla g(z)$ from~\eqref{eq:partial-g},  
we have $\partial_x g(x,y) = 1 -\gamma b'(x)$, $\nabla_y g(x,y) = 2 \gamma \hat y$, and
\begin{align*}
\nu (z) & =    \langle \phi(z) , e_x \rangle \partial_x g(z) 
+     \langle \phi(z) , e_{\hat y} \rangle {\hat y}^\tra\nabla_y g (z) 
=     \langle \phi(z) , e_x \rangle + 2 \gamma \langle \phi(z) , e_{\hat y} \rangle + o(1)   , \end{align*}
as $x \to \infty$, provided $b'(x) \to 0$, using the fact that $\| \phi(z) \|_{d+1}$ is bounded. 
The conclusion of the lemma now follows, since assumptions~\eqref{eq:e1-projection} and~\eqref{eq:z-projection} show that
\[ \lim_{x \to \infty} \sup_{y : \| y \|_d = b(x)} \left| \nu (x,y) - s_0 + 2 \gamma c_0 \right| = 0 . \qedhere \]
\end{proof}

\begin{proof}[Proof of Lemma~\ref{lem:b-B-bound}]
First we prove~\ref{lem:b-B-bound-i}. 
By definition of $\beta$ at~\eqref{eq:beta-def}, 
for any $\eps >0$, there exists $x_1 \in \RP$ such that
$(\beta+\eps) b(x) \geq x b'(x)$ for all $x \geq x_1$. Hence, for $x > x_1$,
\begin{align*}
B(x) = \int_0^x b(s) \ud s &   \geq \frac{1}{\beta+\eps} \int_{x_1}^x s b'(s) \ud s  \\
& = \frac{1}{\beta + \eps} \bigl[ s b(x) \bigr]_{x_1}^x - \frac{1}{\beta+\eps} \int_{x_1}^x b(s) \ud s .\end{align*}
It follows that, as $x \to \infty$,
\begin{equation}
    \label{eq:B-lower-bound}
 \left[ 1 + \frac{1}{\beta +\eps} \right] B(x) \geq \frac{x b(x)}{\beta+\eps} + O(1) .
 \end{equation}
Thus, for some $C < \infty$,
\begin{equation}
    \label{eq:b-square-bound}
x^2 b(x)^2 \leq C B(x)^2 + C, \text{ for all } x \in \RP.
\end{equation}
If $B(\infty) < \infty$, then~\eqref{eq:b-square-bound} gives $b(x) = O(1/x)$ as $x \to \infty$.
Suppose that $B(\infty) = \infty$.
Then, since $b(x) = O( x^{\beta + \eps} )$ for any $\eps >0$, we  
must have $\beta \geq -1$ (or else $B$ would be bounded). Let $\delta \in (1, 2 \wedge \frac{2}{1+\beta})$.
Then 
$B(x)^\delta= O( x^{2} )$ as $x \to \infty$.
Hence, by~\eqref{eq:b-square-bound},
there is a constant $C < \infty$ such that, for all $x \geq 1$, say, 
\[ b(x)^2 \leq C \frac{B(x)^2}{x^2} = C \frac{B(x)^\delta B(x)^{2-\delta} }{x^2} \leq C' B(x)^{2-\delta} , \]
for some $C' < \infty$. Since $b$ is bounded on compact intervals, part~\ref{lem:b-B-bound-i} follows.

For part~\ref{lem:b-B-bound-ii},
 we have that, for fixed $\omega \in \R$, for all $x$ sufficiently large
\begin{align*}
\bigl| B (x + \omega b(x) ) - B(x) \bigr| = \biggl| \int_x^{x+\omega b(x)} b (s ) \ud s\biggr| \leq | \omega | \cdot b(x) \cdot \sup_{x/2 \leq s \leq 2x} b (s) .
\end{align*}
Thus from~\eqref{eq:b-square-bound} we see that
$| B (x + \omega b(x) ) - B(x) | \leq C|\omega| (1+ B(x)^2 )/ x^2$ for all large enough~$x$,
which together with the fact that $B(x) = o(x^2)$ as $x \to \infty$ yields~\ref{lem:b-B-bound-ii}.
\end{proof}
 
\appendix
\section{Solutions, existence, and uniqueness}
\label{sec:construction}

This section defines formally
the terminology in Theorem~\ref{thm:existence} below
and then gives its proof.
The first step is to describe the function space
on which our (possibly explosive) solutions to~\eqref{eq:SDE-for-Z}
will live, then we proceed to define
the concept of a solution up to a predictable stopping time,
and discuss existence and uniqueness; 
we draw in part on the approach of~\cite[\S1.5]{ce}
for solution theory of potentially explosive SDEs.

Recall from~\eqref{eq:domain-def} the definition of $\cD \subseteq \R^{d+1}$,
which inherits the usual topology from $\R^{d+1}$. 
Let  $\barcD := \cD \cup \{ \partial \}$
denote the one-point (Alexandroff) compactification of $\cD$
whose open sets are the open sets in $\cD$ together with all $U = (\cD \setminus B ) \cup \{ \partial \}$ over compact $B \in \cD$.
The adjoined state $\partial$ will accommodate explosion.
Since $\cD$ is open in $\barcD$, $\{ \partial \}$ is closed. 
For $z = (x,y) \in \cD$, let $P_1 (x,y) := x \in \RP$ denote projection
onto the first coordinate, and extend to $P_1 : \barcD \to [0,\infty]$ by setting $P_1 (\partial ) = \infty$. Then $z_n \in \barcD$ has $z_n \to \partial$
if and only if $P_1 (z_n) \to \infty$,
since $z_n \to \partial$ if and only if for every compact $B$ it is the case that $z_n \in ( \cD \setminus B ) \cup \{ \partial \}$ for all $n$ sufficiently large.
Thus $P_1 : \barcD \to [0,\infty]$ is continuous, where $[0,\infty] := \RP \cup \{ \infty \}$ also has the topology of the one-point compactification.

Let $C := C ( \RP, \barcD)$ denote the set of continuous functions $f : \RP \to \barcD$. By choice of topology on $\barcD$, any $f \in C$ has the properties:
\begin{itemize}
    \item[(i)] If $t \in \RP$ is such that $f(t) \in \cD$, then $\lim_{s \to t} f(s) = f(t)$.
    \item[(ii)] If $t \in \RP$ is such that $f(t) = \partial$, then $\lim_{s \to t} P_1 (f(s)) = \infty$.
\end{itemize} 

With the usual convention that $\inf \emptyset := \infty$, 
define $\cE : C \to [0,\infty]$  by 
\begin{equation}
    \label{eq:E-def}
\cE (f) := \inf \{ t \in \RP : f(t) = \partial \}.
\end{equation}
By continuity of $f$, $f (\cE(f) ) = \partial$ if $\cE(f) < \infty$,
so property~(ii) above shows that $\lim_{t \uparrow \cE(f)} P_1 (f(t)) = \infty$.
For $f \in C$ define $S_x : C \to [0,\infty]$ by
\begin{equation}
    \label{eq:Sx-def}
 S_x (f) := \inf \left\{ t \in \RP : P_1 (f(t)) \in  [x, \infty]  \right\} .
 \end{equation}
 We claim that
 \begin{equation}
     \label{eq:Sx-limit}
     \cE (f) = \lim_{x \to \infty} S_x (f), \text{ for every } f \in C.
 \end{equation}
 Indeed, $S_{x'} \geq S_x$ for all $x' \geq x$, so $S (f) := \lim_{x \to \infty} S_x(f)$ exists in $[0,\infty]$. Clearly $S_x (f) \leq \cE(f)$, so $S (f) \leq \cE(f)$.
If $S(f) < \cE(f)$ then,
$S(f) + 2\eps < \cE (f) $ for some $\eps >0$,
and $M := \sup_{0 \leq s \leq S(f) +\eps} P_1 (f(s)) < \infty$,
by uniform continuity of $s \mapsto P_1 (f(s))$ on compact intervals before $\cE(f)$. Then for $x > M$ we would have $S_x (f) \geq S(f) +\eps$ and hence $S(f) \geq S(f) +\eps$, which is a contradiction;
this establishes~\eqref{eq:Sx-limit}.

Endow $C$ with the compact-open topology, 
that is, the topology generated by $T(K,U) = \{ f \in C : f(K) \subseteq U\}$
over compact $K \subseteq \RP$  and open $U \subseteq \barcD$. 
Suppose that $f_n \to f$.
If $U \subseteq \cD$ is open, then $f[0,t] \subseteq U$ implies that $t<\cE(f)$,
and so the requirement that $f_n[0,t] \in U$ for all $n$ sufficiently large
means that $\liminf_{n \to \infty} \cE(f_n) > t$ and $f_n$ converges to $f$ uniformly over $[0,t]$.
It follows that $f_n \to f$ implies  that
\begin{align}
    \label{eq:uniform-on-compacts}
\sup_{0 \leq s \leq t} \| f_n(s) - f(s) \|_{d+1} & \to 0, \text{ for all } t < \cE (f); \\
\label{eq:continuity-of-Sx}
\lim_{n \to \infty} S_x (f_n) & = S_x (f) ; \\
 \label{eq:liminf-of-explosion-times}
\liminf_{n \to \infty} \cE (f_n) & \geq \cE(f) .
\end{align}
Here~\eqref{eq:continuity-of-Sx} follows from~\eqref{eq:uniform-on-compacts}. Indeed, suppose that~\eqref{eq:uniform-on-compacts} holds, and $S_x (f) = S \in [0,\infty]$.
For any $t < S$, for all $n$~sufficiently large, 
$\sup_{0 \leq s \leq t} P_1 ( f_n(s) ) < x$, so $S_x (f_n) > t$,
and $\liminf_{n \to \infty} S_x (f_n) \geq S_x (f)$.
If $S_x(f) = \infty$ then this is a limit; otherwise, a similar argument
in the other direction shows that $\limsup_{n \to \infty} S_x (f_n) \leq S_x (f)$.
In any case, we obtain~\eqref{eq:continuity-of-Sx}. 
Since $\cE(f_n) \geq S_x(f_n)$, this implies~\eqref{eq:liminf-of-explosion-times}.

Let $\barC := \barC (\RP, \barcD)$ denote the set of $f \in C$
satisfying $f (t) = \partial$ for all $t \geq \cE (f)$,
endowed with the compact-open topology inherited from $C$. 
We will show that solutions to~\eqref{eq:SDE-for-Z}
can be interpreted as trajectories~$f \in \barC$ with potential explosion time~$\cE(f)$.

We will talk about solutions of~\eqref{eq:SDE-for-Z} in the sense of \emph{solutions up to a predictable stopping time}: cf.~the discussion in~\cite[\S1.5]{ce}. To describe this, we need some more notation and definitions.  
Define for $n \in \ZP$ the stopping time $T_n$ given by
\begin{equation}
    \label{eq:predicting-sequence}
T_n (f) := n \wedge S_n(f),  \text{ for } f \in C .
\end{equation}
 Then $T_n$ is a 
 \emph{predicting sequence} for $\cE$,
meaning that it has the following properties:
\begin{itemize}
    \item[(i)] $T_n(f) \leq T_{n+1}(f)$;
    \item[(ii)] $T_n(f) \leq \cE(f)$, and $T_n(f) < \cE(f)$ if $\cE(f) > 0$;
    \item[(iii)] $\lim_{n \to \infty} T_n(f) = \cE(f)$.
\end{itemize}
We say that $\cE(f)$ is a \emph{predictable stopping time} with predicting sequence $T_n$.
Property (i) follows since $S_{n+1} (f) \geq S_n (f)$, and (iii) since $\lim_{n \to \infty} S_n (f) = \cE(f)$. Suppose that $0 < \cE(f) < \infty$, then, 
since $P_1(f(t)) \to \infty$ as $t \uparrow \cE(f)$, 
for any $x \in \RP$ we can find $t < \cE(f)$ such that $P_1( f(t) ) \in [x,\infty)$;
hence $S_x (f) < \cE (f) < \infty$ for all $x \in \RP$.
On the other hand, if $\cE(f) =\infty$ then $n < \cE(f)$ for all $n$. This establishes~(ii).

Say that~\eqref{eq:SDE-for-Z} has a \emph{strong solution up to time $\cE^-$} if 
for every probability space $(\Omega, \cF, \Pr)$ with a complete, right-continuous filtration $(\cF_t, t \in \RP)$ and an adapted $(d+1)$-dimensional Brownian motion $W = (W_t, t \in \RP)$,
and every~$z \in \cD$,
there exists a pair $(Z,L)$ 
with $\Pr ( Z \in \barC ) = 1$
and, 
for every $r \in \RP$, $(Z_{t \wedge \sigma_r}, t \in \RP)$ is an adapted semimartingale on $\cD$ and $(L_{t \wedge \sigma_r}, t \in \RP)$ is a bounded variation process on $\RP$ for which
\begin{equation}
    \label{eq:strong-solution-stopped}
\begin{split}
 Z_{t \wedge \sigma_r} & = z + \int_0^{t \wedge \sigma_r} \Sigma^{1/2} ( Z_s) \ud W_s + \int_0^{t \wedge \sigma_r} \phi ( Z_s) \ud L_s , \\
& \qquad\qquad \text{ and } L_t = \int_0^{t \wedge \sigma_r} \1 { Z_s \in \pcD } \ud L_s ,
\end{split}
\end{equation}
where $\sigma_r := \inf \{ t \in \RP : P_1 (Z_t) \geq r \}$,
i.e., $\sigma_r = S_r (Z)$, a.s., in the notation at~\eqref{eq:Sx-def}. We define $\taue := \lim_{r \to \infty} \sigma_r$, so that~$\taue = \cE (Z)$, a.s., with the notation at~\eqref{eq:E-def}. Thus if we have a strong solution up to time $\cE^-$, we have the triple $(Z,L,\taue)$ as described.

Moreover, we say \emph{pathwise uniqueness} holds if
for every probability space $(\Omega, \cF, \Pr)$ equipped with a complete, right-continuous filtration $(\cF_t, t \in \RP)$ and an adapted $(d+1)$-dimensional Brownian motion $W = (W_t, t \in \RP)$,
if there exist two strong solutions up to time $\cE^-$ on $(\Omega, \cF, \Pr)$ with respect to $W$,
denoted by $(Z,L,\taue)$ and $(Z',L',\taue')$, say, then~$\Pr (Z_0 = Z_0') = 1$ implies that $\Pr ( Z = Z', L = L', \taue = \taue') = 1$.

\begin{theorem}
\label{thm:existence}
Suppose that~\eqref{ass:domain1}, \eqref{ass:variance}, and~\eqref{ass:vector-field} hold.
Then there exists a strong
solution 
$(Z,L,\taue)$ satisfying~\eqref{eq:SDE-for-Z}, and there is pathwise uniqueness. In particular,~\eqref{eq:SDE-for-Z} defines a continuous strong Markov process $Z$ over time interval $[0,\taue)$ and
\begin{equation*}
\lim_{t \uparrow \taue} \|Z_t\| 
= \lim_{t \uparrow \taue} L_t = \infty, \text{ on } \{ \taue < \infty\} .\end{equation*}
\end{theorem}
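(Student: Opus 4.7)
The plan is to proceed by localization: solve on bounded truncations of $\cD$ using the classical theory, glue together using pathwise uniqueness, and finally analyse the behaviour of the trajectory at the explosion time.

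For each $n\in\N$, I would form a bounded $C^2$ truncation $\cD^n$ of $\cD$ by intersecting $\cD$ with $\{x\leq n+1\}$ and smoothing the resulting corner (possible since $\cD$ is itself $C^2$ by Lemma~\ref{lem:C2-domain}), and extend $\phi$ to the new portion of boundary as any smooth inward-pointing vector field still satisfying~\eqref{eq:orthogonal-component-strict} uniformly. Then~\eqref{ass:variance} together with the extended~\eqref{ass:vector-field} places us in the framework of Lions--Sznitman~\cite{ls}, yielding a pathwise-unique, non-exploding strong solution $(Z^{(n)},L^{(n)})$ on $\cD^n$, defined for all $t\in\RP$. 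Pathwise uniqueness applied inside $\cD^{n+1}$ gives the consistency $Z^{(n+1)}_{t\wedge\sigma_n}=Z^{(n)}_{t\wedge\sigma_n}$ almost surely, where $\sigma_n:=\inf\{t:P_1(Z^{(n)}_t)\geq n\}$, hence $\sigma_n\leq\sigma_{n+1}$. Setting $\taue:=\lim_{n\to\infty}\sigma_n$ and $Z_t:=Z^{(n)}_t$, $L_t:=L^{(n)}_t$ for $t<\sigma_n$, with $Z_t:=\partial$ for $t\geq\taue$, produces the required triple; the stopped SDE~\eqref{eq:strong-solution-stopped} holds for every $r$, pathwise uniqueness is inherited from pathwise uniqueness at every level, and the strong Markov property on $[0,\taue)$ is inherited from each $Z^{(n)}$ via the same consistency.

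The main obstacle is to show that on $\{\taue<\infty\}$ one has both $\|Z_t\|_{d+1}\to\infty$ and $L_t\to\infty$ as $t\uparrow\taue$, not merely along the subsequence $\sigma_n$: this is precisely where extra work beyond~\cite{ls,sv} is required, since a priori $\taue<\infty$ could arise from oscillatory behaviour rather than genuine escape. The $\limsup$ follows immediately from $\sigma_n\uparrow\taue$ and $X_{\sigma_n}=n$; for the matching $\liminf$ I would argue by contradiction. If $\liminf_{t\uparrow\taue} X_t\leq M<\infty$ on a positive-probability event, then by continuity $X$ performs infinitely many upcrossings of $[M+1,M+2]$ within the finite interval $[0,\taue)$, during all of which the process is confined to the bounded domain $\cD^{M+2}$. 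Standard bounds for reflected diffusions with Lipschitz oblique reflection in bounded $C^2$ domains (cf.~\cite{ls}) give $\Exp[(L^{(M+2)}_t)^2]=O(t)$; combined with the semimartingale decomposition $X_t=X_0+m_t+\int_0^t\langle e_x,\phi(Z_s)\rangle\,\ud L_s$ and boundedness of $\phi$ and $\Sigma^{1/2}$, this yields $\Exp[|X_t-X_0|^2]=O(t)$, so by Cauchy--Schwarz each upcrossing of $[M+1,M+2]$ has expected duration uniformly bounded below by a positive constant, and summation over infinitely many excursions contradicts $\taue<\infty$.

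Once $X_t\to\infty$ on $\{\taue<\infty\}$ is established, the divergence $L_t\to\infty$ follows from the same semimartingale decomposition of $X_t$: the martingale part $m$ satisfies $[m]_t\leq C\taue<\infty$ almost surely, hence $m_t$ stays bounded on $[0,\taue)$, and then boundedness of $\langle e_x,\phi\rangle$ from~\eqref{ass:vector-field} forces $L_t\to\infty$.
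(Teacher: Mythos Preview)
Your overall strategy is the same as the paper's: truncate to bounded $C^2$ domains, apply Lions--Sznitman, glue via pathwise uniqueness, and then rule out oscillation at $\taue$ by showing that each crossing of a fixed level takes a uniformly positive amount of time. The final step deducing $L_t\to\infty$ from convergence of the martingale part $m$ is also the paper's argument.

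There is one genuine gap in your oscillation argument. The implication ``expected duration uniformly bounded below, hence summing infinitely many excursions contradicts $\taue<\infty$'' does not follow: a uniform lower bound on $\Exp[D_k\mid\cF_{T_k}]$ only forces $\Exp\taue=\infty$, which is perfectly compatible with $\Pr(\taue<\infty)>0$. What is required is a uniform \emph{probability} bound of the form $\Pr(D_k>t_0\mid\cF_{T_k})\geq c>0$; then L\'evy's conditional Borel--Cantelli gives $D_k>t_0$ infinitely often on the oscillation event, so $\sum_k D_k=\infty$, contradicting $\taue<\infty$. Your ingredients do supply this: from $\Exp[(L_t)^2]=O(t)$ and the semimartingale decomposition one gets $\Exp\bigl[\sup_{s\leq t}|X_s-X_0|^2\bigr]=O(t)$ via Doob's inequality for $m$ and monotonicity of $L$, and Chebyshev then gives the needed tail bound. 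The paper carries out exactly this Borel--Cantelli argument; it derives the probability bound $\inf_{z\in\cD_r}\Pr_z(\sigma_{2r}>c_r)\geq 1/3$ directly, via the submartingale maximal inequality applied to $X'_t=X_0+m_t+\int_0^t\langle e_x,\phi\rangle^+\ud L_s$ and a self-contained It\^o-formula estimate $\Exp[L_{t\wedge\sigma_{2r}}]\to 0$ as $t\to 0$ (applying It\^o to a $C^2$ function $N$ with $\nabla N=\phi$ on $\pcD$), rather than appealing to unspecified ``standard bounds'' on $\Exp[L_t^2]$.
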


\begin{remark}
The idea of the proof of 
Theorem~\ref{thm:existence} 
is to apply 
existence and uniqueness results from~\cite{ls} for diffusions with oblique reflections on bounded domains to an increasing sequence of bounded domains. 
The main technical contribution of the proof of Theorem~\ref{thm:existence}
is establishing 
$\lim_{t \uparrow \taue} \|Z_t\| =\infty$, which is required for
$\Pr ( Z \in \barC ) = 1$
in the definition of a solution. 
\end{remark}

\begin{proof}[Proof of Theorem~\ref{thm:existence}.]
Let $(\Omega, \cF, \Pr)$ be a probability space accommodating a
$(d+1)$-dimensional Brownian motion $W$
adapted to a complete, right-continuous filtration $(\cF_t, t \geq 0 )$. 
We show how to construct a pair $(Z,L)$ which satisfy~\eqref{eq:strong-solution-stopped} for every~$r$. To do so, we approximate $\cD$ by an increasing sequence of bounded domains, on which we can use the results of~\cite[\S 4]{ls}, and then take a limit. 

The assumption~\eqref{ass:domain1} 
implies that $\cD$ is a $C^2$ domain, by Lemma~\ref{lem:C2-domain}. 
Let $(\cD^{(u)}, u > 0)$ be a sequence of bounded $C^2$ domains in $\R^{d+1}$, and define $\cD^{(u)}_r = \{ ( x, y) \in \cD^{(u)} : x \leq r \}$
and $\cD_r = \{ ( x, y) \in \cD : x \leq r \}$.
Suppose that for all $u \geq r > 0$, $\cD^{(u)}_r =  \cD_r$ and $\cD_r \cap \pcD^{(u)} = \cD_r \cap \pcD$. Suppose also that
 $\phi^{(u)} : \pcD^{(u)} \to \R^{d+1}$ is such that $\phi^{(u)} (z) = \phi (z)$
for all $z \in \cD_u$, that $\phi^{(u)}$ is $C^2$,
and that the analogue of~\eqref{eq:orthogonal-component-strict} holds.

Fix $z = (x, y) \in \cD$, and take $r \in (x, \infty)$.
Since $\Sigma$ is Lipschitz and uniformly elliptic, the symmetric
square root $\Sigma^{1/2}$ is also Lipschitz~\cite[p.~131]{sv}. Then, since $\cD^{(r)}$ is $C^2$ and bounded,
 $\Sigma^{1/2}$ is Lipschitz, 
and $\phi^{(r)}$ satisfies the conditions described above,
all the conditions of Theorem~4.3 of~\cite{ls} are satisfied
for domain $\cD^{(r)}$ and vector field $\phi^{(r)}$. That result then implies that
there is an $\cF_t$-adapted continuous 
semimartingale $Z^{(r)}$ with $Z_t^{(r)} \in \cD^{(r)}$ for all $t \geq 0$, and a bounded variation process $L^{(r)}$, such that, for all $t \in \RP$,
\begin{equation}
\label{eq:SDE-r-global}
\begin{split}
 Z^{(r)}_{t} & = z + \int_0^{t} \Sigma^{1/2} ( Z^{(r)}_s) \ud W_s + \int_0^{t} \phi^{(r)} ( Z^{(r)}_s) \ud L^{(r)}_s , \\
& \qquad\qquad \text{ and } L^{(r)}_{t} = \int_0^{t} \1 { Z^{(r)}_s \in \pcD^{(r)}  } \ud L^{(r)}_s .  
\end{split}
\end{equation}
Moreover, the results of~\cite{ls} show that the pair $(Z^{(r)},L^{(r)})$ is essentially unique, in that any other pair for which~\eqref{eq:SDE-r-global} holds must be a.s.~identical.

Define $\sigma^{(r)}_w := \inf \{ t \geq 0 : Z^{(r)} (t) \geq w \}$. Then
stopping the process at time $\sigma^{(r)}_r$,
from~\eqref{eq:SDE-r-global} and using the facts that
$\phi^{(r)} ( Z^{(r)}_{t \wedge \sigma^{(r)}_r} ) = \phi  ( Z^{(r)}_{t \wedge \sigma^{(r)}_r} ) $ and 
$Z^{(r)}_{t \wedge \sigma^{(r)}_r} \in \cD_r$, we have 
\begin{equation}
\label{eq:SDE-r}
\begin{split}
 Z^{(r)}_{t\wedge \sigma^{(r)}_r} & = z + \int_0^{t\wedge \sigma^{(r)}_r} \Sigma^{1/2} ( Z^{(r)}_s) \ud W_s + \int_0^{t\wedge \sigma^{(r)}_r} \phi ( Z^{(r)}_s) \ud L^{(r)}_s , \\
& \qquad\qquad \text{ and } L^{(r)}_{t\wedge \sigma^{(r)}_r} = \int_0^{t\wedge \sigma^{(r)}_r} \1 { Z^{(r)}_s \in  \pcD   } \ud L^{(r)}_s .
\end{split}
\end{equation}
Note that uniqueness of $(Z^{(r)},L^{(r)})$ in~\eqref{eq:SDE-r-global}
implies uniqueness of $(Z^{(r)},L^{(r)},\sigma^{(r)}_r)$ in~\eqref{eq:SDE-r}. 
On the same probability space,  
 we can for $u > r$ define $Z^{(u)}$ such that, 
\[
\begin{split}
 Z^{(u)}_{t\wedge \sigma^{(u)}_u} & = z + \int_0^{t\wedge \sigma^{(u)}_u} \Sigma^{1/2} ( Z^{(u)}_s) \ud W_s + \int_0^{t\wedge \sigma^{(u)}_u} \phi ( Z^{(u)}_s) \ud L^{(u)}_s , \\
& \qquad\qquad \text{ and } L^{(u)}_{t\wedge \sigma^{(u)}_r} = \int_0^{t\wedge \sigma^{(u)}_u} \1 { Z^{(u)}_s \in  \pcD   } \ud L^{(u)}_s .
\end{split}
\]
In particular, since $\sigma^{(u)}_r \leq \sigma^{(u)}_u$,
\[
\begin{split}
 Z^{(u)}_{t\wedge \sigma^{(u)}_r} & = z + \int_0^{t\wedge \sigma^{(u)}_r} \Sigma^{1/2} ( Z^{(u)}_s) \ud W_s + \int_0^{t\wedge \sigma^{(u)}_r} \phi ( Z^{(u)}_s) \ud L^{(u)}_s , \\
& \qquad\qquad \text{ and } L^{(u)}_{t\wedge \sigma^{(u)}_r} = \int_0^{t\wedge \sigma^{(u)}_r} \1 { Z^{(u)}_s \in   \pcD   } \ud L^{(u)}_s .
\end{split}
\]
Hence $(Z^{(u)},L^{(u)},\sigma^{(u)}_r)$ solves~\eqref{eq:SDE-r} and so,
by uniqueness, we have $\sigma^{(u)}_r   = \sigma^{(r)}_r$ for all $u \geq r$,
and so if we write $\sigma_r := \lim_{u \to \infty} \sigma^{(u)}_r$, we have
\[ \text{ for all } u \geq r, ~\sigma_r = \sigma^{(r)}_r = \sigma^{(u)}_r, \text{ and }  (Z^{(u)}_{t \wedge \sigma_r} , L^{(u)}_{t \wedge \sigma_r } ) = (Z^{(r)}_{t \wedge \sigma_r} , L^{(r)}_{t \wedge \sigma_r } )  .\]
Since $\sigma_r$ is the hitting time of a closed set, it is a stopping time for $(\cF_t)_{t \geq 0}$. 
On the same probability space we then define $\taue = \lim_{r \to \infty} \sigma_r$, also a stopping time for $(\cF_t)_{t \geq 0}$~\cite[p.~7]{ks}.

On the same probability space,
we may now define
\[ Z_t = \begin{cases} \lim_{r \to \infty} Z^{(r)}_{t \wedge \sigma_r} &\text{if } t < \taue , \\
\partial & \text{if } t \geq \taue , \end{cases} \]
and set $L_t = \lim_{r \to \infty} L^{(r)}_{t \wedge \sigma_r}$ for $t < \taue$. 
Note that for every $t < \taue$, $t < \sigma_r$ for all $r \geq r(t)$ sufficiently large, so $Z_t = Z^{(u)}_t$
and $L_t = L^{(u)}_t$ for all $u \geq r(t)$, i.e., the limits are eventually constant. Moreover, $Z_{t \wedge \sigma_r} = Z^{(r)}_{t \wedge \sigma_r}$ and 
$L_{t \wedge \sigma_r} = L^{(r)}_{t \wedge \sigma_r}$ 
so,
by~\eqref{eq:SDE-r}, for any $r \in \RP$,
\[
\begin{split}
 Z_{t\wedge \sigma_r} & = z + \int_0^{t\wedge \sigma_r} \Sigma^{1/2} ( Z_s) \ud W_s + \int_0^{t\wedge \sigma_r} \phi ( Z_s) \ud L_s , \\
& \qquad\qquad \text{ and } L_{t\wedge \sigma_r} = \int_0^{t\wedge \sigma_r} \1 { Z_s \in  \pcD   } \ud L_s .
\end{split}
\]
Thus we have shown that $(Z,L)$ satisfy~\eqref{eq:strong-solution-stopped}.

We have defined $L$ such that $L_t$ is nondecreasing for $t < \taue$, so we complete the definition by setting, if $\taue < \infty$, 
$L_\taue = \lim_{t \to \taue} L_t$ and $L_t = L_\taue$ for all $t \geq \taue$.
Recall the definition of $m$ and $\ell$
from~\eqref{eq:m-l-def}, and from~\eqref{eq:x-projection}
that
$X_{t} 
= \langle e_x, z \rangle + m_{t} + \ell_{t}$. 
The local martingale~$(m_{t \wedge \sigma_r})_{t \geq 0}$
has $\Exp ( [ m ]_{t \wedge \sigma_r} ) \leq Ct$ for all $t, r \in \RP$,
so  is a martingale.
Set 
$[m]_{\taue} := \lim_{r \to \infty} [m]_{\sigma_r} \in [0,\infty]$ and
$T_n := \inf \{ t \in \RP : [ m ]_{t \wedge \taue} \geq n \}$, for $n \in \N$.
Then $\sup_{r \geq 0}\Exp  [ m^2_{\sigma_r \wedge T_n} ] < \infty$,
and hence optional stopping shows that $(m_{\sigma_k \wedge T_n} )_{k \in \N}$
is a martingale uniformly bounded in~$L^2$. Hence $Q_{n,\infty} = \lim_{k \to \infty} m_{\sigma_k \wedge T_n}$
exists and is finite, for each~$n$. On $\{ \taue < \infty \}$, we have $T_{n_0} = \infty$
for some a.s.~finite $n_0$, and hence $\lim_{k \to \infty} m_{\sigma_k} = Q_{n_0, \infty}=: m_\taue$, say, is
a.s.~finite. 

Recall that $\cD_r = \{ (x,y) \in \cD : x \leq r \}$.
We next claim that for every $ r \in (0,\infty)$, there exists a constant $c_r >0$ for which
\begin{equation}
    \label{eq:passage-time-lower-bound}
\inf_{z \in \cD_r} \Pr_z ( \sigma_{2r} > c_r ) \geq \frac{1}{3}.
\end{equation}
We now prove~\eqref{eq:passage-time-lower-bound}. 
For $0 \leq t < \taue$, define $X'_t := \langle e_x, z \rangle + m_{t} + \ell^+_{t}$,
where
\[ \ell^+_t := \int_0^t \langle e_x, \phi(Z_s) \rangle^+ \ud L_s , \text{ for } 0 \leq t < \taue.\]
Then $0 \leq X_t \leq X_t'$ for all $0 \leq t < \taue$ and, for any $r >0$, $(X_{t \wedge \sigma_r})_{t \in \RP}$
is a non-negative submartingale. Hence, for any $t \in \RP$,
\begin{align}
\nonumber
    \Pr ( \sigma_{2r} \leq t )   = \Pr \left( \sup_{0\leq s \leq t \wedge \sigma_{2r} } X_s \geq 2r \right) 
   & \leq \Pr \left( \sup_{0\leq s \leq t \wedge \sigma_{2r} } X'_s \geq 2r \right) \\
    \nonumber
   & \leq \frac{1}{2r} \Exp  X'_{t \wedge \sigma_{2r}}  \\
   & \leq \frac{1}{2r} \left( \langle e_x , z \rangle + C \Exp   L_{t \wedge \sigma_{2r}}   \right),
   \label{eq:local_time_bound_exit}
    \end{align}
by the maximal inequality for non-negative submartingales~(e.g.~Thm.~3.8(i) of~\cite[p.~13]{ks}),
and the fact that $\ell_{t \wedge \sigma_{2r}}^+ \leq C L_{t \wedge \sigma_{2r}}$, where $C := \sup_{z \in \pcD} \| \phi(z) \|_{d+1} < \infty$ by~\eqref{ass:vector-field}. To bound $\Exp   L_{t \wedge \sigma_{2r}}$, consider $N : \R^{d+1} \to \R$
with the property that $\nabla N(z) = \phi_x (u)$
for every $z = (x, ub(x)) \in \pcD$, for which $N$ is $C^2$, and so all its partial derivatives of up to second order are
bounded on compact sets. For $U_t := N (Z_t)$, $0 \leq t < \taue$,  It\^o's formula implies
\[  U_{t \wedge \sigma_{2r}} - U_0 \geq \tilde m_{t } +  L_{t \wedge \sigma_{2r}} - C_r t,\]
where $(\tilde m_{t } )_{t \in \RP}$ is  a martingale and the constant $C_r<\infty$ exists since $\Sigma$ and the second derivatives of $N$ are bounded on $\cD_{2r}$.  Hence
$\Exp L_{t \wedge \sigma_{2r}} \leq  C_r t + \Exp |U_{t \wedge \sigma_{2r}} - U_0 | $. Moreover, $\sup_{t \in \RP} U_{t \wedge \sigma_{2r}}$ is bounded by a constant, and so by bounded convergence $\lim_{t \to 0} \Exp L_{t \wedge \sigma_{2r}} = 0$
for every fixed $r \in (0,\infty)$. Thus, by~\eqref{eq:local_time_bound_exit}, we can choose $t = t_0 >0$ small enough (depending on $r$) such that
$\inf_{z \in \cD_r} \Pr_z ( \sigma_{2r} \geq t_0 ) \geq 1/3$, since $z = (x,y)$ satisfies $x \leq r$. This completes the proof of~\eqref{eq:passage-time-lower-bound}.

From~\eqref{eq:passage-time-lower-bound} it follows that
$\lim_{t \uparrow \taue} X_t = \infty$ on the event $\{\taue < \infty\}$. 
To see this,
define stopping times
$s_0:=0$, and, for $k \in \N$, $t_k := \inf \{ t \geq s_{k-1} : X_t \leq 2r \}$
and $s_k := \inf \{ t \geq t_k : X_t \geq 4r\}$.
On the event $\{\taue < \infty, \, \liminf_{t \uparrow \taue} X_t \leq r \}$,
we have $s_0 < t_1 < s_1 < \cdots < \taue < \infty$. However,
by~\eqref{eq:passage-time-lower-bound} and
the strong Markov property, $\Pr ( s_k - t_k \geq c_{2r} \mid \cF_{t_k} ) \geq 1/3$ on $\{ t_k < \infty \}$.
From L\'evy's extension of the Borel--Cantelli lemma (Cor.~9.21 in~\cite[p.~197]{kall}), it follows that
$s_k - t_k \geq c_{2r}$ occurs infinitely often, a.s., on the event $\{\taue < \infty, \, \liminf_{t \uparrow \taue} X_t \leq r \}$.
But then $\liminf_{k \to \infty} s_k = \sum_{\ell=1}^{\infty} (s_{\ell}-s_{\ell-1}) \geq \sum_{\ell=1}^{\infty} (s_{\ell} - t_\ell ) = \infty$, contradicting the fact that $\limsup_{k \to \infty} s_k \leq \taue < \infty$. Thus $\lim_{t \uparrow \taue} X_t = \infty$ on the event $\{\taue < \infty\}$.

Since $X_{t} 
= \langle e_x, z \rangle + m_{t} + \ell_{t}$, and $m_t$ has $\lim_{t \uparrow \taue} m_t < \infty$ on $\{\taue < \infty\}$,
as argued above, it follows that 
$\lim_{t \uparrow \taue} \ell_t = \infty$ on $\{ \taue < \infty \}$ as well. 
Also,
 $\ell_t \leq K L_t$, where  $K := \sup_{z \in \pcD} \| \phi (z)\|_{d+1} < \infty$ by~\eqref{ass:vector-field}.
Hence
we conclude that
\begin{equation}
   \label{eq:no-oscillations}
\lim_{t \uparrow \taue} X_t =
\lim_{t \uparrow \taue} \ell_t
= \lim_{t \uparrow \taue} L_t = \infty, \text{ on } \{ \taue < \infty\} .\end{equation}
 In particular, this verifies the claim that
$\Pr ( Z \in \barC ) = 1$.

Finally, we turn to uniqueness.
As already described, $Z^{(u)}_{t \wedge \sigma_r} = Z^{(r)}_{t \wedge \sigma_r}$, and hence
$Z_{t \wedge \sigma_r} = \lim_{u \to \infty} Z^{(u)}_{t \wedge \sigma_r} = Z^{(r)}_{t \wedge \sigma_r}$.
If $(Z',L')$ is another strong solution of the SDE~\eqref{eq:SDE-for-Z} up to $\cE^-$, 
then, as already argued, $(Z',L')$ solves the SDE~\eqref{eq:SDE-r} over time interval $[0,\sigma_r]$. But uniqueness for~\eqref{eq:SDE-r}
means that $Z'_{t \wedge \sigma_r} = Z^{(r)}_{t \wedge \sigma_r} = Z_{t \wedge \sigma_r}$. This is true for all $r > x$,
so $Z'$ coincides with $Z$, establishing uniqueness.
\end{proof}
 
 \section*{Acknowledgements}
 
 AM was supported by EPSRC grants EP/V009478/1 and EP/P003818/1, the Turing Fellowship funded by the Programme on Data-Centric Engineering of Lloyd’s Register Foundation, and 
by The Alan Turing Institute under the EPSRC grant EP/N510129/1.

\end{document}